\newtheorem{prop}{Proposition}[section]
\newtheorem{define}[prop]{Definition}
\newtheorem{lemma}[prop]{Lemma}
\newtheorem{theo}[prop]{Theorem}
\newtheorem{coro}[prop]{Corollary}
\numberwithin{equation}{section}
\theoremstyle{remark}
\newtheorem{rmq}{Remark}
\newcommand{\wE}{\widetilde{E}}
\newcommand{\dun}{\partial_1}
\newcommand{\n}{\nabla}
\newcommand{\R}{\mathbb{R}}
\newcommand{\N}{\mathbb{N}}
\newcommand{\Z}{\mathbb{Z}}
\title{Small energy traveling waves for the Euler-Korteweg system}
\author{Corentin Audiard
\footnote{Sorbonne Universit\'es, UPMC Univ Paris 06, UMR 7598, Laboratoire Jacques-Louis Lions, 
F-75005, Paris, France }
\footnote{CNRS, UMR 7598, Laboratoire Jacques-Louis Lions, F-75005, Paris, France}}
\begin{document}
\maketitle

\begin{abstract}
We investigate the existence and properties of traveling waves for the Euler-Korteweg system 
with general capillarity and pressure. 
Our main result is the existence in dimension two of waves with arbitrarily small energy. 
They are obtained as minimizers of a modified energy with fixed momentum. The 
proof follows various ideas developed for the Gross-Pitaevskii equation (and more generally 
nonlinear Schr\"odinger equations with non zero limit at infinity). Even in the Schr\"odinger
case, the fact that we work with the hydrodynamical variables and a general pressure law
both brings new difficulties and some simplifications.
Independently, in dimension one we prove that the criterion for the linear instability of traveling waves 
from \cite{Benzoni5} actually implies nonlinear instability.
\end{abstract}

\renewcommand{\abstractname}{R\'esum\'e}
\begin{abstract}
On \'etudie les ondes progressives des \'equations d'Euler-Korteweg pour des lois de 
capillarit\'e et pression g\'en\'erales.
Le principal r\'esultat est l'existence en dimension $2$ d'ondes d'\'energie arbitrairemet petite.
Elles sont obtenues comme minimiseurs d'une \'energie modifi\'ee
\`a moment fix\'e. La preuve suit plusieurs id\'ees d\'evelopp\'ees pour les \'equations de 
Schr\"odinger non lin\'eaires avec limite non nulle \`a l'infini. M\^eme dans ces cas, 
le fait de travailler en variables hydrodynamiques apporte de nouvelles difficult\'es, mais aussi 
quelques simplifications.
Ind\'ependamment, on montre en dimension un que le crit\`ere d'instabilit\'e lin\'eaire des ondes 
progressives de \cite{Benzoni5} implique en fait l'instabilit\'e non lin\'eaire.
\end{abstract}

\tableofcontents
\section{Introduction}
The Euler-Korteweg system is a modification of the usual Euler equations for compressible fluids 
that includes capillary effects. 
Mathematically it reads in dimension $d$ as the following system 
of $d+1$ equations combining the conservation of mass and momentum
\begin{equation}\label{EK}
\left\{
\begin{array}{ll}
\partial_t\rho+\text{div}(\rho u)=0,\\
\partial_t u+u\cdot \nabla u+\nabla g(\rho)=\nabla \bigg(K(\rho)\Delta\rho+\frac{1}{2}K'(\rho)
|\nabla\rho|^2\bigg),
\end{array}
\right.
x\in \R^d.
\end{equation}
The variables $\rho$ and $u$ are the density and speed of the fluid, the right hand side of the 
second line is the so called capillary tensor. The functions $K,g$ are defined on 
$\R^{+*}$ and are supposed to be smooth and positive. For the equations to make sense, it is necessary that 
$\rho>0$ a.e .
\\
The Korteweg tensor was first derived in the work of Dunn and Serrin \cite{DunSer} for 
models of phase transition, however the equations can appear in very various 
settings, from water waves (see \cite{BreschDesj}) to quantum hydrodynamics.\\
When $u$ is potential (the irrotational case)
\eqref{EK} has a hamiltonian structure : indeed if we write $u=\nabla \phi$, the second line of 
\eqref{EK} rewrites 
\begin{equation*}
\partial_t \phi+\frac{|\nabla\phi|^2}{2}+g(\rho)=K\Delta\rho+\frac{1}{2}K'(\rho)|\nabla \rho|^2,
\end{equation*}
For $G$ a primitive of $g$, we define the energy 
\begin{equation}\label{energy}
E(\rho,\phi)=\int \frac{K(\rho)|\nabla\rho|^2+\rho|\nabla \phi|^2}{2}+G(\rho)dx, 
\end{equation}
then \eqref{EK} reads 
\begin{equation}\label{hamEK}
\left\{
\begin{array}{ll}
\displaystyle \partial_t\rho -\frac{\delta E}{\delta \phi}=0,\\
\displaystyle \partial_t\phi + \frac{\delta E}{\delta \rho}=0.
\end{array}\right.
\end{equation}
In particular, for $(\rho,\phi)$ a solution with enough integrability and smoothness, 
$E(\rho,\phi)(t)$ is conserved. One can also check formally the conservation of momentum: if 
$\displaystyle \lim_{|x|\rightarrow \infty}\rho=\rho_0\in \R^{+*}$,
\begin{equation}\label{momentum}
\frac{d\vec{P}}{dt}:=\frac{d}{dt}\int_{\R^d} (\rho-\rho_0)\nabla \phi=0.
\end{equation}
Concerning the analysis of well-posedness, it was observed in \cite{BDD} that for smooth solutions 
without vacuum \eqref{EK} is equivalent to a quasi-linear degenerate Schr\"odinger equation. Due 
to this very nonlinear structure, the analysis of the Cauchy problem is quite involved. If 
$\overline{\rho},\overline{u}$ is a reference smooth solution, local well-posedness for 
$(\rho_0,u_0)\in (\overline{\rho}(0)+H^{s+1})\times (\overline{u}+H^s),\ s>d/2+1$ was obtained in 
\cite{Benzoni1}. The energy of the system allows to control at best $(\rho-\overline{\rho},
u-\overline{u})$ in $H^1\times L^2$, so in any dimension global well-posedness has remained mostly an open 
problem.\vspace{2mm}\\
In the special case $K=\kappa/\rho$, with $\kappa$ a positive constant and 
$u=\nabla \phi$ is irrotational, up to some rescaling there exists a formal correspondance with the 
nonlinear Schr\"odinger equation 
\begin{equation}\label{NLS}
i\partial_t\psi+\Delta \psi =g(|\psi|^2)\psi,
\end{equation}
through the \emph{Madelung transform} $(\rho,\nabla \phi)\mapsto \psi:=\sqrt{\rho}e^{i\phi}$, 
introduced in \cite{Madelung} (for more details see the review article \cite{CDS}). We will not dwell 
upon it but only mention that the nonlinearity $g$ in \eqref{NLS} becomes the pressure 
term, and if $\rho$ vanishes the transform becomes singular. 
Antonelli and Marcati \cite{AntMarc2} managed to exploit this correspondance 
in order to pass from global solutions of NLS (whose existence is standard, see the 
reference book \cite{Cazenave}) 
to global weak solutions of \eqref{EK}. In general such solutions admit vacuum and one can not 
hope to deduce uniqueness from such arguments.
In the special case $g(\rho)=\rho-1$, \eqref{EK} corresponds to the Gross-Pitaevskii equation 
which has received a lot of attention over the last fifteen years. In particular, global dispersive 
solutions of \eqref{NLS} were constructed in \cite{GNT3}. Such results were used 
to construct global \emph{unique} solutions of \eqref{EK} for small irrotational data by the author and 
B.Haspot in \cite{AudHasp}. \\
The result was later extended by the same authors in \cite{AudHasp2} for general $K$, $g$ and $d\geq 3$: 
for initial data near the constant state $(\rho_0,0)$ with the stability condition $g'(\rho_0)>0$, the 
solution is global and converges to a solution of the linearized equation near $(\rho_0,0)$ (in other words 
it scatters). The price to pay for this generalization is the necessity to work with much 
smoother functions, basically: $\rho-\rho_0\in H^{50}$. The idea behind this result is that 
the Euler-Korteweg equations \eqref{EK} and the Gross-Pitaevskii equation share the same linearized system (near 
$(\rho,u)=(\rho_0,0)$, resp. $\psi=1$) 
so that the same small data technics from the field of dispersive equations can be used. 

A natural question is then wether such an analogy is still true for \emph{nonlinear} phenomena
and in particular the existence for traveling waves which is known for a large class of 
nonlinear Schr\"odinger equations. This article gives a partial positive answer: our main result 
(theorem \ref{maintheo})
is the existence of small traveling waves in dimension $2$. Before turning to a precise 
statement, let us give some background about this issue.\\
The existence of \emph{planar} traveling waves, that is solutions of the form 
$(\rho(x_1-ct),u(x_1-ct))$ is a simpler problem as in this case \eqref{EK} can be reduced to a system of 
two ODEs. Due to the hamiltonian nature of the equations 
these ODEs are integrable by quadrature. If $g$ is not monotone (for example with a Van der 
Waals pressure law) all three types of interesting solutions exist (homoclinic, heteroclinic 
and periodic). In dimension one, the stability/unstability of such solutions is related to the 
notion of moment of instability from the seminal paper \cite{GSS} of Grillakis-Shatah-Strauss. 
In the absence of global well-posedness result, only conditional stability was derived for the 
corresponding traveling waves. On this topic, we offer a small contribution with theorem 
\ref{nonlininsta} which states that failure of the stability criterion from \cite{BDD2} implies 
nonlinear instability. For more details on planar traveling waves 
we refer to the rich review article \cite{Benzoni5}.
\paragraph{The existence of localized solitary waves} In dimension larger than $1$ the existence 
of \emph{localized} traveling waves that depend on $x-\vec{c}t$, with $\vec{c}$ the direction and speed 
of propagation, has been so far an open problem. Our main result is the existence of small energy traveling waves 
(see theorem \ref{maintheo}). The interest is twofold: besides giving global solutions 
to \eqref{EK}, the existence of arbitrarily small 
solitary waves in dimension $2$ is in strong contrast with the 
scattering of small solutions in dimension $\geq 3$. Note that while our results apply 
to general $K,\ g$, in the special case $K(\rho)=\kappa/\rho$, $g(\rho)=\rho-1$, the existence of 
solitary waves to \eqref{EK} might be deduced thanks to the Madelung transform from the existence 
of non-vanishing solitary waves to \eqref{NLS}.
However if $K$ is not proportional to $1/\rho$ (even $K\equiv$ constant), new difficulties appear due to the 
quasilinear nature of \eqref{EK}. \\
Concerning the expected range of speeds, the linearization of the Euler equations (without 
capillary terms) near $\rho=\rho_0,\ u=0$ is
\begin{equation*}
\left\{
\begin{array}{ll}
\partial_t\rho+\rho_0\text{div}u=0,\\
\partial_t u+g'(\rho_0)\nabla \rho=0.
\end{array}
\right.
\end{equation*}
If $g'(\rho_0)>0$, $\rho$ satisfies the wave equation $\partial_t^2\rho-\rho_0
g'(\rho_0)
\Delta \rho=0$, with the so-called \emph{sound speed} $c_s(\rho_0)=
\sqrt{\rho_0 g'(\rho_0)}$. By analogy with the Gross-Pitaevskii case, we expect that traveling waves with 
limit at infinity $(\rho_0,0)$ can only exist for subsonic speeds $|\vec{c}|\leq c_s$. Obviously,
the direction of the speed does not matter, thus from now on we restrict ourselves to $\vec{c}=c\,\vec{e_1}$.

\subparagraph{Some results on \eqref{NLS} with nonzero condition at infinity} If $g(1)=0$, 
a natural problem is the construction of solitary waves such that 
$\lim_{|x|\rightarrow \infty}|\psi|=1$. The case of the Gross-Pitaevskii equation $g(\rho)=\rho-1$ 
has attracted a lot of attention since the series of papers of Roberts and al 
\cite{Jones2}\cite{Jones}. Their formal and numerical computations brought a number of conjectures on
the existence of branches of solitary waves with speeds $c$ covering the subsonic range 
$(0,\sqrt{2})$ (the number $\sqrt{2}$ is related to $\sqrt{1\cdot g'(1)}=1$ after some rescaling), their 
stability and limit in the transonic regime. In dimension $2$ traveling waves were constructed for any 
$|c|$ small enough by B\'ethuel and Saut (\cite{BethSaut}, $98$) with a moutain pass argument. More recently 
they used with P. Gravejat in \cite{BGS} a constrained minimization method, that we will also follow.
To shortly describe it, let us introduce the momentum and energy
\begin{equation*}
\text{(momentum) }(\vec{P}_{NLS}(\psi)=\frac{1}{2}\text{Re}\int_{\R^d} i\nabla \psi(\overline{\psi}-1)dx.
\end{equation*}
\begin{equation*}
\text{(energy) }E_{NLS}(\psi)=\frac{1}{2}\int_{\R^d}|\nabla\psi|^2+G(|\psi|^2)dx
\end{equation*}
where $G$ is a primitive of $g$. In the Gross-Pitaevskii case, we simply have $G=(|\psi|^2-1)^2/2$.
These two quantities are formally conserved by the flow, also it is not hard to check formally
$$c\delta P_{NLS,1}(\psi)=\delta E_{NLS}(\psi)\Leftrightarrow -ic\partial_1\psi+\Delta\psi=(|\psi|^2-1)\psi.$$ 
It is thus tempting to construct 
solitary waves as minimizers of the energy with $P_{NLS,1}(\psi)=p$ fixed. 
However, it is a bit tedious
to give a functional framework where both $E_{NLS}$ and $\vec{P}_{NLS}$ make sense, and the 
existence of a lifting of $\psi$ on subsets of $\R^d$, while extremely useful, raises significant 
topological difficulties. Finally, in this approach the speed $c$ is only obtained as a Lagrangian 
multiplier, which precludes to reach the whole range $c\in (0,\sqrt{2})$. Nevertheless in \cite{BGS} 
the authors proved the existence of a branch of solutions parametrized by the momentum $p\in (p_0,\infty)$ 
in dimension $2$ and $3$ ($p_0=0$ in dimension $2$, $>0$ in dimension $3$). 
With an alternative approach, Maris \cite{Maris} obtained the existence of traveling waves for 
the full range $c\in (0,\sqrt{2})$ in dimension $\geq 3$ for a class of equations more general 
than Gross-Pitaevskii. The proof relied on the minimization of energy with a more subtle 
constraint based on a Pohozaev type identity.
Finally, the construction of solitary waves by minimization with fixed momentum was recently 
improved by Maris and Chiron \cite{ChirMar}, giving the precompactness of minimising sequences 
(which is a classical ingredient for orbital stability). 
\begin{rmq}
Note that in the case $K=1/\rho$, energy and momentum conservation for $\psi$ exactly correspond to 
energy and momentum conservation in \eqref{energy},\eqref{momentum} with $\psi=\sqrt{\rho}e^{i\phi/2}$. 
Indeed 
\begin{equation*}
E_{NLS}(\psi)=\frac{1}{2}\int_{\R^d}\frac{1}{4\rho}|\nabla\rho|^2+\frac{\rho}{4}|\nabla\phi|^2+
G(\rho)dx,\
\vec{P}_{NLS}=\frac{1}{4}\int \rho \nabla \phi=\frac{1}{4}\int (\rho-1)\nabla\phi\,dx.
\end{equation*}
\end{rmq}
\subparagraph{Rescaling, modified energy and main result}
We construct solutions of \eqref{EK} of 
the form 
$(\rho(x_1-ct,x_2)$, $u(x_1-ct,x_2))$, the system of partial differential equations to solve is 
\begin{equation}\label{TWEKnonrescal}
\left\{
\begin{array}{ll}
\displaystyle 
-c\partial_1\rho +\text{div}(\rho\nabla \phi)=0,\\
\displaystyle 
-c\partial_1\phi+\frac{|\nabla \phi|^2}{2}-K\Delta\rho-\frac{K'|\nabla\rho|^2}{2}+
g(\rho)=0.
\end{array}
\right.
\end{equation}

We will focus on the existence of localized traveling waves near the constant state $(\rho_0,0)$ 
with $g(\rho_0)=0$, $g'(\rho_0)>0$. We use the following rescaling :
\begin{eqnarray*}
(\rho,\phi)=\bigg(\rho_0\rho_r\bigg(\sqrt{\frac{g'(\rho_0)}{\rho_0)}}x\bigg),\ 
\phi_r\bigg(\sqrt{\frac{g'(\rho_0)}{\rho_0)}}x\bigg)\bigg),\ 
K_r(\rho_r)=\frac{K(\rho_0\rho_r)}{\rho_0},\\
g_r(\rho_r)=\frac{g(\rho_0\rho_r)}{g'(\rho_0)\rho_0},\ 
c_r=\frac{c}{\sqrt{\rho_0g'(\rho_0)}}.
\end{eqnarray*}
Then \eqref{TWEKnonrescal} is equivalent to 
\begin{equation}\label{TWEK}
\left\{
\begin{array}{ll}
\displaystyle 
-c_r\partial_1\rho_r+\text{div}(\rho_r\nabla \phi_r)=0,\\
\displaystyle 
-c_r\partial_1\phi_r+\frac{|\nabla \phi_r|^2}{2}-K_r\Delta\rho_r
-\frac{K_r'|\nabla\rho_r|^2}{2}+
g_r(\rho)=0.
\end{array}
\right.
\end{equation}
Of course, the point of this rescaling is that the constant state is now $1$, $g_r'(1)=1$ and the sound speed is 
$\sqrt{1g_r'(1)}=1$. From now on we drop the $r$ index and work on the rescaled system.

\noindent If we define the scalar momentum $P$ as
\begin{equation*}
P(\rho,\phi)=\int_{\R^d}(\rho-1)\partial_1\phi dx,
\end{equation*}
our starting point is that similarly to NLS, \eqref{TWEK} can be recast as *
$$c\delta P(\rho,\phi)=\delta E(\rho,\phi).$$
The scalar momentum $P$ is well-defined on\footnote{In dimension 
$2$ the space $\dot{H}^1$ requires a bit of cautiousness, see definition \ref{defH1}} 
$\mathcal{H}:=\{(\rho,\phi)\in (1+H^1)\times \dot{H}^1\}$. On the other hand, the energy has two 
flaws: depending on $G$ it may not make sense for general $(\rho,u)\in \mathcal{H}$, and even in the 
simple case $G=(\rho-1)^2/2$ it satisfies no coercive 
inequality $E(\rho,\phi)\gtrsim \|(\rho,\phi)\|_{\mathcal{H}}^2$.
Since we are interested in the  regime $|\rho-1|<<1$, the remedy is to work with a 
modified energy that we define now. We fix $\chi\in C^\infty(\R^+)$ nondecreasing such that 
\begin{equation}\label{propchi}
\chi(\rho)=\rho\text{ if }|\rho-1|<1/3,\
\chi|_{]-\infty,1/2]}=1/2,\ \chi_{[2,\infty[}=2 ,
\end{equation}
and define $\widetilde{G}$ as follows: since $G''(1)=g'(1)=1,\ G'(1)=g(1)=0$, we have 
$G(\rho)\geq (\rho-1)^2/3$ on some interval $(1-\delta,1+\delta)$, according to Borel's lemma 
there exists a smooth extension $\widetilde{g}$ of $g$ on $[1+\delta,1+2\delta]$, $[1-2\delta,1-\delta]$ such 
that for any $k\in \mathbb{N}$, 
$\widetilde{g}^{(k)}(1\pm 2\delta)=\frac{d^k(\rho-1)}{d\rho^k}(1\pm2\delta)$, and 
$\widetilde{G}>0$, then we set $\widetilde{g}=\rho-1$ on $(1-2\delta,1+2\delta)^c$, $\widetilde{G}
=\int_1^{\rho}\widetilde{g}(r)dr$. The function $\widetilde{G}$ satisfies
\begin{equation}\label{propgtilde}
\widetilde{G}\in C^\infty (\R),\ \widetilde{G}|_{(1-\delta,1+\delta)}=G,\ 
\widetilde{G} \gtrsim (\rho-1)^2,\ |\widetilde{G}'|\lesssim |\rho-1|.
\end{equation}
Now let us set 
\begin{equation}\label{modenergy}
\widetilde{E}(\rho,\phi)=\int_{\R^d}\frac{1}{2}(\chi(\rho)|\nabla \phi|^2+K(\chi(\rho))
|\nabla \rho|^2)+\widetilde{G}(\rho)dx.
\end{equation}
Obviously, if $\|\rho-1\|_\infty$ is small enough then $E(\rho,\phi)=\widetilde{E}(\rho,\phi)$, and from 
\eqref{propchi},\eqref{propgtilde}
\begin{equation*}
\forall\,(\rho,\phi)\in \mathcal{H},\ \widetilde{E}(\rho,\phi)\gtrsim \|\rho-1\|_{H^1}^2+\|\nabla 
\phi\|_2^2.
\end{equation*}
If $(\rho,\phi)$ is a solution of the minimization problem  
\begin{equation}
\label{minpb} 
\inf \{ \widetilde{E}(\rho,\phi),\ 
(\rho,\phi)\in \mathcal{H}:\ P(\rho,\phi)=p\},
\end{equation}
it should satisfy the following Euler-Lagrange equations where $\widetilde{g}:=\widetilde{G}'$ 
\begin{equation}\label{TWmod}
\exists\, c:
\left\{
\begin{array}{ll}
\displaystyle 
-c\partial_1\rho +\text{div}(\chi(\rho)\nabla \phi)=0,\\
\displaystyle 
-c\partial_1\phi+\chi'(\rho)\frac{|\nabla \phi|^2}{2}-K(\chi(\rho))\Delta\rho-\frac{(K\circ\chi)'
|\nabla\rho|^2}{2}+\widetilde{g}(\rho)=0.
\end{array}
\right.
\end{equation}
Since a solution of \eqref{TWmod} such that $\|\rho-1\|_\infty<<1$ is a solution of 
\eqref{TWEK}, our approach  will be to prove that for $p$ small enough, there exists 
existence of a solution to \eqref{minpb}, the minimizer is smooth and satisfies $\|\rho-1\|_\infty<<1$.  
We can now give a precise statement of our result, that we chose to state for the non-rescaled variables 
in order to underline the role of the physical variables.
\begin{theo}\label{maintheo}
Let $\rho_0\in \R^{+*}$ such that $g'(\rho_0)>0$, for $p>0$ we set
\begin{equation}
\widetilde{E}_{\min}(p):=\inf_{(\rho,\phi)\in (\rho_0+H^1)\times \dot{H}^1, P(\rho,\phi)=p} 
\widetilde{E}(\rho,\phi).
\end{equation}
Under the assumption $\Gamma:=3+\frac{\rho_0g''(\rho_0)}{g'(\rho_0)}\neq 0$, there exists
$p_0>0$ such that for any $0\leq p\leq p_0$, the infimum is attained at a minimizer 
$(\rho_p,\phi_p)\in \cap_{j\geq 0}(\rho_0+H^j)\times \dot{H}^j$, such that $(\rho_p,\phi_p)$ is a 
solution of \eqref{TWEKnonrescal} for some 
$c_p>0$. Moreover let $c_s=\sqrt{\rho_0g'(\rho_0)}$, then 
\begin{eqnarray}
\exists\,\alpha,\beta>0:\,\forall\,0\leq p\leq p_0,\ 
\ c_s p-\beta p^3&\leq&  \widetilde{E}_{\min}(p)=E(\rho_p,\phi_p)\leq c_sp-\alpha p^3,\\
\label{csharp} c_s-\beta p^2&\leq& c_p\leq c_s-\alpha p^2.
\end{eqnarray}
\end{theo}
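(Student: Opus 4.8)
The plan is to realise the wave as a minimiser of the constrained problem \eqref{minpb} and then, via elliptic regularity, to promote any such minimiser to a solution of \eqref{TWEKnonrescal}; everything rests on sharp two-sided bounds for $\widetilde{E}_{\min}(p)$ near the sonic value $c_s p$ (which, after the rescaling, is just $p$). I would first establish the upper bound $\widetilde{E}_{\min}(p)\le c_s p-\alpha p^3$ with test functions built on the anisotropic Kadomtsev--Petviashvili (KP-I) scaling governing subsonic waves in the small-amplitude limit: in the rescaled variables take $\rho-1=\varepsilon^2 A(\varepsilon x_1,\varepsilon^2 x_2)$ with $A=\partial_1 C$ a fixed Schwartz profile, and choose $\phi$ so that $\partial_1\phi=\varepsilon^2 A$. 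A direct expansion gives $P=\varepsilon\|A\|_{L^2}^2$ and $\widetilde{E}=c_s P+\varepsilon^3\mathcal{A}(A)+O(\varepsilon^5)$, where $\mathcal{A}(A)=\tfrac{1}{2}\|\partial_2\partial_1^{-1}A\|_{L^2}^2+\tfrac{K(1)}{2}\|\partial_1 A\|_{L^2}^2+\tfrac{\Gamma}{6}\int A^3$ is, up to normalisation, the KP-I action. Choosing $\varepsilon=p\,\|A\|_{L^2}^{-2}$ gives $\widetilde{E}_{\min}(p)\le c_s p-\alpha p^3+O(p^5)$ with $\alpha=-\mathcal{A}(A)\,\|A\|_{L^2}^{-6}$; the hypothesis $\Gamma\neq0$ enters only here, to guarantee that for a suitable profile (with $\int A^3$ of sign opposite to $\Gamma$ and large enough) the cubic term dominates and $\mathcal{A}(A)<0$.

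For the matching lower bound $\widetilde{E}_{\min}(p)\ge c_s p-\beta p^3$, decompose $\widetilde{E}=\widetilde{E}_2+\widetilde{E}_{\ge 3}$ into the quadratic part at $(\rho,\phi)=(1,0)$ and the remainder. A pointwise-in-frequency AM--GM inequality yields $\widetilde{E}_2(\rho,\phi)\ge c_s|P(\rho,\phi)|$ for all $(\rho,\phi)\in\mathcal{H}$, with a quantitative defect measuring the transverse frequencies $\xi_2$ and the high frequencies of $(\rho-1,\nabla\phi)$. On a minimising sequence, for which the coercivity recalled in the excerpt gives $\|(\rho-1,\nabla\phi)\|_{H^1\times\dot{H}^1}\lesssim\sqrt{p}$, the cubic remainder $\widetilde{E}_{\ge 3}$ — whose dangerous contributions are $\tfrac{\Gamma}{6}\int(\rho-1)^3$ and $\tfrac{1}{2}\int(\rho-1)|\nabla\phi|^2$ — is dominated by this defect plus $O(p^3)$, thanks to the sharp anisotropic (KP-I) Gagliardo--Nirenberg inequality and, for the second term in dimension $2$, a low/high frequency splitting of $\phi$. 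Together with the previous paragraph this gives, for $0\le p\le p_0$, the asymptotics $\widetilde{E}_{\min}(p)=c_s p-\gamma p^3+o(p^3)$ with $\gamma>0$ the sharp constant of the constrained KP-I minimisation (attained at the lump).

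With these estimates, existence of a minimiser follows by concentration--compactness, and this is where I expect the main difficulty to lie. Coercivity makes any minimising sequence $(\rho_n,\phi_n)$ bounded in $(1+H^1)\times\dot{H}^1$. Vanishing is impossible since it would force $\widetilde{E}(\rho_n,\phi_n)\to c_s p$, contradicting $\widetilde{E}_{\min}(p)<c_s p$. Dichotomy is impossible by the strict subadditivity $\widetilde{E}_{\min}(p_1)+\widetilde{E}_{\min}(p_2)>\widetilde{E}_{\min}(p_1+p_2)$ for $p_1,p_2>0$, itself a consequence of the strict concavity of $\widetilde{E}_{\min}$ on $[0,p_0]$ together with $\widetilde{E}_{\min}(0)=0$ — the concavity following from the expansion above and from soft properties of the minimisation as in \cite{ChirMar}. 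Hence, up to translations, $(\rho_n,\phi_n)\to(\rho_p,\phi_p)$ strongly in $(1+H^1)\times\dot{H}^1$, and the limit is non-trivial because $\widetilde{E}(\rho_p,\phi_p)<c_s p$ for $p>0$. The delicate point, proper to dimension $2$, is that the momentum $P$ is $L^2$-critical, so that control of $\rho_n-1$ and $\nabla\phi_n$ in $L^q$ with $q>2$ does not by itself control $P$; it is here, and in making the above dichotomy alternative rigorous, that the ideas of \cite{BGS} and \cite{ChirMar} are essential.

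It remains to identify the equation and the constant and to extract the asymptotics. Non-triviality gives $dP(\rho_p,\phi_p)\neq0$, hence a Lagrange multiplier $c_p$ with $(\rho_p,\phi_p)$ a weak solution of \eqref{TWmod}; testing the first equation against $\phi_p$ yields $c_p\,p=\int\chi(\rho_p)|\nabla\phi_p|^2>0$, the integral being positive because $P=p>0$ prevents $\phi_p$ from being constant, so $c_p>0$. For regularity the crucial structural fact is that $\chi$ and $K\circ\chi$ are, by construction, bounded and bounded away from $0$, so both equations of \eqref{TWmod} are uniformly elliptic in divergence form once the capillary term is written as $\sqrt{K\circ\chi}\,\text{div}(\sqrt{K\circ\chi}\,\nabla\rho)$; De Giorgi--Nash--Moser followed by Schauder bootstrap then yields $(\rho_p,\phi_p)\in\cap_{j\ge0}(\rho_0+H^j)\times\dot{H}^j$, with all norms bounded uniformly for $p\le p_0$. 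Since $\|\rho_p-1\|_{L^2}^2\lesssim\widetilde{E}_{\min}(p)\lesssim p$, Agmon's inequality $\|f\|_\infty\lesssim\|f\|_{L^2}^{1/2}\|f\|_{H^2}^{1/2}$ in dimension $2$ forces $\|\rho_p-1\|_\infty\to0$ as $p\to0$; hence, shrinking $p_0$, the cutoffs $\chi$ and $\widetilde{g}$ become inactive, \eqref{TWmod} coincides with the rescaled \eqref{TWEK}, and undoing the rescaling produces a solution of \eqref{TWEKnonrescal} with $c_p=c_{r,p}\sqrt{\rho_0 g'(\rho_0)}>0$. The energy bound $c_s p-\beta p^3\le\widetilde{E}_{\min}(p)=E(\rho_p,\phi_p)\le c_s p-\alpha p^3$ is the content of the first two paragraphs; the speed bound \eqref{csharp} then follows from the Pohozaev-type identities for \eqref{TWEK} obtained by testing the equations against $\phi_p$, against $\rho_p-\rho_0$ and against the dilation fields $x_1\partial_1$ and $x_2\partial_2$, which express $c_p\,p$ through $\widetilde{E}_{\min}(p)$ and the quadratic energy pieces up to a cubic error, giving $c_p=c_s-\Theta(p^2)$ with a correction of the same sign as $\widetilde{E}_{\min}(p)-c_s p$.
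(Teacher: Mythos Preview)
Your overall architecture — KP-I ansatz for the upper bound, strict subadditivity from concavity, Lagrange multiplier and Pohozaev for the speed — is the paper's. But the execution diverges at three load-bearing places, and two of them, as written, do not close.

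\textbf{Lower bound.} The paper does \emph{not} prove $\widetilde{E}_{\min}(p)\ge c_sp-\beta p^3$ a priori. Your AM--GM/defect scheme gives $\widetilde{E}_2-P=D:=\tfrac12\int(\rho-1-\partial_1\phi)^2+(\partial_2\phi)^2+K(1)|\nabla\rho|^2$, but on a generic near-minimiser one only has $\|\rho-1\|_2^2\lesssim p$, so the 2D Gagliardo--Nirenberg bound $\int|\rho-1|^3\lesssim\|\rho-1\|_2^2\|\nabla\rho\|_2\lesssim p\sqrt{D}$ yields at best $\epsilon D+C_\epsilon p^2$, not $p^3$; the anisotropic KP inequality you invoke needs $\rho-1\in\partial_1(L^2)$, which is not available. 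The paper instead proves the lower bound \emph{after} constructing a smooth minimiser: from \eqref{poho3}--\eqref{energie1} one gets $\widetilde{E}_{\min}(p)\ge c_pp$, and $1-c_p\le\beta p^2$ is obtained by rewriting \eqref{TWmod} as $(K(1)\Delta^2-\Delta+c\partial_1^2)\rho=\Delta A+c\partial_1\text{div}\,B$ with $\|A\|_{L^1}+\|B\|_{L^1}\lesssim\widetilde{E}_{\min}$ and computing the explicit $L^2$ norm of the multiplier $|\xi|^2/(K(1)|\xi|^4+|\xi|^2-c\xi_1^2)$ (Proposition~\ref{estimvitesse}).

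\textbf{Existence.} The paper does \emph{not} run concentration--compactness on a raw minimising sequence in $\mathcal{H}$. It first minimises a mollified energy $\widetilde{E}_n^\varepsilon=\widetilde{E}_n+\tfrac{\varepsilon}{2}\int(|\Delta\rho|^2+|\Delta\phi|^2)$ on tori $\mathbb{T}_n^2$, obtains \emph{smooth} minimisers with uniform-in-$(n,\varepsilon)$ $H^j$ bounds via a small-energy bootstrap, lets $\varepsilon\to0$, and only then lets $n\to\infty$ through a profile decomposition (Lemma~\ref{profildec}). The anti-spreading estimate \eqref{controlspread} is obtained by testing the \emph{equations} \eqref{elltore} with truncations of $(\rho_n,\phi_n)$ — this presupposes that each $(\rho_n,\phi_n)$ already solves the Euler--Lagrange system, which a raw minimising sequence on $\R^2$ does not. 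Your claim that vanishing forces $\widetilde{E}\to c_sp$ also faces the obstacle that the cubic piece $\tfrac12\int(\chi(\rho_n)-1)|\nabla\phi_n|^2$ need not vanish: $\nabla\phi_n$ is only in $L^2$, so Lions' lemma gives nothing for $|\nabla\phi_n|^2$.

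\textbf{Regularity.} Your divergence-form rewriting absorbs $(K\circ\chi)'|\nabla\rho|^2$, but the residual source $\chi'(\rho)|\nabla\phi|^2$ is only $L^1$, borderline for De~Giorgi--Nash--Moser in dimension~2. The paper flags this explicitly (paragraph above Proposition~\ref{mainest}) and instead closes the bootstrap $\|\Delta\rho\|_2\lesssim\sqrt{\widetilde{E}}+\|\nabla\rho\|_2\|\Delta\rho\|_2$ using the smallness $\|\nabla\rho\|_2\ll1$; this is why the mollified-torus route is taken first, to have smooth objects on which the estimate can be run.

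In short: the upper bound and the Pohozaev identities are as in the paper; the lower bound, the compactness, and the regularity are obtained by a different mechanism (a posteriori via the equation, torus approximation with smooth minimisers, small-energy bootstrap) than the one you sketch.
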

\begin{rmq}
It is not clear if $(\rho_p,\phi_p)$ is a constrained minimizer of $E$.
\end{rmq}
\begin{rmq}
The assumption $\Gamma\neq 0$ is not technical. In the case of NLS it appears in the recent paper \cite{ChirMar} as 
necessary and sufficient for the strict concavity of $E_{NLS,\min}$ near $0$, a condition which is important, if not 
unavoidable, for the construction of minimizers. If $\Gamma=0$ scattering of solutions of \eqref{EK} 
for small data seems expectable but remains so far open, even for NLS.
\end{rmq}

\subparagraph{Idea of proof}
We first point out that (contrary to the NLS case), it is not easy to get elliptic regularity 
from equations \eqref{TWEK}, indeed they basically look like $\Delta f=|\nabla f|^2$, and the argument 
$f\in H^1\Rightarrow 
|\nabla f|^2\in L^1\Rightarrow f\in W^{2,1}\hookrightarrow H^1$ does not allow to bootstrap trivially regularity. 
But since the failure is somewhat ``critical'', working with $\|(\rho,\phi)\|_{\mathcal{H}}<<1$ allows to overcome 
this issue (this is done in proposition \ref{mainest}).\vspace{1mm}\\
The major issue is of course the defect of compactness in $\R^2$. 
In the spirit of \cite{BGS}, this is overcome by first solving the minimization 
problem on large tori $\mathbb{T}_n^2=(\R/2\pi n\Z)^2$, on which thanks to the compact embedding 
$H^1\hookrightarrow L^2$ the existence of a minimizer is easy. 
In order to handle smoothness 
issues, we follow a regularization procedure: we use a ``mollified 
energy'' $$\widetilde{E}_\varepsilon^n(\rho,\phi)=\widetilde{E}+\frac{\varepsilon}{2}
\int_{\mathbb{T}_n^2} |\Delta \phi|^2+|\Delta \rho|^2dx$$ and prove that minimizers of small energy 
satisfy elliptic estimates independent of $\varepsilon$ (smallness is essential for this step). 
Letting $\varepsilon\rightarrow 0$, this provides a solution $(\rho_n,\phi_n)$ 
of the minimization problem on $\mathbb{T}_n^2$. Next we let $n\rightarrow \infty$, and prove  
the convergence of $(\rho_n,\phi_n)$ to $(\rho,\phi)$, solution of the constrained 
minimization problem \eqref{minpb}.\\
The main tool to get some compactness of the sequence $(\rho_n,\phi_n)$ is the 
strict concavity\footnote{Actually the key is not concavity, but a consequence: strict subadditivity.} 
of the minimal energy $\widetilde{E}_{\min}(p)$
which is obtained by mixing general abstract arguments and \emph{ad hoc} computations. To get a feeling of 
how concavity is used, 
consider the following simplified version of dichotomy in Lions's concentration compactness 
principle: assume that instead of converging to a minimizer, $(\rho_n,\phi_n)$ splits in two 
parts, namely there exists functions $(\rho^1,\phi^1)$, $(\rho^2,\phi^2)$ such that 
$\widetilde{E}_{\min}^n(\rho_n,\phi_n)\longrightarrow \widetilde{E}(\rho^1,\phi^1)+
\widetilde{E}(\rho^2,\phi^2)$, $P(\rho_n,\phi_n)\longrightarrow P(\rho^1,\phi^1)
+P(\rho^2,\phi^2)$. Then passing to the limit in $n$ we have
\begin{eqnarray*}
P(\rho^1,\phi^1)+P(\rho^2,\phi^2)&=&p_1+p_2=p,\\
\widetilde{E}_{\min}(p_1+p_2)&=&\widetilde{E}(\rho^1,\phi^1)+\widetilde{E}(\rho^2,\phi^2)
\geq \widetilde{E}_{\min}(p_1)+\widetilde{E}_{\min}(p_2).
\end{eqnarray*}
On the other hand, by strict subadditivity $\widetilde{E}_{\min}(p_1+p_2)<\widetilde{E}_{\min}(p_1)
+\widetilde{E}_{\min}(p_2)$, which is a contradiction. For a remarkably clear and general discussion 
on this strategy, we refer to the seminal paper of P.L. Lions \cite{LionsCC}.
\subparagraph{Plan of the paper}
The rest of the article is organized as follows: in section $2$, we prove a key elliptic 
estimate for solutions of \eqref{TWmod} in a simple case, in section $3$ we establish 
the concavity of $\widetilde{E}_{\min}$ and the upper bound $\wE{\min}\leq p-\alpha p^3$ from 
which we deduce its strict subadditivity.
Theses sections are preliminaries to section $4$ where we prove 
the existence of solutions to the minimization problem \eqref{minpb}: we first study the 
minimization problem on $\mathbb{T}_n^2$ for fixed $n$. We obtain the existence of constrained 
minimizers for the mollified energy $\widetilde{E}_\varepsilon^n$, 
from which we deduce the existence of smooth minimizers for the nonregularized problem. Letting 
$n\rightarrow \infty$, we obtain the convergence of minimizers on $\mathbb{T}_n^2$ to a minimizer 
on $\R^2$ with a concentration compactness argument. 
Finally we complete the a priori estimates of $\widetilde{E}_{\min}$ and $c$ thanks to Pohozaev type 
identities in section \ref{poho}.
The concentration compactness argument relies on a kind of profile decomposition 
essentially similar to the one in \cite{BGS}, for completeness we prove its existence in the 
appendix \ref{appendiceprof}.
In appendix \ref{unde} we discuss the one-dimensional case, where explicit computations allow 
to observe very strong similarities with one-dimensional NLS, and prove a new nonlinear instability 
property of some solitary waves.

\subparagraph{Notations} If $a\leq Cb$, $a,b,C>0$, $C$ a constant independent of the parameters, 
we write  $a\lesssim b$. If $C_1a\leq b\leq C_2a$ with $C_1,C_2$ positive constants, we write 
$a\sim b$ if there is no ambiguity with the usual meaning of $\sim$. We denote the Fourier 
transform of an application $\phi$ as $\widehat{\phi}$.\\
As mentioned in the introduction, $\mathcal{H}=\{(\rho,\phi)\in (1+H^1)\times \dot{H}^1\}$.

\section{An elliptic estimate}\label{secell}
We first clarify our functional framework.
\begin{define}\label{defH1}
The space $\dot{H}^1$ is the set of $\phi\in L^2_{\text{loc}}$ such that 
$\nabla \phi\in L^2$ in the distributional sense. We define $L^2_{\text{curl}}:=\nabla \dot{H}^1$
with norm $\|\nabla\phi\|_{L^2}$.
\end{define}
We shall need the following standard density result.
\begin{prop}\label{anfunc} 
$L^2_{\text{curl}}$ coincides with $\{u\in L^2:\ \text{curl}(u)=0\}$, and thus 
is a Hilbert space, in which $\nabla C^\infty_c(\R^2)$ is dense.
\end{prop}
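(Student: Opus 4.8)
The plan is to prove the set identity $L^2_{\text{curl}}=\{u\in L^2:\ \text{curl}(u)=0\}$, then read off the Hilbert space property, and finally establish the density of $\nabla C_c^\infty(\R^2)$. \emph{Easy inclusion and closedness.} If $u=\nabla\phi$ with $\phi\in\dot H^1$, then $\text{curl}(u)=\partial_1\partial_2\phi-\partial_2\partial_1\phi=0$ in $\mathcal D'(\R^2)$, so $L^2_{\text{curl}}\subset\{u\in L^2:\ \text{curl}(u)=0\}$. The right-hand set is cut out by the continuous linear conditions $\int_{\R^2}u\cdot\nabla^\perp\psi\,dx=0$, $\psi\in C_c^\infty(\R^2)$, hence is a closed subspace of $L^2(\R^2;\R^2)$; once the reverse inclusion is known, $L^2_{\text{curl}}$ is therefore complete for the norm $\|\nabla\phi\|_{L^2}$, which is exactly the $L^2$ norm of $u$, and the inherited scalar product makes it a Hilbert space.

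\emph{Reverse inclusion via the Poincar\'e lemma.} Let $u\in L^2$ with $\text{curl}(u)=0$. First I would mollify, $u_\varepsilon:=u*\eta_\varepsilon\in C^\infty(\R^2)$ for a standard mollifier $\eta_\varepsilon$; then $u_\varepsilon$ is still curl-free and $u_\varepsilon\to u$ in $L^2$. Since $\R^2$ is simply connected, the smooth Poincar\'e lemma gives $\phi_\varepsilon\in C^\infty$ with $\nabla\phi_\varepsilon=u_\varepsilon$ (explicitly $\phi_\varepsilon(x)=\int_0^1u_\varepsilon(tx)\cdot x\,dt$, which one checks differentiates to $u_\varepsilon$ using $\partial_i u_{\varepsilon,j}=\partial_j u_{\varepsilon,i}$). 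On each ball $B_R$, $\phi_\varepsilon|_{B_R}\in H^1(B_R)$, and the Poincar\'e--Wirtinger inequality applied to $\phi_\varepsilon-\phi_{\varepsilon'}$ shows that $\phi_\varepsilon$, after subtraction of its mean over $B_R$, is Cauchy in $L^2(B_R)$ as $\varepsilon\to0$; a diagonal extraction over $R=2^k$ (adjusting by additive constants so the limits on nested balls agree) produces a single $\phi\in L^2_{\text{loc}}(\R^2)$ with $\nabla\phi=u$, i.e. $u\in L^2_{\text{curl}}$.

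\emph{Density of $\nabla C_c^\infty(\R^2)$.} Given $u=\nabla\phi\in L^2_{\text{curl}}$, fix $\theta\in C_c^\infty(\R^2)$ with $\theta\equiv1$ on $B_1$ and $\text{supp}\,\theta\subset B_2$, set $\theta_R=\theta(\cdot/R)$ and let $c_R$ be the average of $\phi$ over the annulus $B_{2R}\setminus B_R$. Then $\nabla\big(\theta_R(\phi-c_R)\big)=\theta_R\nabla\phi+(\phi-c_R)\nabla\theta_R$, where the first term tends to $\nabla\phi$ in $L^2$ by dominated convergence, and for the second, since $|\nabla\theta_R|\lesssim R^{-1}$ on $B_{2R}\setminus B_R$, the scale-invariant Poincar\'e inequality on that annulus gives $\|(\phi-c_R)\nabla\theta_R\|_{L^2}^2\lesssim R^{-2}\int_{B_{2R}\setminus B_R}|\phi-c_R|^2\lesssim\int_{B_{2R}\setminus B_R}|\nabla\phi|^2\to0$ as $R\to\infty$. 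Hence $\nabla\big(\theta_R(\phi-c_R)\big)\to u$ in $L^2$. Each $\theta_R(\phi-c_R)$ is compactly supported with $L^2$ gradient, hence lies in $H^1$, so mollification approximates it in $H^1$ — thus its gradient in $L^2$ — by functions in $C_c^\infty(\R^2)$; a last diagonal argument yields the density.

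\emph{Main obstacle.} The only genuinely two-dimensional point — and the reason for the footnote warning about $\dot H^1$ — is that $\phi\in\dot H^1(\R^2)$ need not belong to any Lebesgue space, so one cannot truncate $\phi$ directly in the density step; subtracting the annular averages $c_R$ and invoking the dilation-invariant Poincar\'e inequality on annuli is precisely what makes the cutoff harmless. The Fourier-side shortcut $\widehat\phi=-i\,\xi\!\cdot\!\widehat u/|\xi|^2$ runs into the same difficulty, as this expression is not obviously locally integrable near $\xi=0$ in $\R^2$.
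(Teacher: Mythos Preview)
Your argument is correct, but it follows a different path from the paper on both halves of the proposition.

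For the set identity $L^2_{\text{curl}}=\{u\in L^2:\text{curl}(u)=0\}$, the paper simply cites the literature, whereas you supply a self-contained constructive proof via mollification and the smooth Poincar\'e lemma, patching the potentials on nested balls with Poincar\'e--Wirtinger. This is heavier but has the merit of being explicit; note that the diagonal extraction is in fact unnecessary, since on each fixed ball the normalized sequence $\phi_\varepsilon-(\phi_\varepsilon)_{B_R}$ is genuinely Cauchy (not merely bounded), and the limits on nested balls differ by constants that automatically converge.

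For the density of $\nabla C_c^\infty$, the paper exploits the Hilbert structure it has just obtained and argues by orthogonal complement: if $\nabla\phi\perp\nabla C_c^\infty$ then $\Delta\phi=0$ in $\mathcal S'$, hence $|\xi|^2\widehat\phi=0$, so $\widehat\phi$ is supported at the origin with order $\le 1$, i.e.\ $\phi$ is affine, and $\nabla\phi\in L^2$ forces $\nabla\phi=0$. Your route is instead a direct approximation by truncation $\theta_R(\phi-c_R)$ with annular means $c_R$, controlling the commutator term via the scale-invariant Poincar\'e inequality on $B_{2R}\setminus B_R$. The paper's argument is shorter and uses only soft Fourier analysis; yours is constructive, avoids Fourier entirely, and makes transparent exactly the point you flag as the ``main obstacle'' --- that in $\R^2$ one cannot expect $\phi$ itself to lie in any $L^p$, so naive cutoffs fail and the subtraction of $c_R$ is essential.
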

\begin{proof}
For the first part see e.g. \cite{Simon} or \cite{TartarTopics}. For the second part it suffices 
to check $(\nabla C_c^\infty)^{\perp}=\{0\}$. If $\nabla \phi\in (\nabla C_c^\infty)^{\perp}$ then 
$\Delta \phi=0 \ (\mathcal{D}')$, and $\nabla \phi\in L^2$ thus $\Delta\phi=0\ (\mathcal{S}')$. 
Thus $|\xi|^2\widehat{\varphi}=0$. This implies 
that $\widehat{\varphi}$ is a linear combination the Dirac distribution at $0$ and its first order 
derivatives, equivalently $\varphi$ is a first order polynomial. The condition 
$\nabla \phi\in L^2$ then implies that $\varphi$ is a constant, so that $u=0$. 
\end{proof}

\begin{prop}\label{mainest}
Let $M>0$, $(\rho,\phi)$ a solution of \eqref{TWmod} with $|c|\leq M$. There exists 
$\varepsilon,C>0$ depending only on $M$\footnote{of course it depends also on 
$K\circ \chi$ and $\widetilde{G}$ but it does not matter for the analysis.}
such that 
\begin{equation*}
(\rho,\nabla\phi)\in 1+H^2\times H^1 \text{ and } \widetilde{E}(\rho,\phi)<\varepsilon
\Rightarrow \|\rho-1\|_\infty<C\sqrt{\widetilde{E}(\rho,\phi)}.
\end{equation*}
In particular for $\widetilde{E}(\rho,\phi)$ small enough, a (smooth) solution of \eqref{TWmod} is a 
traveling wave of the Euler-Korteweg system.
\end{prop}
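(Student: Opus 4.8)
The plan is to read \eqref{TWmod} as a pair of semilinear elliptic equations, one for $\phi$ and one for $\rho$, and to run a bootstrap on $\R^2$ in which the smallness of $\widetilde E$ is used to \emph{absorb} the quadratic terms. Throughout, write $\eta=\rho-1$ and $\mu=\sqrt{\widetilde E(\rho,\phi)}$. The first observation is that, thanks to the very construction of $\chi$ and $\widetilde G$ in \eqref{propchi}--\eqref{propgtilde}, every nonlinear coefficient occurring in \eqref{TWmod} is \emph{globally} controlled, no matter how large $\rho$ is: one has $\chi(\rho)\in[1/2,2]$, $K\circ\chi$ is bounded from above and from below by positive constants, $\chi'$ and $(K\circ\chi)'$ are bounded, and $|\widetilde g(\rho)|=|\widetilde g(\rho)-\widetilde g(1)|\lesssim|\eta|$. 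Combined with the coercivity $\widetilde E(\rho,\phi)\gtrsim\|\eta\|_{H^1}^2+\|\nabla\phi\|_{L^2}^2$ recorded right after \eqref{modenergy}, this gives $\|\eta\|_{H^1}+\|\nabla\phi\|_{L^2}\lesssim\mu$.

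Next I would solve \eqref{TWmod} for the two Laplacians: the mass equation yields $\chi(\rho)\,\Delta\phi=c\,\partial_1\eta-\chi'(\rho)\,\nabla\eta\cdot\nabla\phi$, and the Bernoulli equation yields $K(\chi(\rho))\,\Delta\rho=-c\,\partial_1\phi+\tfrac12\chi'(\rho)|\nabla\phi|^2-\tfrac12(K\circ\chi)'|\nabla\eta|^2+\widetilde g(\rho)$. Dividing by the bounded, nondegenerate coefficients and using $|c|\le M$, one gets pointwise bounds $|\Delta\phi|\lesssim|\partial_1\eta|+|\nabla\eta|\,|\nabla\phi|$ and $|\Delta\rho|\lesssim|\partial_1\phi|+|\nabla\phi|^2+|\nabla\eta|^2+|\eta|$, with constants depending only on $M$. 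Here the a priori hypothesis $(\rho,\nabla\phi)\in1+H^2\times H^1$ enters: it guarantees that $A:=\|D^2\eta\|_{L^2}+\|D^2\phi\|_{L^2}$ is finite. On $\R^2$ one has the identities $\|D^2\eta\|_{L^2}=\|\Delta\eta\|_{L^2}$ and $\|D^2\phi\|_{L^2}=\|\Delta\phi\|_{L^2}$ (Plancherel, using $\mathrm{curl}\,\nabla\phi=0$ for the second), together with the Gagliardo--Nirenberg inequality $\|\nabla u\|_{L^4(\R^2)}\lesssim\|\nabla u\|_{L^2}^{1/2}\|D^2u\|_{L^2}^{1/2}$; the latter turns all the quadratic contributions into $\|\nabla\eta\cdot\nabla\phi\|_{L^2}+\|\,|\nabla\phi|^2\|_{L^2}+\|\,|\nabla\eta|^2\|_{L^2}\lesssim\mu A$. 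Taking $L^2$ norms in the two displayed identities and adding then gives
\begin{equation*}
A\;\le\;C_M\big(\mu+\mu A\big).
\end{equation*}

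Now I would choose $\varepsilon>0$ (depending only on $M$) so small that $C_M\sqrt{\varepsilon}\le 1/2$; then, whenever $\widetilde E(\rho,\phi)<\varepsilon$, the term $C_M\mu A$ is absorbed into the left-hand side and one obtains $A\le 2C_M\mu$, hence $\|\eta\|_{H^2}\lesssim_M\mu=\sqrt{\widetilde E(\rho,\phi)}$. The two-dimensional embedding $H^2(\R^2)\hookrightarrow L^\infty(\R^2)$ then yields $\|\rho-1\|_\infty\le C\sqrt{\widetilde E(\rho,\phi)}$ with $C=C(M)$, which is the claimed estimate (one may instead interpolate $\|\eta\|_\infty\lesssim\|\eta\|_{H^1}^{1/2}\|\eta\|_{H^2}^{1/2}$ for a marginally cleaner bound, but this is not needed). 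For the last assertion I would shrink $\varepsilon$ further so that $C\sqrt{\varepsilon}<\min(\delta,1/3)$, $\delta$ being the radius from \eqref{propgtilde}: then $\|\rho-1\|_\infty<1/3$ forces $\chi(\rho)=\rho$ (in particular $\rho>0$), $(K\circ\chi)(\rho)=K(\rho)$ and $\widetilde g(\rho)=g(\rho)$, so that \eqref{TWmod} collapses exactly onto \eqref{TWEK}; undoing the rescaling, $(\rho,\phi)$ is a genuine traveling wave of \eqref{EK}.

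The step I expect to be delicate is precisely the one flagged in the introduction: the equations are ``critical'' — the naive chain $|\nabla f|^2\in L^1\Rightarrow f\in W^{2,1}$ does not close — so the whole argument hinges on the smallness of $\widetilde E$, which is what makes the quadratic terms subordinate to $A$ via Gagliardo--Nirenberg; correspondingly, the a priori $H^2\times H^1$ regularity is not a luxury but is exactly what makes the absorption legitimate (without it $A$ could be $+\infty$ and the inequality $A\le C_M(\mu+\mu A)$ would be vacuous). The other thing to watch is the uniform bound on every nonlinear coefficient appearing in \eqref{TWmod} — which is precisely what the truncation $\chi$ and the modified potential $\widetilde G$ were introduced for.
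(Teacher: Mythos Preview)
Your proof is correct and follows essentially the same route as the paper's: solve \eqref{TWmod} for the two Laplacians, use the two-dimensional Gagliardo--Nirenberg inequality $\|\nabla f\|_{L^4}^2\lesssim\|\nabla f\|_{L^2}\|\Delta f\|_{L^2}$ to turn the quadratic terms into $\mu A$, and absorb via the smallness of $\widetilde E$. The only cosmetic difference is that the paper treats the two equations sequentially (first controlling $\|\nabla\phi\|_{L^4}$ from the mass equation via an energy identity, then feeding this into the $L^2$ bound on $\Delta\rho$), whereas you bundle both second-order quantities into a single $A$ and close in one shot; the paper's intermediate step $\|u\|_4^2\lesssim\|u\|_2^2\|\nabla\rho\|_4^2+\|u\|_2\|\partial_1\rho\|_2$ is exactly what your inequality $A\le C_M(\mu+\mu A)$ encodes after unpacking.
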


\begin{proof}
Setting $u=\nabla \phi$ and denoting $\chi$ for $\chi(\rho)$ we have from the first equation
\begin{equation*}
\Delta \phi+\frac{\nabla\chi\cdot u}{\chi}-c\frac{\partial_1\rho}{\chi}=0
\Rightarrow 
\Delta u+\nabla \big(\nabla \ln(\chi)\cdot u\big)-c\nabla \bigg(\frac{\partial_1\rho}{\chi}\bigg)
=0
\end{equation*}
Taking the gradient of the equation, the scalar product with $u$ and integrating, we get 
\begin{equation*}
\int |\nabla u|^2+\bigg(\nabla \ln\chi \cdot u-c\frac{\partial_1\rho}{\chi}\bigg)\text{div}u\,dx=0
\end{equation*}
so that from Cauchy-Schwarz's inequality
\begin{equation*}
\frac{1}{2}\|\nabla u\|_2^2\leq 2\|\nabla \ln \chi\cdot u\|_2^2+2c\|\partial_1\rho/\chi\|_2^2\leq  
2\|\nabla \ln \chi\|_4^2\|u\|_4^2+2c\|\dun\rho/\chi\|_2^2.
\end{equation*}
If $d=2$, we use $\|u\|_4\lesssim \|u\|_{\dot{H}^{1/2}}\lesssim \|u\|_2^{1/2}
\|\nabla u\|_{2}^{1/2}$ so that 
\begin{eqnarray*}
\|u\|_4^2&\leq& C\|u\|_2\|\nabla u\|_2\leq \sqrt{2}C\|u\|_2(\|\nabla \ln \chi\|_4\|u\|_4+
+c\|\dun\rho/\chi\|_2),\\
\Rightarrow \|u\|_4^2&\leq& \frac{C}{\sqrt{2}}\|u\|_2^2\|\nabla \ln \chi\|_4^2
+MC\sqrt{2}\|u\|_2\|\dun\rho/\chi\|_2\\
&\lesssim & \|u\|_2^2\|\nabla \rho\|_4^2+\|u\|_2\|\dun\rho\|_2,
\end{eqnarray*}
where we used $\|\chi'\nabla \rho\|_2\lesssim \|\nabla \rho\|_2$. 
Next we rewrite the momentum equation as 
\begin{equation*}
\Delta \rho =\frac{-c\dun\rho}{K\circ \chi}+\frac{\chi'}{2K\circ \chi}|\nabla \phi|^2
-\frac{(K\circ \chi)'}{2K\circ \chi}|\nabla \rho|^2+\frac{\widetilde{g}}{K\circ \chi}.
\end{equation*}
Since $K$ is smooth, positive on $]0,\infty[$, $(K\circ\chi)'$ and $1/K\circ \chi$ are uniformly 
bounded, and from \eqref{propgtilde} taking the $L^2$ norm gives
\begin{eqnarray*}
\|\Delta \rho\|_2&\lesssim& \|\partial_1\rho\|_2+\|\nabla \phi\|_4^2+\|\nabla \rho\|_4^2
+\|\rho-1\|_{2}\\
&\lesssim& \|\partial_1\rho\|_2+\|\partial_1\rho\|_2\|u\|_2+\|\nabla \rho\|_4^2
+\|u\|_2^2\|\nabla \rho\|_4^2+\|\rho-1\|_2.
\end{eqnarray*}
Next we use again Sobolev's embedding 
$\|\nabla \rho\|_4^2\lesssim \|\nabla\rho\|_2\|\Delta\rho\|_2$ which gives
\begin{equation*}
\|\Delta \rho\|_2\leq C\big(\|\partial_1\rho\|_2+\|\partial_1\rho\|_2\|u\|_2
+\|\rho-1\|_2+(1+\|u\|_2^2)\|\nabla \rho\|_2\|\Delta\rho\|_2\bigg).
\end{equation*}
We recall that 
$\widetilde{E}(\rho,\phi)\gtrsim \|\rho-1\|_{H^1}^2+\|\nabla \phi\|_2^2$, so that if 
$\widetilde{E}(\rho,\phi)$is small enough, $C(1+\|u\|_2^2)\|\nabla \rho\|_2<1/2$ and we deduce 
\begin{equation*}
\|\Delta \rho\|_2\leq C\big(\|\partial_1\rho\|_2+\|\partial_1\rho\|_2\|u\|_2
+\|\rho-1\|_2)\lesssim \sqrt{\widetilde{E}(\rho,\phi)}.
\end{equation*}
From Sobolev's embedding we conclude $\|\rho-1\|_{\infty}\lesssim \|\rho-1\|_{H^2}\lesssim 
\sqrt{\widetilde{E}(\rho,\phi)}$. In particular if the energy is small enough 
$\chi(\rho)=\rho,\ \widetilde{G}(\rho)=G(\rho)$ and $\rho$ is a solution of \eqref{TWEK}.
\end{proof}

\section{Properties of the energy}
We recall $\widetilde{E}_{\text{min}}(p)=\inf_{P(\rho,\phi)=p}\widetilde{E}(\rho,\phi)$. 
We start with some properties that are true in generic minimization settings (continuity, 
concavity of $\wE_{\min}$)
before tackling the strict subadditivity of $\wE_{\min}$, where we use the structure of 
$\wE$ and $P$.
\begin{lemma}\label{smoothmin} For any $p\geq 0$, there exists a minimising sequence 
$(\rho_n,\nabla \phi_n) \in (1+C_c^\infty(\R^2))\times C^\infty_c(\R^2)$.
\end{lemma}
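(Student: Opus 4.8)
The plan is to combine the density statements recorded in Section~\ref{secell} with a change of scale that restores the momentum constraint. Fix $\varepsilon>0$. By definition of the infimum there is $(\rho,\phi)\in\mathcal H$ with $P(\rho,\phi)=p$ and $\widetilde E(\rho,\phi)<\widetilde E_{\min}(p)+\varepsilon$. Since $C_c^\infty(\R^2)$ is dense in $H^1(\R^2)$ and, by Proposition~\ref{anfunc}, $\nabla C_c^\infty(\R^2)$ is dense in $L^2_{\text{curl}}\ni\nabla\phi$, I would pick $\sigma_n\in 1+C_c^\infty(\R^2)$ and $\psi_n\in C_c^\infty(\R^2)$ with $\sigma_n-1\to\rho-1$ in $H^1$ and $\nabla\psi_n\to\nabla\phi$ in $L^2$.

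Next I would show $\widetilde E(\sigma_n,\psi_n)\to\widetilde E(\rho,\phi)$ and $P(\sigma_n,\psi_n)\to P(\rho,\phi)=p$. For $P$ and for $\int\widetilde G$ this is a routine $L^2$ estimate: one uses $\widetilde G(1)=0$ and $|\widetilde G'|\lesssim|\cdot-1|$ from \eqref{propgtilde}, so that $\widetilde G(\rho)-\widetilde G(\sigma_n)$ is pointwise controlled, up to a constant, by $(|\rho-1|+|\sigma_n-1|)\,|\rho-\sigma_n|$, together with the boundedness of $\|\nabla\psi_n\|_2$. The two Dirichlet-type terms $\int\chi(\sigma_n)|\nabla\psi_n|^2$ and $\int K(\chi(\sigma_n))|\nabla\sigma_n|^2$ are the only slightly delicate point, since one cannot pass to the limit in the coefficient and the gradient simultaneously: by \eqref{propchi} the functions $\chi$ and $K\circ\chi$ are bounded and Lipschitz on $\R$, hence $\chi(\sigma_n)$ and $K(\chi(\sigma_n))$ are uniformly bounded and converge in $L^2_{\text{loc}}$, so a.e. along a subsequence; combining this with the strong $L^2$ convergence of $\nabla\psi_n$ and $\nabla\sigma_n$ and dominated convergence (and the standard subsequence argument to promote this to convergence of the whole sequence) gives the claim. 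Write $p_n:=P(\sigma_n,\psi_n)\to p$.

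Finally I would correct $p_n$ to $p$ by scaling. If $p=0$ the constant sequence $(1,0)\in(1+C_c^\infty)\times C_c^\infty$ already works, since $\widetilde E(1,0)=0=\widetilde E_{\min}(0)$; so assume $p>0$, hence $p_n>0$ for $n$ large. For such $n$ set $\lambda_n:=p/p_n\to1$ and $(\rho_n,\phi_n):=\big(\sigma_n(\lambda_n^{-1}\,\cdot)\,,\,\psi_n(\lambda_n^{-1}\,\cdot)\big)\in(1+C_c^\infty)\times C_c^\infty$. In dimension $2$ the substitution $x\mapsto\lambda_n^{-1}x$ leaves the two Dirichlet-type terms of $\widetilde E$ unchanged, multiplies $\int\widetilde G$ by $\lambda_n^2$, and multiplies $P$ by $\lambda_n$; therefore $P(\rho_n,\phi_n)=\lambda_n p_n=p$ and
\[
\widetilde E(\rho_n,\phi_n)=\tfrac12\int\big(\chi(\sigma_n)|\nabla\psi_n|^2+K(\chi(\sigma_n))|\nabla\sigma_n|^2\big)\,dx+\lambda_n^2\int\widetilde G(\sigma_n)\,dx\ \longrightarrow\ \widetilde E(\rho,\phi).
\]
Hence $\limsup_n\widetilde E(\rho_n,\phi_n)\le\widetilde E_{\min}(p)+\varepsilon$ for every $\varepsilon>0$, and since $\widetilde E(\rho_n,\phi_n)\ge\widetilde E_{\min}(p)$ always, a diagonal extraction over $\varepsilon=1/k$ produces the desired minimising sequence in $(1+C_c^\infty(\R^2))\times C_c^\infty(\R^2)$.

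\textbf{Main obstacle.} Only the continuity of $\widetilde E$ under the smoothing requires care, precisely because the density-dependent coefficients $\chi(\rho)$ and $K(\chi(\rho))$ forbid a termwise limit; the rest is bookkeeping, the two structural facts being used being the scale invariance of the Dirichlet energy in dimension $2$ (which lets one repair the momentum at no cost in the gradient terms) and the quadratic vanishing of $\widetilde G$ at $\rho=1$.
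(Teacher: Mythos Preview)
Your argument is correct. The approximation step and the verification that $\widetilde E$ and $P$ pass to the limit are essentially identical to the paper's (density from Proposition~\ref{anfunc}, dominated convergence after an a.e.\ extraction on the coefficients $\chi(\sigma_n)$ and $K(\chi(\sigma_n))$). The genuine difference is in how you repair the momentum: the paper adds a small compactly supported bump $(\text{sign}(\varepsilon_n)\sqrt{|\varepsilon_n|}\,\partial_1\varphi,\ \sqrt{|\varepsilon_n|}\,\varphi)$ with support disjoint from that of $(\sigma_n-1,\psi_n)$, so that $P$ is corrected exactly and $\widetilde E$ changes by $O(\varepsilon_n)$; you instead rescale spatially by $\lambda_n=p/p_n$, exploiting that in dimension~$2$ the Dirichlet-type terms are conformally invariant while $P$ scales by $\lambda_n$ and $\int\widetilde G$ by $\lambda_n^2$. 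Your route is slicker here but is specific to $d=2$; the paper's additive correction is dimension-independent and does not require $p_n\neq 0$ along the sequence.
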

\begin{proof}
The case $p=0$ is obvious. For $p>0$ it suffices to prove that for any 
$(\rho-1,\nabla\phi)\in H^1\times L^2$, there exists $(\rho_n-1,\phi_n)\in (C_c^\infty(\R^2))^2$ 
such that $P(\rho_n,\phi_n)=p,\ \widetilde{E}(\rho_n,\phi_n)\rightarrow \widetilde{E}(\rho,\phi)$. 
By density (prop. \ref{anfunc}), there exists $(r_n-1, \nabla\psi_n)\in C_c^\infty (\R^2)$ 
such that 
\begin{equation*} 
\|r_{n}-\rho\|_{H^1}+\|\nabla\psi_{n}-\nabla \phi\|_{L^2}
\rightarrow_n 0,\ r_n\rightarrow 
\rho\ a.e.\ .
\end{equation*}
Clearly $P(r_n,\psi_n)\rightarrow p$, $\int_{\mathbb{\R}^2}\widetilde{G}(r_n)\rightarrow 
\int_{\R^2} \widetilde{G}(\rho)dx$, and up to an extraction such that $r_n\rightarrow \rho\ a.e.$ 
we have by dominated convergence
\begin{eqnarray*}
\int_{\R^2} \chi(r_n)|\nabla \psi_n|^2-\chi(\rho)|\nabla \phi|^2dx&=&
\int_{\R^2} (\chi(r_n)-\chi(\rho))|\nabla \phi|^2+\chi(r_n)
(|\nabla \phi|^2-
|\nabla \psi_n|^2)dx,\\
&\longrightarrow_n& 0,
\end{eqnarray*}
\begin{equation*}
\int_{R^2}K(\chi(r_n))|\nabla r_n|^2-K(\chi(\rho))|\nabla \rho|^2dx
\longrightarrow_n 0,
\end{equation*}
from which we deduce $E(r_n,\psi_n)-E(\rho,\phi)\rightarrow 0$.
Let $\varepsilon_n=p-P(r_n,\psi_n)$, we construct a slight modification 
$(\rho_n,\phi_n)$ of $(r_n,\psi_n)$ such that
$P(\rho_n,\phi_n)=p$\ :
let $\varphi\in C_c^\infty(\R^2)$, $A:=\partial_1\varphi$ with $\|\partial_1\varphi\|_2=1$.
Up to a translation (that depends on $n$), 
we can assume $\text{supp}(\varphi)\cap (\text{supp}(1-r_n)\cup \text{supp}
(\psi_n))=\emptyset$, and define
\begin{equation*}
\rho_n=r_n+\text{sign}(\varepsilon_n)\sqrt{|\varepsilon_n|}A,\ 
\phi_n=\psi_n+\sqrt{|\varepsilon_n|}\varphi.
\end{equation*}
We conclude
\begin{eqnarray*}
P(\rho_n,\phi_n)&=&P(r_n,\psi_n)+\varepsilon_n=p,\\ 
\widetilde{E}(\rho_n,\phi_n)&=&\widetilde{E}(r_n,\psi_n)+\frac{1}{2}\varepsilon_n
\int_{\R^2}K(\chi(\rho_n)|\nabla A|^2\\
&&\hspace{4cm}+\chi(\rho_n)|\nabla \varphi|^2+O(A^2)
\,dx\\
&=& \widetilde{E}(r_n,\psi_n)+O(\varepsilon_n)\longrightarrow_n \widetilde{E}(\rho,\phi).
\end{eqnarray*}
\end{proof}

\begin{prop}\label{concE}
The application $p\in \R^+\mapsto 
\widetilde{E}_{\text{min}}(p)$ is $1$-Lipschitz, concave, non decreasing.
\end{prop}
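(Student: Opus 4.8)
The plan is to treat the three assertions in increasing order of difficulty, concavity being the only substantial one. Throughout, by Lemma~\ref{smoothmin} we may always test with pairs in $(1+C^\infty_c(\R^2))\times C^\infty_c(\R^2)$, and we record at once the elementary facts: $(1,0)$ is admissible with $\widetilde E(1,0)=0$, while $\widetilde E\ge0$ on $\mathcal H$ since $\widetilde E(\rho,\phi)\gtrsim\|\rho-1\|_{H^1}^2+\|\nabla\phi\|_2^2$; hence $\widetilde E_{\min}(0)=0$ and $\widetilde E_{\min}\ge0$. For $p>0$, choosing $\varphi\in C^\infty_c(\R^2)$ with $\|\partial_1\varphi\|_2=1$, the pair $(1+\sqrt p\,\partial_1\varphi,\sqrt p\,\varphi)$ has momentum $p$ and finite energy, so $\widetilde E_{\min}(p)<\infty$.

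\emph{The $1$-Lipschitz bound.} I would obtain it by a far-field gluing with a cheap momentum-adjusting bump. Fix $p\ge0$ and a sign $\pm$; for $\delta>0$ small and $\Psi\in C^\infty_c(\R^2)$ with $\|\partial_1\Psi\|_2=1$, set $w:=(1\pm\sqrt\delta\,\partial_1\Psi,\ \sqrt\delta\,\Psi)$. Using $\chi(1)=1$ and $\widetilde G(1)=\widetilde G'(1)=0$, $\widetilde G''(1)=1$, a direct expansion gives
\[
P(w)=\pm\delta,\qquad \widetilde E(w)=\tfrac{\delta}{2}\bigl(\|\nabla\Psi\|_2^2+\|\partial_1\Psi\|_2^2+K(1)\|\nabla\partial_1\Psi\|_2^2\bigr)+O(\delta^{3/2}).
\]
Since $\|\nabla\Psi\|_2^2\ge\|\partial_1\Psi\|_2^2$, the bracket is $\ge2$, and it can be made as close to $2$ as wished by taking $\Psi$ nearly independent of $x_2$ and very spread out in $x_1$ (so that $\|\partial_2\Psi\|_2$ and $\|\nabla\partial_1\Psi\|_2$ tend to $0$ while $\|\partial_1\Psi\|_2=1$); hence $\widetilde E(w)\le(1+\eta)\delta$ for any prescribed $\eta>0$ once $\delta$ is small. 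Translating $w$ away from the supports of the origin and of a near-minimiser $(\rho,\phi)$ at level $p$, both $P$ and $\widetilde E$ are exactly additive on $(\rho,\phi)+w$, so $\widetilde E_{\min}(p\pm\delta)\le\widetilde E_{\min}(p)+(1+\eta)\delta$ for $\delta$ small. Thus $\widetilde E_{\min}$ is locally, hence globally, $(1+\eta)$-Lipschitz, and letting $\eta\to0$ it is $1$-Lipschitz, in particular continuous. (Gluing two near-minimisers in the same way yields, as a byproduct, the subadditivity $\widetilde E_{\min}(p+q)\le\widetilde E_{\min}(p)+\widetilde E_{\min}(q)$.)

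\emph{Concavity} is the crux and the step I expect to be the main obstacle. As $\widetilde E_{\min}$ is now continuous, it suffices to establish midpoint concavity, $\widetilde E_{\min}(p_1)+\widetilde E_{\min}(p_2)\le2\widetilde E_{\min}\bigl(\tfrac{p_1+p_2}{2}\bigr)$; equivalently, every admissible $(\rho,\phi)$ with $P(\rho,\phi)=p_0:=\tfrac{p_1+p_2}{2}$ should satisfy $2\widetilde E(\rho,\phi)\ge\widetilde E_{\min}(p_1)+\widetilde E_{\min}(p_2)-o(1)$. Perturbing $(\rho,\phi)$ by two far bumps of momenta $\pm\tfrac{p_2-p_1}{2}$ merely reproduces the Lipschitz bound, so one must exploit the structure of $\widetilde E$ and $P$. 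The natural tool is the family of anisotropic rescalings $(\rho,\phi)\mapsto\bigl(1+(\rho(\tfrac{x_1}{\tau},x_2)-1),\ \tau\,\phi(\tfrac{x_1}{\tau},x_2)\bigr)$, together with its mirror image in $x_2$: under the first, $P$ is multiplied by $\tau$ while
\[
\widetilde E=\tau\Bigl(\tfrac12\!\int\!\chi(\rho)(\partial_1\phi)^2+\tfrac12\!\int\!K(\chi(\rho))(\partial_2\rho)^2+\!\int\!\widetilde G(\rho)\Bigr)+\tfrac{\tau^3}{2}\!\int\!\chi(\rho)(\partial_2\phi)^2+\frac{1}{2\tau}\!\int\!K(\chi(\rho))(\partial_1\rho)^2 .
\]
Optimising over $\tau$ and invoking the Cauchy--Schwarz inequality $\widetilde E(\rho,\phi)\ge|P(\rho,\phi)|\,(1-o(1))$, valid near the constant state, one aims to write $\widetilde E_{\min}(p)$ as an infimum over competitors of functions of $p$ that are linear to leading order and carry only one-signed higher-order corrections, which is what forces concavity. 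The delicate points I foresee are: controlling uniformly the errors produced by the rescalings (in the spirit of the bootstrap of Proposition~\ref{mainest}); ensuring that no wrong-signed term enters the coefficient of $p$; and, near $p=0$, pinning down the size and sign of the cubic correction — it is here that the sharp expansion $\widetilde E_{\min}(p)=p-\Theta(p^3)$ of Section~\ref{poho}, hence the assumption $\Gamma\neq0$, eventually comes into play.

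Finally, monotonicity follows from concavity: a concave function on $[0,\infty)$ bounded below cannot decrease, for if $\widetilde E_{\min}(q)<\widetilde E_{\min}(p)$ with $p<q$, concavity would force $\widetilde E_{\min}(s)\le\widetilde E_{\min}(q)+\frac{\widetilde E_{\min}(q)-\widetilde E_{\min}(p)}{q-p}(s-q)\to-\infty$ as $s\to\infty$, contradicting $\widetilde E_{\min}\ge0$. Hence $\widetilde E_{\min}$ is non-decreasing, which completes the proof.
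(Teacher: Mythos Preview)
Your Lipschitz and monotonicity arguments are fine and match the paper's in spirit. The genuine gap is in the concavity step: what you have written there is not a proof, and the rescaling route you sketch does not lead to concavity. Under your dilation the rescaled energy is $\tau A+\tau^3 B+\tau^{-1}C$ with $A,B,C\ge 0$, which is a \emph{convex} function of $\tau$; the resulting family of upper bounds $\widetilde E_{\min}(\tau p)\le \tau A+\tau^3 B+\tau^{-1}C$ says nothing about concavity of $\widetilde E_{\min}$ (an infimum of convex functions need not be concave). Your hedging paragraph (``delicate points I foresee\ldots'') confirms you have not closed this. Also, your suggestion that $\Gamma\neq 0$ or the cubic expansion of Section~\ref{poho} enters here is mistaken: concavity is unconditional and is proved before any of that.

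The mechanism you are missing is a \emph{reflection} (Steiner-type) argument, and it is short. Given $p_1<p_2$ and $\varepsilon>0$, pick a near-minimiser $(\rho,\phi)$ at level $p_0=(p_1+p_2)/2$. For $a\in\R$ let $T_a^+f$ (resp.\ $T_a^-f$) be the even reflection of $f$ across the line $\{x_2=a\}$ that keeps the values on $\{x_2>a\}$ (resp.\ $\{x_2<a\}$). Since the energy and momentum densities do not involve $x_2$ explicitly, for any local functional $F$ one has
\[
\int_{\R^2} F\bigl(T_a^+(\rho,\phi)\bigr)+\int_{\R^2} F\bigl(T_a^-(\rho,\phi)\bigr)=2\int_{\R^2} F(\rho,\phi).
\]
The map $a\mapsto P\bigl(T_a^+(\rho,\phi)\bigr)$ is continuous, tends to $0$ as $a\to+\infty$ and to $2p_0=p_1+p_2$ as $a\to-\infty$, so some $a_1$ gives $P(T_{a_1}^+(\rho,\phi))=p_1$; then automatically $P(T_{a_1}^-(\rho,\phi))=p_2$. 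Testing these two competitors yields
\[
\widetilde E_{\min}(p_1)+\widetilde E_{\min}(p_2)\le \widetilde E\bigl(T_{a_1}^+(\rho,\phi)\bigr)+\widetilde E\bigl(T_{a_1}^-(\rho,\phi)\bigr)=2\widetilde E(\rho,\phi)\le 2\widetilde E_{\min}(p_0)+2\varepsilon,
\]
hence midpoint concavity, and by your continuity step full concavity. No smallness, no $\Gamma$, no Section~\ref{poho} are needed.
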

\begin{proof}We split the proof in three steps:
\paragraph{$\widetilde{E}_{\text{min}}$ is Lipschitz}
Let $p<q$, $\delta>0$ to be fixed, according to lemma $\ref{smoothmin}$, there exists 
$(\rho-1,\phi)\in (C_c^\infty)^2$ such that $P(\rho,\phi)=p$, $\widetilde{E}(\rho,\phi)\leq 
\widetilde{E}_{\text{min}}(p)+\delta$. Combining proposition $\ref{estimenergie}$ and lemma 
$\ref{smoothmin}$ there exists 
$(\rho_0-1,\phi_0)\in (C_c^\infty(\R^2))^2$ such that $P(\rho_0,\phi_0)=q-p$, 
$\widetilde{E}(\rho_0,\phi_0)\leq q-p$. Up to a translation, we can assume $(\rho_0-1,\phi_0)$ 
have disjoint support with $(\rho-1,\phi)$, so that 
\begin{equation*}
P(\rho+\rho_0-1,\phi+\phi_0)=p+q-p=q,\
 \widetilde{E}(\rho+\rho_0-1,\phi+\phi_0)\leq \widetilde{E}_{\text{min}}(p)+\delta+q-p.
\end{equation*}
Since $\delta$ is arbitrary, $\widetilde{E}_{\text{min}}(q)-\widetilde{E}_{\text{min}}(p)\leq q-p$.
The reverse inequality can be obtained with a similar argument (using $-\phi_0$ instead of 
$\phi_0$).
\paragraph{$\widetilde{E}_{\text{min}}$ is concave} Since $\widetilde{E}_{\text{min}}$ is 
continuous, it suffices to prove 
that for any $p_1<p_2\in [0,p_0]$, $\widetilde{E}_{\text{min}}((p_1+p_2)/2)\geq
\frac{\widetilde{E}_{\text{min}}(p_1)+\widetilde{E}_{\text{min}}(p_2)}{2}$. This relies on a classical reflection argument. 
For $f$ defined on $\R^2$, we define $T_a^+(f)$ (resp $T_a^-f$) as the function symmetric with 
respect to the line $x_2=a$ and that coincides with $f$ on $x_2>a$ (resp. $x_2<a$). The maps 
$T_a^+,T_a^-$ are linear continuous $H^1\rightarrow H^1$, and from Lebesgue's dominated 
convergence theorem $T_a^+\rightarrow_{+\infty} 0,\ T_a^-\rightarrow_{-\infty}0$, $a\mapsto 
T_a^\pm$ is continuous. This also implies
$$
\|T_a^+f\|_{L^2}\rightarrow_{-\infty}2\|f\|_{L^2},\ 
\|\nabla T_a^+f\|_{L^2}\rightarrow_{-\infty}2\|\nabla f\|_{L^2},
$$
and the symmetric property for $T_a^-$. We also note that for any function $F$, as soon as the 
integrals make sense
\begin{equation}\label{sympratique}
\int_{\R^2}F(T_a^+f)+F(T_a^-f)dx=2\int_{\R^2}F(f)dx. 
\end{equation}
Now let $\delta>0$, $(\rho,\phi)$ be such that 
$P(\rho,\phi)=\frac{p_1+p_2}{2},\ \widetilde{E}(\rho,\phi)\leq \widetilde{E}_{\text{min}}\bigg(\frac{p_1+p_2}{2}\bigg)
+\delta$. Since 
$\displaystyle \lim_{+\infty}\|P(T_a^+(\rho,\phi))\|_2=0<p_1$, there exists $a_1$ such that 
$P(T_{a_1}^+(\rho,\phi))=p_1$, and from \eqref{sympratique}, $P(T_{a_1}^-(\rho,\phi))=p_2$. Then 
using again \eqref{sympratique}
\begin{equation*}
\widetilde{E}_{\text{min}}(p_1)+\widetilde{E}_{\text{min}}(p_2)\leq \widetilde{E}\big(T_{a_1}^+(\rho,\phi)\big)
+\widetilde{E}\big(T_{a_1}^-(\rho,\phi)\big)\leq 2\widetilde{E}_{\text{min}}\bigg(\frac{p_1+p_2}{2}\bigg)+2\delta.
\end{equation*}
Since $\delta$ is arbitrary, we get $\widetilde{E}_{\text{min}}((p_1+p_2)/2)\geq \frac{\widetilde{E}_{\text{min}}(p_1)+
\widetilde{E}_{\text{min}}(p_2)}{2}$.
\paragraph{$\wE_{\text{min}}(p)$ is non decreasing} Obvious since it is concave and nonnegative.
\end{proof}

The next proposition gives a sharp upper bound for $\widetilde{E}_{\min}$.
\begin{prop}\label{estimenergie}
There exists $p_0>0,\ \alpha>0$ such that
\begin{equation*}
\forall\,0<p<p_0,\ \exists\,(\rho_p,\phi_p)\in \mathcal{H}:\ P(\rho_p,\phi_p)=p,\ 
\widetilde{E}(\rho_p,\phi_p)\leq p-\alpha p^3.
\end{equation*}
In particular $\widetilde{E}_{\text{min}}(p)\leq p-\alpha p^3$. Moreover, up to taking a smaller 
$p_0$ if $(\rho,\phi)$ is a minimiser, then $\|\rho-1\|_\infty \gtrsim p^2$.
\end{prop}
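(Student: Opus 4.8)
The plan is to produce, for every small $p>0$, an explicit competitor $(\rho_p,\phi_p)\in\mathcal H$ with $P(\rho_p,\phi_p)=p$ built from the long--wave (Kadomtsev--Petviashvili) scaling, and then to tune a fixed profile — this is where $\Gamma\neq0$ enters — so that its modified energy beats the linear value $p$ by a strictly negative cubic term. Work in the rescaled variables, so $c_s=1$. Fix $\psi\in C_c^\infty(\R^2)$, to be chosen below, set $N=\partial_1\psi$, and for $\varepsilon>0$ small let
\[
\rho_\varepsilon(x)=1+\varepsilon^2 N(\varepsilon x_1,\varepsilon^2 x_2),\qquad
\phi_\varepsilon(x)=\varepsilon\,\psi(\varepsilon x_1,\varepsilon^2 x_2).
\]
Then $(\rho_\varepsilon-1,\nabla\phi_\varepsilon)\in (C_c^\infty(\R^2))^2\subset H^1\times L^2$, so $(\rho_\varepsilon,\phi_\varepsilon)\in\mathcal H$. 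Moreover $\partial_1\phi_\varepsilon=\rho_\varepsilon-1$ identically, and the change of variables $y_1=\varepsilon x_1$, $y_2=\varepsilon^2 x_2$ (which turns $dx$ into $\varepsilon^{-3}dy$) gives the \emph{exact} identity $P(\rho_\varepsilon,\phi_\varepsilon)=\int_{\R^2}(\rho_\varepsilon-1)^2\,dx=\varepsilon\,\|N\|_{L^2}^2$; choosing $\varepsilon=p/\|N\|_{L^2}^2$ realizes $P=p$.

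Next I would expand the modified energy. As long as $\|\rho-1\|_\infty<1/3$ one has $\chi(\rho)=\rho$, $\widetilde G=G$, and completing the square in the cross term of $\widetilde E-P$ yields the identity
\[
\widetilde E(\rho,\phi)-P(\rho,\phi)=\int_{\R^2}\tfrac12\big(\partial_1\phi-(\rho-1)\big)^2+\tfrac12(\partial_2\phi)^2+\tfrac12(\rho-1)|\nabla\phi|^2+\tfrac12 K(\rho)|\nabla\rho|^2+\big(G(\rho)-\tfrac12(\rho-1)^2\big)\,dx.
\]
For the competitor the first term vanishes; inserting the ansatz, using $G(\rho)-\tfrac12(\rho-1)^2=\tfrac{g''(1)}{6}(\rho-1)^3+O((\rho-1)^4)$, and tracking powers of $\varepsilon$, every remaining term has size $\varepsilon^3$ and
\[
\widetilde E(\rho_\varepsilon,\phi_\varepsilon)-P(\rho_\varepsilon,\phi_\varepsilon)=\varepsilon^3\Big(\tfrac12\|\partial_2\psi\|_{L^2}^2+\tfrac{K(1)}{2}\|\partial_1 N\|_{L^2}^2+\tfrac{\Gamma}{6}\int_{\R^2}N^3\Big)+O(\varepsilon^5),
\]
where $\Gamma$ is the constant from the statement (equal to $3+g''(1)$ in the rescaled variables): the coefficient $\tfrac12$ of $\int N^3$ comes from $\tfrac12(\rho-1)|\nabla\phi|^2$, the coefficient $\tfrac{g''(1)}{6}$ from the cubic term of $G$, and $\tfrac12+\tfrac{g''(1)}{6}=\tfrac{\Gamma}{6}$.

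Since $\Gamma\neq0$, I would choose $\psi$ with $\int_{\R^2}(\partial_1\psi)^3\neq0$ (a sufficiently asymmetric bump, e.g.\ a product $f(z_1)h(z_2)$ with $\int (f')^3\neq0$), replace $\psi$ by $-\psi$ if needed so that $\Gamma\int(\partial_1\psi)^3<0$, and then replace $\psi$ by $\lambda\psi$ with $\lambda$ large but \emph{fixed}: the first two terms of the bracket scale like $\lambda^2$ and the cubic one like $\lambda^3$, so for $\lambda$ large the bracket equals some $-\kappa<0$. With this profile and $\varepsilon=p/\|N\|_{L^2}^2$, the pair $(\rho_p,\phi_p):=(\rho_\varepsilon,\phi_\varepsilon)$ satisfies $P(\rho_p,\phi_p)=p$ and $\widetilde E(\rho_p,\phi_p)=p-\kappa\|N\|_{L^2}^{-6}\,p^3+O(p^5)\leq p-\alpha p^3$ for $p<p_0$ small; in particular $\widetilde E_{\min}(p)\leq p-\alpha p^3$. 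I expect the profile step — the one that forces $\Gamma\neq0$ — to be the only genuine obstacle; the $\varepsilon$-expansion is routine bookkeeping, and the construction is the Euler--Korteweg analogue of the test functions used in \cite{BGS}.

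Finally, for $\|\rho-1\|_\infty\gtrsim p^2$: if $\|\rho-1\|_\infty$ is not below a fixed small constant (in particular if it is infinite) the claim is trivial once $p_0$ is small. Otherwise $\chi(\rho)=\rho$, and discarding the three nonnegative terms in the identity above while estimating $|G(\rho)-\tfrac12(\rho-1)^2|\lesssim|\rho-1|^3$ gives $\widetilde E(\rho,\phi)\geq P(\rho,\phi)-C\|\rho-1\|_\infty\big(\|\rho-1\|_{L^2}^2+\|\nabla\phi\|_{L^2}^2\big)$. For a minimizer with $P=p$, the coercivity $\widetilde E(\rho,\phi)\gtrsim\|\rho-1\|_{H^1}^2+\|\nabla\phi\|_{L^2}^2$ together with the upper bound just proved yields $p-\alpha p^3\geq\widetilde E(\rho,\phi)\geq p-C'\|\rho-1\|_\infty\,p$, whence $\|\rho-1\|_\infty\geq(\alpha/C')\,p^2$.
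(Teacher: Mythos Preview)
Your proof is correct and takes a genuinely different, more elementary route than the paper for the upper bound. The paper inserts the same KP ansatz into the energy but chooses the profile $A$ to be (a rescaling of) an actual KP--I soliton from de Bouard--Saut \cite{BouardSaut}; negativity of the $\varepsilon^3$--coefficient is then inherited from the Pohozaev identity $E_{KP}(w)=-\tfrac16\int w^2<0$ satisfied by KP ground states. You instead take an arbitrary $C_c^\infty$ profile and force the $\varepsilon^3$--bracket negative by the amplitude rescaling $\psi\mapsto\lambda\psi$, which beats the two $\lambda^2$ terms by the $\lambda^3$ one as soon as $\Gamma\int(\partial_1\psi)^3<0$; this makes the role of the hypothesis $\Gamma\neq0$ completely explicit. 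Your argument avoids any appeal to KP theory, at the price of giving up the (heuristic) link with the transonic limit that the paper's choice of competitor advertises. For the second claim, your completing--the--square identity yields the same conclusion as the paper's bound via $\inf\sqrt{\chi}$, arguably more cleanly; both are correct.
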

\begin{proof}
The idea is to construct an approximate minimizer by using the following formal asymptotic 
(rigorously justified for the Gross-Pitaevskii 
equation \cite{BGS2}): set $\rho=1+\varepsilon^2A_\varepsilon(z_1,
z_2),\ \phi=\varepsilon\varphi_\varepsilon(z_1,z_2)$, $z_1=\varepsilon x_1,\ z_2=\varepsilon^2x_2$.
If $(\rho,\phi)$ is a solution of \eqref{TWEK} with speed $c=\sqrt{1-\varepsilon^2}$, the mass 
conservation reads
\begin{eqnarray*}
-c\partial_1A_\varepsilon+\partial_1^2\varphi_\varepsilon
+\varepsilon^2(\partial_2^2\varphi_\varepsilon+A_\varepsilon\partial_1^2\varphi_\varepsilon
+\partial_1A_\varepsilon\partial_1\varphi_\varepsilon)=O(\varepsilon^4).
\end{eqnarray*}
Next using Taylor's expansion $\widetilde{g}=\varepsilon^2A_{\varepsilon}+g''(1)\varepsilon^4
\frac{A_\varepsilon^2}{2}+O(\varepsilon^4)$, the momentum equation gives 
\begin{eqnarray*}
-c\partial_1\varphi_\varepsilon+A_\varepsilon+\varepsilon^2\bigg(\frac{g''(1)A_\varepsilon^2+
(\partial_1\varphi_\varepsilon)^2}{2}
-K(1)\partial_1^2A_\varepsilon\bigg)=O(\varepsilon^4).
\end{eqnarray*}
At first order, we have $\dun\phi_\varepsilon=A_\varepsilon+O(\varepsilon^2)$, next if we 
multiply the mass equation by $c$, apply $\dun$ to the momentum equation and add them, we get
\begin{eqnarray}
\label{kpasymp}
\dun A_\varepsilon+\partial_2^2\dun^{-1}A_\varepsilon+(3+g''(1))A_\varepsilon\dun A_\varepsilon
-K(1)\dun^3A_\varepsilon=O(\varepsilon^2),
\end{eqnarray}
Note that $\gamma:=3+g''(1)$ is the rescaled version of $\Gamma=3+\rho_0g''(\rho_0)/g'(\rho_0)$ 
thus by assumption $\gamma\neq 0$.
\eqref{kpasymp} is a KP1 type equation, the normalized KP1 equation is 
\begin{equation}
\label{kpadim}
\partial_1w+w\dun w+\partial_2^2\dun^{-1}w-\dun^3w=0
\end{equation}
One can pass from a solution of \eqref{kpadim} to a solution of \eqref{kpasymp} by setting 
\begin{equation}\label{corresp}
A=\frac{1}{\gamma}w(x_1/\sqrt{K(1)},x_2/\sqrt{K(1)}).
\end{equation}
In \cite{BouardSaut}, solutions of the KP equation are constructed, any such solution satisfy
\begin{equation}\label{energie}
E_{KP}(w):=
\frac{1}{2}\int_{\R^2} |\partial_2\dun^{-1}w|^2+w^3/3+|\dun w|^2dx<\infty,
\end{equation}
Moreover, such solutions are smooth, belong 
to $L^q$ for any $1<q\leq \infty$ as well as their gradients, there exists a smooth $v\in L^p$ 
for any $2<p\leq \infty$ such that $\dun v=w$, $\nabla v\in L^q$, and 
$E_{KP}(w)=-\int_{\R^2} w^2/6<0$ (see \cite{BouardSaut} or \cite{BGS} p.41). Let $w$ be such a solution
\footnote{some optimization can be done by choosing a so-called ground state, but it is not really useful here.}, 
we define $A$ by \eqref{corresp}, and set $\rho=1+\varepsilon^2A(\varepsilon x_1,\varepsilon^2x_2)$, 
$\phi=\varepsilon\dun^{-1} A(\varepsilon x_1,\varepsilon^2x_2)$. Since $A$ is bounded, 
$|\rho-1|=O(\varepsilon^2)$ so that $E$ and $\widetilde{E}$ coincide for $\varepsilon$ small 
enough. We have $E_{\widetilde{KP}}(A)=\frac{K(1)}{\gamma^2}E_{KP}(w)<0$, and basic computations give
\begin{equation*}
P(\rho,\phi)=\varepsilon^4\int A^2(\varepsilon x_1,\varepsilon^2x_2)dx=\varepsilon\|A\|_2^2,
\end{equation*}
\begin{eqnarray*}
\widetilde{E}(\rho,\phi)&=&\frac{1}{2}\int_{\R^2}(1+\varepsilon^2A)\big(\varepsilon^4A^2
+\varepsilon^6(\partial_2\dun^{-1}A)^2\big)+K(1+\varepsilon^2A)
\big(\varepsilon^6(\dun A)^2+\varepsilon^8(\partial_2A)^2\big)\\
&&+\varepsilon^4A^2(\varepsilon x_1,\varepsilon^2x_2)+(2G(1+\varepsilon^2A)-\varepsilon^4A^2)dx\\
&\leq &\varepsilon\|A\|_2^2+\frac{\varepsilon^3}{2}\int_{\R^2}(\partial_2\dun^{-1}A)^2+A^3
+K(1)(\dun A)^2(z_1,z_2)+\frac{(\max \widetilde{G}''')A^3}{3} dz+R
\end{eqnarray*}
where $\displaystyle R=\frac{\varepsilon^5}{2}\int A(\partial_2\dun^{-1}A)^2
+\frac{K(1+\varepsilon^2A)-1}{\varepsilon^2}(\dun A)^2+K(1+\varepsilon^2A)(\partial_2A)^2dx
=O(\varepsilon^5)$. As a consequence by definition of $\widetilde{E}_{\text{min}}(p)$
\begin{equation*}
\widetilde{E}_{\text{min}}(\varepsilon\|A\|_2^2)\leq \widetilde{E}(\rho_\varepsilon,\phi_\varepsilon)
\leq \varepsilon\|A\|_2^2+\varepsilon^3E_{\widetilde{KP}}(A)+C\varepsilon^5,
\end{equation*}
taking $\varepsilon\leq \sqrt{-E_{\widetilde{KP}}/(2C)}$ completes the first part of the proof.\\
Now if $(\rho,\phi)$ is a minimiser, from $\int_{\R^2} \widetilde{G}(\rho)=
\big(1+O(\|\rho-1)\|_\infty)\big)\int_{\R^2} \frac{(\rho-1)^2}{2}dx$ we have
\begin{eqnarray*}
p=\int_{\R^2}(\rho-1)\dun \phi\leq \frac{1}{\displaystyle \inf_{\R^2}\sqrt{\chi(\rho)}}
\int_{\R^2} \frac{(\rho-1)^2
+\chi|\nabla \phi|^2}{2}dx
&\leq& \frac{\big(1+O(\|\rho-1)\|_\infty)\big)\widetilde{E}_{\min}(p)}
{\inf \sqrt{\chi}}\\
&\leq & \frac{\big(1+O(\|\rho-1)\|_\infty)\big)(p-\alpha p^3)}
{\inf \sqrt{\chi}}.
\end{eqnarray*}
There are two possibilities: 
\begin{itemize}
 \item if $\inf \sqrt{\chi}\leq 1-\alpha p^2/2$, then $\inf \sqrt{\rho}
\leq 1-\alpha p^2$
\item else $1+O(\|\rho-1\|_\infty)\geq \inf \sqrt{\chi}/(1-\alpha p^2)\geq 
1+\alpha p^2/2+O(p^4)$, then $\|\rho-1\|_\infty\gtrsim p^2$.
\end{itemize}
\end{proof}

As pointed out in the introduction, rather than concavity we will use subadditivity:
\begin{prop}\label{posD}
The application $\wE_{\min}:\R^+\rightarrow \R^+$ satisfies the following properties: 
\begin{enumerate}
\item it is differentiable at $p=0$ and $\wE_{\min}'(0)=1$.
\item it is strictly subadditive : $\forall\,0<p_1,p_2,\ \wE_{\min}(p_1+p_2)<\wE_{\min}(p_1)
+\wE_{\min}(p_2)$.
\item the application 
$$(p_1,p_2)\in (\R^+)^2\rightarrow D(p_1,p_2):=\widetilde{E}_{\min}(p_1)
+\widetilde{E}_{\min}(p_2)-\widetilde{E}_{\min}(p_1+p_2)$$ 
is nonnegative and nondecreasing in both $p_1$ and $p_2$. Moreover 
\begin{equation}
p_1,\ p_2>0\Rightarrow D(p_1,p_2)>0.
\end{equation}
\end{enumerate}
\end{prop}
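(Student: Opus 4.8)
The plan is to take item (1) as the crux and to deduce items (2) and (3) from it together with the concavity of $\widetilde{E}_{\min}$ (Proposition~\ref{concE}) and the strict inequality $\widetilde{E}_{\min}(p)<p$ for $p>0$, itself a consequence of Proposition~\ref{estimenergie}. For (1) the upper bound $\widetilde{E}_{\min}(p)\le p$ is free (it is already in Proposition~\ref{estimenergie}); what must be shown is the matching lower bound $\widetilde{E}_{\min}(p)\ge p-o(p)$ as $p\to0^+$, after which differentiability at $0$ with slope $1$ is immediate from concavity.

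To obtain the lower bound, fix $p>0$ small and let $(\rho_n,\phi_n)$ be a minimising sequence for $\widetilde{E}_{\min}(p)$. Since $\widetilde{E}(\rho_n,\phi_n)\to\widetilde{E}_{\min}(p)\le p-\alpha p^3$ by Proposition~\ref{estimenergie}, and $\widetilde{E}(\rho,\phi)\gtrsim\|\rho-1\|_{H^1}^2+\|\nabla\phi\|_2^2$, we have $\|\rho_n-1\|_{H^1}\lesssim\sqrt p$ for $n$ large. Writing $a=\rho_n-1$ and $\chi=\chi(\rho_n)\ge1/2$, the pointwise inequality $|a\,\partial_1\phi_n|\le\frac{a^2}{2\chi}+\frac{\chi|\nabla\phi_n|^2}{2}$ gives $p\le\frac12\int\frac{a^2}{\chi}+\frac12\int\chi|\nabla\phi_n|^2$, while dropping the nonnegative capillary term yields $\widetilde{E}(\rho_n,\phi_n)\ge\frac12\int\chi|\nabla\phi_n|^2+\int\widetilde{G}(\rho_n)$; subtracting, the quadratic-in-$\phi$ terms cancel and
\[
p-\widetilde{E}(\rho_n,\phi_n)\ \le\ \int_{\R^2}\Big(\frac{(\rho_n-1)^2}{2\chi(\rho_n)}-\widetilde{G}(\rho_n)\Big)\,dx .
\]
Now $\widetilde{G}''(1)=1$ and $\chi(\rho)=\rho$ near $\rho=1$, so $\frac{(\rho-1)^2}{2\chi(\rho)}-\widetilde{G}(\rho)$ vanishes to third order at $\rho=1$, while $\frac{(\rho-1)^2}{2\chi(\rho)}-\widetilde{G}(\rho)\le(\rho-1)^2$ globally (using $\chi\ge1/2$ and $\widetilde{G}\ge0$). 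Hence the integrand is $\lesssim|\rho_n-1|^3$ where $|\rho_n-1|$ is small and $\lesssim(\rho_n-1)^2\lesssim(\rho_n-1)^4$ elsewhere, so it integrates to $\lesssim\|\rho_n-1\|_{L^3}^3+\|\rho_n-1\|_{L^4}^4\lesssim\|\rho_n-1\|_{H^1}^3+\|\rho_n-1\|_{H^1}^4\lesssim p^{3/2}$ by the Sobolev embeddings $H^1(\R^2)\hookrightarrow L^q$, $q<\infty$. Letting $n\to\infty$ gives $\widetilde{E}_{\min}(p)\ge p-Cp^{3/2}$. Together with $\widetilde{E}_{\min}(p)\le p$ this forces $\widetilde{E}_{\min}(p)/p\to1$ as $p\to0^+$, and since $\widetilde{E}_{\min}$ is concave on $\R^+$ with $\widetilde{E}_{\min}(0)=0$, its right derivative at $0$ equals this limit; this proves (1).

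For (2), I would first record that $\widetilde{E}_{\min}(p)<p$ for every $p>0$: this is Proposition~\ref{estimenergie} for $p<p_0$, and for $p\ge p_0$ it follows because, by concavity and $\widetilde{E}_{\min}(0)=0$, the chord slope $\widetilde{E}_{\min}(p)/p$ is nonincreasing, hence $\le\widetilde{E}_{\min}(p_0/2)/(p_0/2)<1$. Suppose now $\widetilde{E}_{\min}(p_1+p_2)=\widetilde{E}_{\min}(p_1)+\widetilde{E}_{\min}(p_2)$ with $p_1,p_2>0$. Concavity gives $\widetilde{E}_{\min}(p_i)\ge\frac{p_i}{p_1+p_2}\widetilde{E}_{\min}(p_1+p_2)$ for $i=1,2$; summing, equality must hold in both, so $\widetilde{E}_{\min}$ touches, at the interior point $p_1$, the chord joining $(0,0)$ to $(p_1+p_2,\widetilde{E}_{\min}(p_1+p_2))$, and a concave function touching a chord at an interior point coincides with it on the whole interval. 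That chord then has slope $\widetilde{E}_{\min}'(0)=1$, whence $\widetilde{E}_{\min}(p_1+p_2)=p_1+p_2$, contradicting $\widetilde{E}_{\min}(p_1+p_2)<p_1+p_2$. This is strict subadditivity, i.e. (2), and also the strict part of (3).

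Finally, in (3) the inequality $D\ge0$ is the (non-strict) subadditivity, immediate from concavity as above, and the monotonicity of $D$ in each variable uses the standard fact that for concave $f$ the map $x\mapsto f(x+h)-f(x)$ is nonincreasing: for $q_1\ge p_1$,
\[
D(q_1,p_2)-D(p_1,p_2)=\big(\widetilde{E}_{\min}(q_1)-\widetilde{E}_{\min}(p_1)\big)-\big(\widetilde{E}_{\min}(q_1+p_2)-\widetilde{E}_{\min}(p_1+p_2)\big)\ \ge\ 0,
\]
and symmetrically in $p_2$. The only genuinely delicate point is the sharp lower bound in (1): obtaining the constant exactly $1$ — rather than merely $\widetilde{E}_{\min}(p)\gtrsim p$ — requires both the second-order matching of $\widetilde{G}(\rho)$ with $(\rho-1)^2/(2\chi(\rho))$ at $\rho=1$ and the a priori $H^1$-smallness of minimising sequences, the latter being itself a consequence of the sharp upper bound of Proposition~\ref{estimenergie}; everything in (2) and (3) is then soft, with concavity doing all the work.
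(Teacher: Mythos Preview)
Your proof is correct and follows essentially the same route as the paper's: for (1) you use the Young inequality $|(\rho-1)\partial_1\phi|\le\frac{(\rho-1)^2}{2\chi}+\frac{\chi|\nabla\phi|^2}{2}$, the second-order matching of $\widetilde G$ and $\frac{(\rho-1)^2}{2\chi}$ at $\rho=1$, and the Sobolev embedding $H^1(\R^2)\hookrightarrow L^q$ to get $\widetilde E_{\min}(p)\ge p-Cp^{3/2}$; (2) is then a pure concavity argument (the paper phrases it with difference quotients, you with the chord-touching principle, but these are interchangeable), and (3) is the same. One small point where you are actually more careful than the paper: the estimate $\widetilde G(\rho)-\frac{(\rho-1)^2}{2\chi(\rho)}=O((\rho-1)^3)$ is only local near $\rho=1$ (for $\rho\le 1/2$ the difference is genuinely of order $(\rho-1)^2$), and your splitting into the regions $|\rho_n-1|$ small/large, bounding the far contribution by $\|\rho_n-1\|_{L^4}^4$, cleanly closes this gap.
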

\begin{proof}
\textbf{1.} From proposition \ref{estimenergie} we have 
$\overline{\lim}_0\frac{\wE_{\min}(p)}{p}\leq 1$. Conversely, consider a sequence $p_n\rightarrow 0$, 
and pick approximate minimizers $(\rho_n,\phi_n)$ such that 
$$\forall\,n\geq 1,\ P(\rho_n,\phi_n)=p_n,\ \wE(\rho_n,\phi_n)\leq \wE_{\min}(p_n)(1+1/n).$$
Then $\|\rho_n-1\|_{H^1}^2+\|\nabla \phi_n\|_{L^2}^2\sim \wE(\rho_n,\phi)\longrightarrow_n 0$ 
and from Young's inequality
\begin{eqnarray*}
p_n=P(\rho_n,\phi_n)\leq \int_{\R^2}\frac{(\rho_n-1)^2}{2\chi(\rho_n)}
+\frac{\chi|\nabla \phi_n|^2}{2}dx
\end{eqnarray*}
Since $\widetilde{G}''(1)=1,\ \widetilde{G}'(1)=\widetilde{G}(1)=0$ and 
$\widetilde{G}=O(\rho-1)^2$, from Taylor's expansion we have $\widetilde{G}(\rho)
-\frac{(\rho-1)^2}{2\chi}=O(\rho-1)^3$. Combining it with Sobolev's embedding $H^1\hookrightarrow L^3$
\begin{eqnarray*}
p_n\leq \int_{\R^2}\widetilde{G}(\rho_n)
+\frac{\chi|\nabla \phi_n|^2}{2}+O(\rho_n-1)^3dx&\leq& \widetilde{E}(\rho_n,\phi_n)+
O(\widetilde{E}(\rho_n,\phi_n))^{3/2}\\
&\leq& \wE_{\min}(p_n)(1+O(\wE_{\min}(p_n))^{1/2})(1+1/n)
\end{eqnarray*}
This readily implies $\underline{\lim}_n\frac{\wE_{\min}(p_n)}{p_n}\geq 1$, and 1) is thus true.
\vspace{1mm}\\
\textbf{2.} This is a basic concavity argument. First we remark that $\wE_{\min}$ is not 
linear on any interval $[0,p]$, since $\wE_{\min}'(0)=1$ and $\wE_{\min}(p)<p$.
Assume there exists $p_1\leq p_2$ such that 
$\widetilde{E}_{\min}(p_1+p_2)=\widetilde{E}_{\min}(p_1)+\widetilde{E}_{\min}(p_2)$. 
For a unified treatment, if $p_1=p_2$ we write 
$\displaystyle \frac{\wE_{\min}(p_2)-\wE_{\min}(p_1)}{p_2-p_1}$ for the right derivative of 
$\wE_{\min}$. By concavity and using $\widetilde{E}_{\min}(0)=0$
\begin{equation*}
\frac{\widetilde{E}_{\min}(p_1)}{p_1}\geq
\frac{\widetilde{E}_{\min}(p_2)-\widetilde{E}_{\min}(p_1)}{p_2-p_1}\geq 
\frac{\widetilde{E}_{\min}(p_1+p_2)-\widetilde{E}_{\min}(p_2)}{p_1+p_2-p_2}=
\frac{\widetilde{E}_{\min}(p_1)}{p_1}.
\end{equation*}
Therefore for any $p\in [p_1,p_1+p_2]$, $\frac{\widetilde{E}_{\min}(p)
-\widetilde{E}_{\min}(p_1)}{p-p_1}=\frac{\widetilde{E}_{\min}(p_1}{p_1}\Leftrightarrow 
\widetilde{E}_{\min}(p)=\frac{\widetilde{E}_{\min}(p_1)}{p_1}p.$ Also 
\begin{eqnarray*}
\forall\,p\in [0,p_1],\  \frac{\widetilde{E}_{\min}(p_1)}{p_1}
=\frac{\widetilde{E}_{\min}(p_1+p_2)-\widetilde{E}_{\min}(p_1)}{p_1+p_2-p_1}
\leq \frac{\widetilde{E}_{\min}(p_1)-\widetilde{E}_{\min}(p)}{p_1-p}\leq 
\frac{\widetilde{E}_{\min}(p_1)}{p_1} \\
\Rightarrow \frac{\widetilde{E}_{\min}(p_1)}{p_1}(p)=\frac{\widetilde{E}_{\min}(p_1)}{p_1}p,
\end{eqnarray*}
Hence, $\widetilde{E}_{\min}$ is linear on $[0,p_1+p_2]$, this is a contradiction.\vspace{1mm}\\
\textbf{3.} Direct consequence of the subadditivity, and the fact that for any concave 
function $f$, $x\rightarrow \frac{f(x+p_2)-f(x)}{p_2}$ is decreasing.
\end{proof}
\begin{rmq}
The better lower bound $\wE_{\min}(p)\geq p-\beta p^3$ is based on some Pohozaev's identities, that 
in turn require the existence of minimizers, therefore their proof is postponed to section 
\ref{poho}.
\end{rmq}

\section{Existence of minimizers}
The existence is obtained by following the procedure in \cite{BGS}, which is the following
\begin{itemize}
\item If one replaces $\R^2$ by the torus $\mathbb{T}^2_n=\R^2/(2n\pi\Z)^2$, the existence of a 
minimiser to \eqref{minpb} for any $p$ is easy thanks to elliptic estimates and compact embeddings.
\item Any minimiser $(\rho_n^p,u_n^p)$ satisfies $\|\rho_n^p-1\|\geq C p^2$, with $C$ independent 
of $n$ (torus version of proposition \ref{estimenergie}).
\item Letting $n\rightarrow \infty$, up to translation and extraction $(\rho_{n}^p,\phi_{n}^p)$ 
converges to
$(\rho^{\widetilde{p}},\phi^{\widetilde{p}})$, which is a \emph{non trivial} solution of equation 
\eqref{TWmod} with $\wE(\rho^p,\phi^p)\leq \wE_{\min}(p)$.
\item The sequence $(\rho_n^p,\phi_n^p)$ actually converges globally so that 
$P(\rho^p,\phi^p)=\lim_n P_n(\rho_n^p,\phi_n^p)=p$. This is the most difficult point, which 
requires a careful analysis of the difference between the energy density 
$K|\nabla \rho_n^p|^2+\chi(\rho_n^p)|\nabla\phi_n|^2+\widetilde{G}(\rho_n)$ and the momentum density 
$(\rho_n-1)\dun \phi_n$ on the ``vanishing set'' $|\rho_n-1|<<1$.
\end{itemize}
We point out that one of the reasons why F. Bethuel, P. Gravejat and J.C. Saut used the preliminary 
minimization on the torus was the difficulty to define $P(\rho,\phi)$. This is not an issue 
here, however working on the torus is essential to get strong a priori estimates and start a 
compactness procedure.\\
In order to use the (torus version of) elliptic estimate in proposition \ref{mainest}, we first 
use the smoothened energy 
\begin{equation*}
\widetilde{E}_n^\varepsilon(\rho,\phi):=\frac{1}{2}\int_{\mathbb{T}_n^2}\rho|\nabla \phi|^2+
K(\chi(\rho))|\nabla \rho|^2+2G(\rho)+\varepsilon\big((\Delta\phi)^2+(\Delta\rho)^2\big)dx,
\end{equation*}
and the notation $(\widetilde{E}_n^\varepsilon)_{\min}(p):=\inf_{P_n(\rho,\phi)=p}
\widetilde{E}_n^\varepsilon(\rho,\phi)$.
We provide a collection of lemmas that mimick the situation on $\R^2$, without the regularizing 
terms. 

\subsection{Minimizers on large tori}

The first step is a very rough version of proposition \ref{estimenergie}.

\begin{lemma}\label{estimeneps}
There exists $M>0$ such that for any $0\leq  p\leq 1$, $n\geq 5/p^2$, $0\leq\varepsilon\leq 1$, 
$$(\widetilde{E}_n^\varepsilon)_{\min}(p)\leq Mp.$$
\end{lemma}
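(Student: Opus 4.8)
The statement is a uniform (in $n$ and $\varepsilon$) upper bound on the torus minimum, so the natural strategy is to exhibit an explicit competitor on $\mathbb{T}_n^2$ with momentum exactly $p$ and energy $\lesssim p$. The obvious candidate is a one-dimensional profile: take a smooth compactly supported bump in the $x_1$-variable and constant in $x_2$, adapted from the construction already used in Lemma \ref{smoothmin}. Concretely, one wants $(\rho,\phi)$ with $\rho-1$ and $\partial_1\phi$ supported in a region of $x_1$-width $O(1)$ (so that for $n\geq 5/p^2\geq 5$ it fits comfortably inside $\mathbb{T}_n^2$), and with $P_n(\rho,\phi)=\int_{\mathbb{T}_n^2}(\rho-1)\partial_1\phi\,dx=p$.

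First I would build a fixed pair $(a,b)\in (C_c^\infty(\R))^2$ with $\int_\R a\,b'\,dx=1$ and $\int_\R (b')^2\,dx=1$ (rescaling $b$ if necessary), then set, for a small parameter $\lambda>0$ to be chosen,
\begin{equation*}
\rho(x)=1+\lambda\, a(x_1),\qquad \phi(x)=\lambda\, b(x_1),
\end{equation*}
viewed as functions on $\mathbb{T}_n^2$ (the $x_2$-integration over a period of length $2\pi n$ introduces a harmless factor, which I absorb by instead taking the profile supported in a unit $x_2$-slab, e.g. multiplying $\phi$ by a fixed cutoff $\theta(x_2)$ with $\int\theta=1$, $\int\theta^2<\infty$; either normalization works). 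Then $P_n(\rho,\phi)=\lambda^2\int a\,b'\,dx \cdot(\text{const})$, so choosing $\lambda^2$ proportional to $p$ gives $P_n=p$ exactly for all $p\in[0,1]$, with $\lambda=O(\sqrt p)$.

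Next I would estimate $\widetilde{E}_n^\varepsilon(\rho,\phi)$. Since $\|\rho-1\|_\infty=O(\lambda)=O(\sqrt p)$ is small, $\chi(\rho)=\rho$ and $G(\rho)=\widetilde G(\rho)$, and $\widetilde G(\rho)=\tfrac12(\rho-1)^2+O((\rho-1)^3)=\tfrac{\lambda^2}{2}a(x_1)^2+O(\lambda^3)$. The gradient terms are $\tfrac12\rho|\nabla\phi|^2=\tfrac{\lambda^2}{2}(b')^2+O(\lambda^3)$ and $\tfrac12 K(\chi(\rho))|\nabla\rho|^2=O(\lambda^2)$. The regularizing term contributes $\tfrac{\varepsilon}{2}\int (\Delta\phi)^2+(\Delta\rho)^2=\varepsilon\,\lambda^2\,(\text{const})\leq \lambda^2(\text{const})$ since $\varepsilon\leq 1$ and $a,b$ are fixed smooth functions. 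Summing, $\widetilde{E}_n^\varepsilon(\rho,\phi)\leq C_0\lambda^2 = C_0'\,p$ with $C_0'$ depending only on the fixed profiles $a,b,\theta$ (and on $K$, $G$ near $1$), hence independent of $n\geq 5/p^2$ and $\varepsilon\in[0,1]$. Setting $M=C_0'$ finishes the proof.

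**Main obstacle.** There is essentially no deep obstacle here — this is the "very rough version" of Proposition \ref{estimenergie}, and the only things to watch are bookkeeping: (i) making sure the chosen profile literally fits on $\mathbb{T}_n^2$, which is exactly what the hypothesis $n\geq 5/p^2$ (hence $n\geq 5$) guarantees once the support is of fixed $O(1)$ size; (ii) getting the momentum constraint to hold with equality rather than just approximately, which the quadratic scaling $P_n\propto\lambda^2$ delivers cleanly by solving for $\lambda$; and (iii) checking the $\varepsilon$-term is uniformly controlled, which is immediate since $\varepsilon\leq1$ and the profile is a fixed smooth function. The mild subtlety worth stating carefully is the role of the $x_2$-direction on the torus (a constant-in-$x_2$ profile has $L^2$-norm growing with $n$), which is why one localizes in $x_2$ by a fixed cutoff; after that the estimate is genuinely $n$-independent.
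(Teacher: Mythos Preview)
Your approach is correct in spirit and in fact simpler than the paper's, but there is one point to fix: you write that you localize in $x_2$ by ``multiplying $\phi$ by a fixed cutoff $\theta(x_2)$'', yet if only $\phi$ is localized while $\rho-1=\lambda\,a(x_1)$ remains constant in $x_2$, then $\int_{\mathbb{T}_n^2}\widetilde G(\rho)\,dx$ and $\int_{\mathbb{T}_n^2}K(\chi(\rho))|\nabla\rho|^2\,dx$ both pick up a factor $2\pi n$ and the bound fails. You must localize $\rho-1$ in $x_2$ as well (e.g.\ $\rho-1=\lambda\,a(x_1)\theta(x_2)$, $\phi=\lambda\,b(x_1)\theta(x_2)$, or simply start from genuinely two-dimensional compactly supported profiles). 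With that correction every energy term, including the $\varepsilon$-regularization, is $O(\lambda^2)=O(p)$ uniformly in $n,\varepsilon$. A second minor point: the claim $\chi(\rho)=\rho$ requires $\|\rho-1\|_\infty<1/3$, which for $p$ near $1$ is not automatic from $\lambda=O(\sqrt p)$; either choose the profile so that $\lambda\|a\theta\|_\infty<1/3$ for all $p\le 1$, or simply drop this claim, since $\chi$ and $K\circ\chi$ are bounded and $\widetilde G(\rho)\lesssim(\rho-1)^2$ hold unconditionally.

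The paper takes a different route: it reuses the anisotropic KP-type scaling of Proposition~\ref{estimenergie}, setting $\rho=1+p^2A(px_1,p^2x_2)$, $\phi=p\,\theta(px_1,p^2x_2)$ with $A=\partial_1\theta$ for a fixed two-dimensional bump $\theta$. Under that scaling the support in $x$ has diameter of order $5/p^2$, which is precisely the origin of the hypothesis $n\geq 5/p^2$. Your fixed-profile, amplitude-scaling construction only needs $n$ larger than an absolute constant (and $n\geq 5/p^2\geq 5$ for $p\le 1$ certainly suffices), so your argument is more elementary and incidentally shows that the specific form of the lower bound on $n$ is an artifact of the paper's choice of ansatz rather than intrinsic to the lemma. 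The paper's scaling, on the other hand, yields $\|\rho-1\|_\infty=O(p^2)$ automatically, so the issue with $\chi$ never arises there.
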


\begin{proof}
We start with an ansatz similar to proposition \ref{estimenergie}: let $\theta\in C_c^\infty(\R^2)$
such that $\text{supp}(\theta)\subset B(0,5)$, $\|\dun \theta\|_2=1,\ \|\dun\theta\|_\infty\leq 
1/2$, set $A=\dun \theta$ and 
$\rho=1+p^2A(px_1,p^2x_2)$, $\phi=p\theta(px_1,p^2x_2)$. Since $\text{supp}(\rho)\cup \text{supp}
(\phi)\subset B(0,5/p^2)$, there is an obvious way to define them as functions $(\rho_n,\phi_n)\in 
C^\infty(\mathbb{T}_n^2)$ 
for any $n\geq 5/p^2$. Next using $p\leq 1$, $\chi(\rho)=\rho$, basic computations give
\begin{eqnarray*}
P_n(\theta,\varphi)&=&p,\\ 
\widetilde{E}_n^\varepsilon(\rho_n,\phi_n)&=&
\int_{\R^2} 
\frac{1+p^2A(z_1,z_2)}{2}(p(\dun \theta)^2+p^3(\partial_2\theta)^2)+\frac{K(\rho)}{2}\big(p^3
(\dun A)^2+p^5(\partial_2A)^2+\widetilde{G}\\
&&\hspace{1cm}+\frac{\varepsilon}{2}\bigg(p^3\big(\dun^2\theta+p^2\partial_2^2\theta\big)^2
+p^5\big(\dun^2A+\partial_2^2A\big)^2\bigg)\,dz\\
&&\leq p\bigg(\frac{3}{4}(1+\|\partial_2\theta\|_2^2)+\max_{[1/2,3/2]}K\frac{\|\dun A\|_2^2
+\|\partial_2A\|_2^2}{2}+\frac{\max \widetilde{G}''}{2}\\
&&\hspace{3cm}+\|\dun^2\theta\|_2^2+\|\partial_2^2\theta\|_2^2
+\|\partial_1^2A\|_2^2++\|\partial_2^2A\|_2^2\bigg)
\end{eqnarray*}
The constant in factor of $p$ is clearly independent of $p,n,\varepsilon$.
\end{proof}

\begin{lemma} \label{existsmooth}
For any $p>0$, $n\geq 1,\ \varepsilon>0$ the minimization problem 
\begin{eqnarray*}
\inf\bigg\{\widetilde{E}_n^\varepsilon(\rho,\phi)\text{ with } (\rho-1,\nabla\phi)\in H^2\times H^1,\
P_n(\rho,\phi)=\int_{\mathbb{T}_n^2}(\rho-1)\dun\phi\,dx=p\bigg\}.
\end{eqnarray*}
admits a minimizer $(\rho_n^\varepsilon(p)-1,\phi_n^\varepsilon(p))\in H^2\times H^2$, 
solution of 
\begin{equation}\label{ellsmooth}
\left\{
\begin{array}{ll}
\displaystyle 
-c_{n,\varepsilon}\partial_1\rho_n^\varepsilon +\text{div}(\chi(\rho_n^\varepsilon)
\nabla \phi_n^\varepsilon)-\varepsilon\Delta^2\phi_n^\varepsilon=0,\\
\displaystyle 
-c_{n,\varepsilon}\partial_1\phi_n^\varepsilon+\chi'(\rho_n^\varepsilon)\frac{|\nabla 
\phi_n^\varepsilon|^2}{2}-K(\chi(\rho_n^\varepsilon))\Delta\rho_n^\varepsilon
-\frac{(K\circ\chi)'|\nabla\rho_n^\varepsilon|^2}{2}+\widetilde{g}(\rho_n^\varepsilon)
+\varepsilon\Delta^2\rho_n^\varepsilon=0,
\end{array}
\right.
\end{equation}
Moreover, there exists $p_1,M$ such that for $p\leq p_1$, $n\geq 5/p^2$, $\varepsilon\leq 1$,
$$c_{n,\varepsilon}\leq M,\
\|\rho_n^\varepsilon(p)-1\|_{H^2}+\|\nabla\phi_n^\varepsilon(p)\|_{H^1}\leq Mp.$$
Furthermore for any 
$j\geq 2$, there exists $F_j(p)\rightarrow_0 0$ such that 
$$\|(\rho_n^\varepsilon-1,\nabla\phi_n^\varepsilon)\|_{H^j\times H^{j-1}}\leq F_j(p).$$

\end{lemma}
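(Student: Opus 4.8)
The plan is to run the direct method on $\mathbb{T}_n^2$, extract the Euler--Lagrange system, and then show that the single regularizing parameter $\varepsilon$ only \emph{helps} in the elliptic bootstrap, so that all the estimates of Proposition \ref{mainest} go through uniformly in $\varepsilon$ and $n$.

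\textit{Existence and Euler--Lagrange equations.} After normalizing $\int_{\mathbb{T}_n^2}\phi=0$ (which changes neither $P_n$ nor $\wE_n^\varepsilon$), I take a minimizing sequence $(\rho_k,\phi_k)$; the infimum is finite since an ansatz as in Lemma \ref{estimeneps} gives a finite-energy competitor with $P_n=p$. For $\varepsilon>0$ fixed the quadratic parts of $\wE_n^\varepsilon$ bound $\|\rho_k-1\|_{H^1}^2+\|\nabla\phi_k\|_{L^2}^2+\varepsilon(\|\Delta\rho_k\|_2^2+\|\Delta\phi_k\|_2^2)$, so $(\rho_k-1,\phi_k)$ is bounded in $H^2\times H^2$; passing to a subsequence it converges weakly there, and by the compact embeddings $H^2(\mathbb{T}_n^2)\hookrightarrow H^1(\mathbb{T}_n^2)\cap C^0(\mathbb{T}_n^2)$ we get $\rho_k\to\rho$ in $C^0$ and $\nabla\rho_k\to\nabla\rho$, $\nabla\phi_k\to\nabla\phi$ in $L^2$. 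Then $\chi(\rho_k),K(\chi(\rho_k)),\widetilde{G}(\rho_k)$ converge uniformly, so the corresponding integrals pass to the limit, while the $\varepsilon$-terms are weakly lower semicontinuous; together with $P_n(\rho_k,\phi_k)\to P_n(\rho,\phi)$ this shows $(\rho,\phi)=:(\rho_n^\varepsilon,\phi_n^\varepsilon)$ is a minimizer, in $H^2\times H^2$. Since $dP_n(\rho_n^\varepsilon,\phi_n^\varepsilon)\neq 0$ (otherwise $\partial_1\rho_n^\varepsilon=\partial_1\phi_n^\varepsilon=0$ and $P_n=0\neq p$), the Lagrange multiplier theorem gives $c_{n,\varepsilon}\in\R$ with $d\wE_n^\varepsilon=c_{n,\varepsilon}\,dP_n$, whose weak form is exactly \eqref{ellsmooth}. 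Testing the first equation of \eqref{ellsmooth} against $\phi_n^\varepsilon$ and integrating by parts on the torus, the $\partial_1\rho_n^\varepsilon$ term becomes $c_{n,\varepsilon}P_n=c_{n,\varepsilon}p$ and we obtain $c_{n,\varepsilon}p=\int\chi(\rho_n^\varepsilon)|\nabla\phi_n^\varepsilon|^2+\varepsilon\|\Delta\phi_n^\varepsilon\|_2^2\leq 2\wE_n^\varepsilon(\rho_n^\varepsilon,\phi_n^\varepsilon)$, hence $0\leq c_{n,\varepsilon}\leq 2M$ by Lemma \ref{estimeneps}.

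\textit{Uniform $H^2$ bound.} By Lemma \ref{estimeneps}, $\wE_n^\varepsilon(\rho_n^\varepsilon,\phi_n^\varepsilon)\leq Mp$, so $\|\rho_n^\varepsilon-1\|_{H^1}^2+\|\nabla\phi_n^\varepsilon\|_{L^2}^2\lesssim p$ while the second-order control $\varepsilon(\|\Delta\rho_n^\varepsilon\|_2^2+\|\Delta\phi_n^\varepsilon\|_2^2)\lesssim p$ is only $\varepsilon$-weighted; the point is to replace it by an $\varepsilon$- and $n$-independent one. I rerun the argument of Proposition \ref{mainest} on \eqref{ellsmooth}. Writing $u=\nabla\phi_n^\varepsilon$, the gradient of the first equation tested against $u$ gives, after integration by parts on the torus, $\int\chi(\rho)|\Delta\phi|^2+\varepsilon\|\nabla\Delta\phi\|_2^2=c_{n,\varepsilon}\int\partial_1\rho\,\Delta\phi-\int(\nabla\chi\cdot u)\Delta\phi$, the regularization contributing only the favourable term $\varepsilon\|\nabla\Delta\phi\|_2^2$; likewise, testing the second equation of \eqref{ellsmooth} against $\Delta\rho$ gives $\int K(\chi(\rho))|\Delta\rho|^2+\varepsilon\|\nabla\Delta\rho\|_2^2$ bounded by a sum of terms $\|\partial_1\phi\|_2\|\Delta\rho\|_2$, $\||\nabla\phi|^2\|_2\|\Delta\rho\|_2$, $\||\nabla\rho|^2\|_2\|\Delta\rho\|_2$, $\|\widetilde{g}(\rho)\|_2\|\Delta\rho\|_2$. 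Since $\chi$ takes values in $[1/2,2]$ and $K\circ\chi$ is bounded below, the left-hand sides are coercive; using the scale-invariant inequalities $\|u\|_{L^4}\lesssim\|u\|_{L^2}^{1/2}\|\nabla u\|_{L^2}^{1/2}$ and $\|\nabla\rho\|_{L^4}\lesssim\|\nabla\rho\|_{L^2}^{1/2}\|\nabla^2\rho\|_{L^2}^{1/2}$ (legitimate with $n$-independent constant because $u$ and $\nabla\rho$ have zero mean on $\mathbb{T}_n^2$), the identity $\|\nabla^2\rho\|_{L^2}=\|\Delta\rho\|_{L^2}$ on the torus, the bound $|\widetilde{g}(\rho)|\lesssim|\rho-1|$ from \eqref{propgtilde}, and the smallness of $\|\nabla\rho_n^\varepsilon\|_{L^2}\lesssim\sqrt{p}$ to absorb the quadratic contributions, I close the estimate exactly as in Proposition \ref{mainest}: for $p\leq p_1$ small enough, $\|\rho_n^\varepsilon-1\|_{H^2}+\|\nabla\phi_n^\varepsilon\|_{H^1}\lesssim\sqrt{\wE_n^\varepsilon(\rho_n^\varepsilon,\phi_n^\varepsilon)}\lesssim\sqrt{p}$, uniformly in $n\geq 5/p^2$ and $\varepsilon\leq 1$.

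\textit{Higher regularity, and the main difficulty.} For the $H^j\times H^{j-1}$ bounds I induct on $j$, the case $j=2$ being the previous step. For $|\alpha|=j-1$, applying $\partial^\alpha$ to the first equation of \eqref{ellsmooth} and testing against $\partial^\alpha\phi_n^\varepsilon$ controls $\|\nabla\phi_n^\varepsilon\|_{H^{j-1}}$ (plus a harmless $\varepsilon\|\nabla\Delta\partial^\alpha\phi_n^\varepsilon\|_2^2$) by $\|\rho_n^\varepsilon-1\|_{H^{j-1}}$ and commutators; applying $\partial^\alpha$ to the second equation and testing against $\Delta\partial^\alpha\rho_n^\varepsilon$ controls $\|\rho_n^\varepsilon-1\|_{H^{j+1}}$ by $\|\nabla\phi_n^\varepsilon\|_{H^{j-1}}$ and nonlinear terms; all commutators and nonlinearities ($\chi'(\rho)|\nabla\phi|^2$, $(K\circ\chi)'(\rho)|\nabla\rho|^2$, $\widetilde{g}(\rho)$, $\ldots$) are dominated using that $H^s(\mathbb{T}_n^2)$ is an algebra for $s>1$, that $\chi\circ\rho$, $K\circ\chi\circ\rho$ and all their derivatives stay bounded (values of $\chi$ in $[1/2,2]$), and that the $H^2\times H^1$ norm is small, which lets one absorb the top-order pieces; interpolation then covers the remaining indices. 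This yields $\|(\rho_n^\varepsilon-1,\nabla\phi_n^\varepsilon)\|_{H^j\times H^{j-1}}\leq F_j(p)$ with $F_j$ depending only on $j$ and the fixed smooth data $\chi,K,\widetilde{g}$, and $F_j(p)\to 0$ as $p\to 0$ since $F_2(p)\sim\sqrt{p}$ and each inductive step only adds bounded contributions. The one genuinely delicate point in all of this is the uniform $H^2$ estimate: one must check that adding $\varepsilon\Delta^2$ does not spoil the borderline bootstrap of Proposition \ref{mainest} — it does not, since the new terms are coercive and appear with the right sign — and that every inequality used is scale invariant so that the bound is independent of the torus size $n$; the existence, the Lagrange multiplier, and the higher-order bootstrap are routine by comparison.
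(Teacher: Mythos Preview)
Your proof is correct and follows essentially the same route as the paper: direct method on the torus with Rellich compactness for existence, testing the first Euler--Lagrange equation against $\phi$ to bound the multiplier via Lemma~\ref{estimeneps}, then rerunning the borderline elliptic estimate of Proposition~\ref{mainest} on \eqref{ellsmooth} with the observation that the $\varepsilon\Delta^2$ terms contribute only coercive pieces $\varepsilon\|\nabla\Delta\phi\|_2^2$, $\varepsilon\|\nabla\Delta\rho\|_2^2$ on the left. Your explicit remarks on the scale-invariance of the Gagliardo--Nirenberg inequalities (ensuring $n$-independence) and on the nondegeneracy of $dP_n$ are useful clarifications that the paper leaves implicit; note also that both your computation and the paper's yield $\|\Delta\rho\|_2^2+\|\Delta\phi\|_2^2\lesssim p$, i.e.\ a $\sqrt{p}$ bound on the norms rather than the $Mp$ written in the statement.
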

\begin{proof}
We follow the scheme of proof of proposition \ref{mainest} with a few technical additions.
For simplicity of notations, we drop the $\varepsilon,n$ indices. If $\rho_k,\varphi_k$ is a 
minimizing sequence, by weak compactness and proposition \ref{anfunc} 
we can assume $\rho_k-1\rightharpoonup\rho-1\ (H^2)$, $\nabla \varphi_k
\rightharpoonup \nabla \phi\ (H^1)$, and from Rellich's compact embedding we have 
\begin{eqnarray*}
\lim_k \int_{\mathbb{T}_n^2}\rho_k|\nabla \phi_k|^2+
K(\chi(\rho_k))|\nabla \rho_k|^2+\widetilde{G}(\rho_k)dx=\int_{\mathbb{T}_n^2}\rho|\nabla \phi|^2+
K(\chi(\rho))|\nabla \rho|^2+\widetilde{G}(\rho)dx,\\
p=\lim\int_{\mathbb{T}_n^2}(\rho_k-1)\dun\phi_kdx=\int_{\mathbb{T}_n^2}(\rho-1)\dun\phi\,dx.
\end{eqnarray*}
We combine it with lower semi-continuity to obtain $\underline{\lim}_k\widetilde{E}_n^\varepsilon
(\rho_k,\varphi_k)\geq \widetilde{E}_n^\varepsilon(\rho,\phi)$, so that $(\rho,\phi)$ is a 
minimiser and solves \eqref{ellsmooth} for some $c(n,p,\varepsilon)$.
By standard elliptic regularity, $(\rho,u)$ is smooth (with norms a priori depending on 
$\varepsilon$).
Multiplying the first equation by $\phi$ and integrating by parts, we find 
\begin{equation*}
c\int_{\mathbb{T}_n^2}(\rho-1)\dun \phi dx=\int \chi(\rho)|\nabla \phi|^2+\varepsilon 
|\Delta\phi|^2dx\leq 2(\widetilde{E}_{n}^\varepsilon)_{\min}(p)\Leftrightarrow 0<c\leq 
\frac{2(\widetilde{E}_n^\varepsilon)_{\min}(p)}{p}
\end{equation*}
We deduce $n\geq 5/p^2\Rightarrow c<2M$ with $M$ the constant of lemma \ref{estimeneps}. With this 
bound on $c(n,p,\varepsilon)$ we can now obtain uniform elliptic estimates. 
The same computations as for proposition \ref{mainest} lead to
\begin{eqnarray*}
\int_{\mathbb{T}_n^2}\chi |\Delta\phi|^2+\varepsilon |\nabla \Delta \phi|^2dx&\leq& 2c\|\dun \rho\|_2^2+
2\|\nabla \chi(\rho)\|_4^2\|\nabla \phi\|_4^2,\\
\int_{\mathbb{T}_n^2}K(\chi(\rho))(\Delta\rho)^2+\varepsilon|\n\Delta \rho|^2dx
&\leq&  \big(c\|\dun \phi\|_2+\|(K\circ\chi)'\|_\infty\|\n\rho\|_4^2+\|\rho-1\|_2\big)
\|\Delta\rho\|_2
\end{eqnarray*}
As in proposition \ref{mainest} we use $\|\nabla \psi\|_4^2\lesssim\|\nabla \psi\|_2\|\Delta\psi\|_2$
to get for some $C>0$ independent of $M,\varepsilon,n\geq 5/p^2$
\begin{equation*}
\|\Delta \rho\|_2^2+\|\Delta \phi\|_2^2\lesssim \|\dun \phi\|_2^2+\|\dun\rho\|_2^2
+\|\Delta \rho\|_2\|\rho-1\|_2+\|\nabla \phi\|_4^4+\|\nabla \rho\|_4^4
\end{equation*}
\begin{eqnarray*}
\Rightarrow 
\big(\|\Delta \rho\|_2^2+\|\Delta \phi\|_2^2\big)(1-C(\|\nabla\rho\|_2^2+\|\nabla \phi\|_2^2))
&\lesssim & \|\dun \phi\|_2^2+\|\dun\rho\|_2^2)+\|\rho-1\|_2^2\\
&\lesssim& \widetilde{E}_n^\varepsilon(\rho,\phi).
\end{eqnarray*}
Using $\widetilde{E}_n^\varepsilon\leq Mp$ from lemma \ref{estimeneps}, we obtain for $p<<1/C$
\begin{equation*}
\|\Delta \rho\|_2^2+\|\Delta \phi\|_2^2\leq M'p,\ M'\text{ indepent of }p,\varepsilon,n\geq 5/p^2.
\end{equation*}
The estimate for $j=2$ follows since the energy controls $\|\rho-1\|_{H^1}+\|\nabla \phi\|_{L^2}$, 
the case $j>2$ is a standard bootstrap argument.
\end{proof}

\begin{prop} \label{existtore}
Let $p_1$ as in lemma \ref{existsmooth}. For any $p\leq p_1$, $n\geq 5/p^2$, there exists 
$(\rho_n,\phi_n)\in C^\infty(\mathbb{T}_n^2)$ 
such that up to an extraction, for any $j\geq 1$, $\|\rho_n^\varepsilon-\rho_n\|_{H^j}
+\|\n\phi_n^\varepsilon-\n\phi_n\|_{H^{j-1}}\rightarrow_{\varepsilon\rightarrow 0} 0$, 
$(\rho_n,\phi_n)$ is a solution of the 
minimization problem 
\begin{eqnarray*}
\inf\bigg\{
\widetilde{E}_n(\rho,\phi)=\int_{\mathbb{T}_n^2}\frac{1}{2}(\chi(\rho)|\nabla \phi|^2+K(\chi(\rho))
|\nabla \rho|^2)+\widetilde{G}(\rho)dx,\\ 
P_n(\rho,\phi)=\int_{\mathbb{T}_n^2}(\rho-1)\dun\phi\,dx=p_0\bigg\}.
\end{eqnarray*}
Moreover, $(\rho_n,\phi_n)$ is a solution of 
\begin{equation}\label{elltore}
\forall\,x\in \mathbb{T}_n^2\left\{
\begin{array}{ll}
\displaystyle 
-c_n\partial_1\rho_n +\text{div}(\chi(\rho_n)\nabla \phi_n)=0,\\
\displaystyle 
-c_n\partial_1\phi_n+\frac{\chi'|\nabla \phi_n|^2}{2}-K\Delta\rho_n-\frac{K'
|\nabla\rho_n|^2}{2}+\widetilde{g}(\rho_n)=0.
\end{array}
\right.
\end{equation}
for some $0\leq c_n\leq M$, $M$ the constant from lemma \ref{existsmooth} independent of $p$.
\end{prop}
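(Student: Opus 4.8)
The plan is to let $\varepsilon\to 0$ in the regularized minimizers $(\rho_n^\varepsilon(p),\phi_n^\varepsilon(p))$ constructed in Lemma \ref{existsmooth}; the crucial point is that \emph{all} the elliptic bounds provided there — in particular $c_{n,\varepsilon}\leq M$ and $\|(\rho_n^\varepsilon(p)-1,\n\phi_n^\varepsilon(p))\|_{H^j\times H^{j-1}}\leq F_j(p)$ for every $j$ — are uniform in $\varepsilon\in(0,1]$ (and in $n\geq 5/p^2$). Fix $p\leq p_1$, $n\geq 5/p^2$, abbreviate $(\rho^\varepsilon,\phi^\varepsilon):=(\rho_n^\varepsilon(p),\phi_n^\varepsilon(p))$ and normalize $\int_{\mathbb{T}_n^2}\phi^\varepsilon\,dx=0$ (legitimate, since the energy, the constraint and \eqref{ellsmooth} depend only on $\n\phi$). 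First I would extract a limit: since $\mathbb{T}_n^2$ is compact, the $\varepsilon$-uniform bounds and Rellich's theorem make $(\rho^\varepsilon-1,\n\phi^\varepsilon)_{0<\varepsilon\leq 1}$ precompact in $H^j\times H^{j-1}$ for every $j\geq 1$, and a diagonal extraction yields $\varepsilon_k\to 0$ and a limit $(\rho_n-1,\n\phi_n)$ with $\|\rho^{\varepsilon_k}-\rho_n\|_{H^j}+\|\n\phi^{\varepsilon_k}-\n\phi_n\|_{H^{j-1}}\to 0$ for all $j\geq 1$; in particular $\rho_n-1\in\bigcap_j H^j=C^\infty(\mathbb{T}_n^2)$ and, fixing its mean, $\phi_n\in C^\infty(\mathbb{T}_n^2)$. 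Up to a further extraction $c_{n,\varepsilon_k}\to c_n\in[0,M]$.

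Next I would pass to the limit in the Euler--Lagrange system \eqref{ellsmooth}. The biharmonic terms $\varepsilon\Delta^2\phi^\varepsilon$ and $\varepsilon\Delta^2\rho^\varepsilon$ tend to $0$ in $L^2$, precisely because $\|\Delta^2\rho^\varepsilon\|_2+\|\Delta^2\phi^\varepsilon\|_2$ is bounded uniformly in $\varepsilon$ by the higher-order estimates of Lemma \ref{existsmooth}; all remaining terms are smooth expressions in $(\rho^\varepsilon,\n\rho^\varepsilon,\n\phi^\varepsilon,c_{n,\varepsilon})$ and converge thanks to the strong convergence above (using that $\rho^\varepsilon$ stays in a fixed compact subinterval of $(0,\infty)$, so $\chi$, $K\circ\chi$, $\widetilde g$ and their derivatives cause no difficulty). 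Hence $(\rho_n,\phi_n)$ solves \eqref{elltore} with the constant $c_n$. Similarly, $P_n(\rho^{\varepsilon_k},\phi^{\varepsilon_k})=p$ for all $k$ and $P_n$ is continuous for $H^1\times L^2$ convergence of $(\rho-1,\n\phi)$, so $P_n(\rho_n,\phi_n)=p$. Finally, multiplying the first line of \eqref{elltore} by $\phi_n$ and integrating by parts gives $c_n\,p=\int_{\mathbb{T}_n^2}\chi(\rho_n)|\n\phi_n|^2\,dx\geq 0$ (and $c_n\geq 0$ also follows from $c_{n,\varepsilon}>0$), so $0\leq c_n\leq M$ with $M$ the constant of Lemma \ref{existsmooth}.

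It remains to check that $(\rho_n,\phi_n)$ minimizes the unregularized energy $\widetilde{E}_n$ under the constraint $P_n=p$. The first ingredient is $(\widetilde{E}_n^\varepsilon)_{\min}(p)\to(\widetilde{E}_n)_{\min}(p)$ as $\varepsilon\to 0$: since $\widetilde{E}_n^\varepsilon\geq\widetilde{E}_n$ we have $(\widetilde{E}_n^\varepsilon)_{\min}(p)\geq(\widetilde{E}_n)_{\min}(p)$, while for any competitor $(\rho,\phi)$ with $(\rho-1,\n\phi)\in H^2\times H^1$ and $P_n(\rho,\phi)=p$ one has $(\widetilde{E}_n^\varepsilon)_{\min}(p)\leq\widetilde{E}_n^\varepsilon(\rho,\phi)=\widetilde{E}_n(\rho,\phi)+O(\varepsilon)$; by a density argument on the torus analogous to Lemma \ref{smoothmin} (mollification plus a localized adjustment of the momentum, using the continuity of $\widetilde{E}_n$ for the $H^1\times L^2$ topology) such competitors exhaust $(\widetilde{E}_n)_{\min}(p)$, so $\limsup_\varepsilon(\widetilde{E}_n^\varepsilon)_{\min}(p)\leq(\widetilde{E}_n)_{\min}(p)$. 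The second ingredient is that strong convergence gives $\widetilde{E}_n(\rho^{\varepsilon_k},\phi^{\varepsilon_k})\to\widetilde{E}_n(\rho_n,\phi_n)$; combining $\widetilde{E}_n(\rho^{\varepsilon_k},\phi^{\varepsilon_k})\leq\widetilde{E}_n^{\varepsilon_k}(\rho^{\varepsilon_k},\phi^{\varepsilon_k})=(\widetilde{E}_n^{\varepsilon_k})_{\min}(p)$ with the first ingredient yields $\widetilde{E}_n(\rho_n,\phi_n)\leq(\widetilde{E}_n)_{\min}(p)$. Since $(\rho_n,\phi_n)$ satisfies the constraint, the reverse inequality is immediate, so $(\rho_n,\phi_n)$ is a minimizer.

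I do not expect a genuine obstacle here: it is a routine $\varepsilon\to 0$ limiting argument, entirely powered by the $\varepsilon$-uniform estimates of Lemma \ref{existsmooth}. The two points deserving attention are that the biharmonic regularizations really disappear in the limit — which is exactly why those uniform higher-order bounds $F_j(p)$ were established — and the convergence $(\widetilde{E}_n^\varepsilon)_{\min}\to(\widetilde{E}_n)_{\min}$, which rests on the density of smooth competitors together with $\widetilde{E}_n$ being continuous for the $H^1\times L^2$ topology; the control $0\leq c_n\leq M$ of the limiting speed follows directly from the uniform bound on $c_{n,\varepsilon}$ and the sign of $\int\chi(\rho_n)|\n\phi_n|^2$.
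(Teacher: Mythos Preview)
Your proposal is correct and follows essentially the same route as the paper: uniform-in-$\varepsilon$ bounds from Lemma \ref{existsmooth} plus Rellich and diagonal extraction to get a smooth limit, passage to the limit in \eqref{ellsmooth} (the biharmonic terms vanish by the uniform $H^4\times H^3$ bounds), and the convergence $(\widetilde E_n^\varepsilon)_{\min}(p)\to(\widetilde E_n)_{\min}(p)$ via density of smooth competitors. The only cosmetic difference is that the paper fixes the momentum of the approximating smooth competitors by the simpler rescaling $\phi_k\mapsto(p/p_k)\phi_k$ rather than a localized bump adjustment.
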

\begin{proof}
We fix $p\leq p_1$, 
$(\rho_n^\varepsilon(p),\varphi_n^\varepsilon(p))$ a minimizer. Using the a priori bounds,
Rellich's compactness theorem and diagonal extraction we can extract a sequence 
$\varepsilon_k\rightarrow 0$ with 
$$
c_n^\varepsilon\rightarrow_{\varepsilon} c_n\in [0,M],\ \forall\,j\geq 1,\ 
(\rho_n^{\varepsilon_k}(p)-1,\nabla\phi_n^{\varepsilon_k}(p))\rightarrow (\rho_n-1,
\nabla \phi_n)\ (H^j\times H^{j-1}).$$ 
Therefore we can pass to the limit in \eqref{ellsmooth}: since $(\rho_n^\varepsilon,\nabla\phi_n^\varepsilon)$
remains uniformly bounded in $H^4\times H^3$, the terms $\varepsilon\Delta^2\varphi_n^\varepsilon,
\varepsilon\Delta^2\rho_n^\varepsilon$ vanish, and $(\rho_n,\nabla \phi_n)$ is a solution of 
\eqref{TWmod}. Similarly, $\displaystyle\wE(\rho_n,\phi_n)=\lim_\varepsilon \wE_{\min}(p)$, $P(\rho_n,\phi_n)=p$.\\
To check the minimization property, we prove now
$\displaystyle \lim_{\varepsilon\rightarrow 0}
(\widetilde{E}_n^{\varepsilon})_{\min}(p)=(\widetilde{E}_n)_{\min}(p)$. Clearly, it suffices to prove 
$\leq$.
Let $\delta>0$, $(\rho,\phi)\in \mathcal{H}$ 
such that $P_n(\rho_n,\phi_n)=p,\ \widetilde{E}_n(\rho,\phi)\leq (\widetilde{E}_n)_{\min}(p)+\delta$. 
By density of smooth functions, there exists $\rho_k,\phi_k\in C^\infty(\mathbb{T}_n^2)$ such 
that $\|\rho-\rho_k\|_{H^1}+\|\nabla \phi-\nabla \phi_k\|_2\rightarrow_k 0$. In particular 
$P_n(\rho_k,\phi_k)=p_k\rightarrow p$ and (for $k$ large enough so that $p_k\neq 0$) 
$\|\nabla \phi -\frac{p}{p_k}\nabla \phi_k \|_2\rightarrow_k0$. Using Lebesgue's dominated convergence 
theorem and up to an extraction 
\begin{equation*}
 \widetilde{E}_n(\rho_k,\frac{p}{p_k}\phi_k)\rightarrow \widetilde{E}_n(\rho,\phi),\ P_n(\rho_k,
 \frac{p}{p_k}\phi_k)=p.
\end{equation*}
Let us fix $k$ large enough for which 
$\widetilde{E}_n(\rho_k,\phi_k)\leq \widetilde{E}_n(\rho,\phi)+\delta$. Then for $\varepsilon(k,\delta)$ 
small enough 
$\widetilde{E}_n^{\varepsilon}(\rho_k,\frac{p}{p_k}\phi_k)\leq 
\widetilde{E}_n(\rho_k,\frac{p}{p_k}\phi_k)+\delta\leq \widetilde{E}_n(\rho,\phi)+2\delta\leq 
(\widetilde{E}_n)_{\min}(p)+3\delta$. Since $\delta$ is arbitrary it ends the argument.
\end{proof}
\begin{rmq}
Using the identity $c_np=\int \chi(\rho_n)|\nabla \phi_n|^2$, $c_n$ is actually positive rather than nonnegative, 
but this is not useful here.
\end{rmq}

\subsection{Convergence of minimizers as \texorpdfstring{$n\rightarrow \infty$}{ninf}}
We start with the following immediate consequence of lemma \ref{smoothmin}.
\begin{prop}\label{convemin}
For any $p\geq 0$, $\displaystyle \mathop{\overline{\lim}}_{n\rightarrow \infty}
(\widetilde{E}_{n})_{\min}(p)\leq \widetilde{E}_{\min}(p)$.
\end{prop}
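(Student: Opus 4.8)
The plan is to feed a compactly supported near-minimizer for $\widetilde{E}_{\min}(p)$ into the torus problem once the torus is large enough to contain its support. Fix $p\geq 0$ and $\delta>0$. By Lemma~\ref{smoothmin} there is a pair $(\rho,\phi)$ with $\rho-1\in C_c^\infty(\R^2)$ and $\nabla\phi\in C_c^\infty(\R^2)$ such that $P(\rho,\phi)=p$ and $\widetilde{E}(\rho,\phi)\leq \widetilde{E}_{\min}(p)+\delta$. Since $\nabla\phi$ has compact support and the complement of a large ball in $\R^2$ is connected, $\phi$ is constant outside a compact set; subtracting that constant alters neither $P$ (which involves only $\dun\phi$) nor $\widetilde{E}$ (which involves only $\nabla\phi$), so I may assume $\phi\in C_c^\infty(\R^2)$ as well.

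Next I would pick $R>0$ with $\operatorname{supp}(\rho-1)\cup\operatorname{supp}(\phi)\subset(-R,R)^2$. For every $n>R/\pi$, the square $(-R,R)^2$ sits inside a fundamental domain of $\mathbb{T}_n^2=\R^2/(2n\pi\Z)^2$, so the periodic extension of $(\rho,\phi)$ defines a pair in $C^\infty(\mathbb{T}_n^2)$, still denoted $(\rho,\phi)$. Because the integrands defining $P$ and $\widetilde{E}$ are supported inside that fundamental domain and $\widetilde{G}(1)=0$ (so the constant background $\rho\equiv 1$ contributes nothing), the torus integrals equal the $\R^2$ integrals:
\begin{equation*}
P_n(\rho,\phi)=P(\rho,\phi)=p,\qquad \widetilde{E}_n(\rho,\phi)=\widetilde{E}(\rho,\phi)\leq \widetilde{E}_{\min}(p)+\delta.
\end{equation*}
Hence $(\widetilde{E}_n)_{\min}(p)\leq\widetilde{E}_{\min}(p)+\delta$ for all $n>R/\pi$, which yields $\limsup_{n\to\infty}(\widetilde{E}_n)_{\min}(p)\leq\widetilde{E}_{\min}(p)+\delta$; letting $\delta\to 0$ concludes.

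There is essentially no obstacle: the only steps deserving a word of care are the upgrade from a competitor with merely $\nabla\phi\in C_c^\infty$ to one with $\phi\in C_c^\infty$, and the fact that periodizing a function supported in a fundamental domain leaves both functionals unchanged — both immediate. The whole content of the statement is packaged inside Lemma~\ref{smoothmin}, namely the existence of compactly supported near-minimizers on $\R^2$.
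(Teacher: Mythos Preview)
Your argument is correct and is exactly what the paper has in mind: the proposition is stated there as an immediate consequence of Lemma~\ref{smoothmin}, and you have simply spelled out the obvious periodization step. The small remark about upgrading from $\nabla\phi\in C_c^\infty$ to $\phi\in C_c^\infty$ is a nice touch, though the proof of Lemma~\ref{smoothmin} actually already produces $\phi\in C_c^\infty$.
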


This opens the path to the existence of minimizers on $\R^2$.
In this section, we consider a sequence of minimizers $(\rho_n,\phi_n)$ of momentum $p$ 
on $\mathbb{T}_n^2$.
We identify $\mathbb{T}_n^2$ as $\Omega_n=[-n\pi,n\pi]^2\subset \R^2$. For any function $\psi_n$ 
defined on $\mathbb{T}_n^2$, $K$ compact, by ``$\psi_n\rightarrow \psi$ on $K$'' we implicitly 
identify $\psi_n$ with the function defined on $\Omega_n$, $n$ large enough so that 
$K\subset \Omega_n$.
\begin{prop}\label{convprofile}
Let $p\leq p_2=\min(p_0,p_1)$, with $(p_0,p_1)$ from proposition \ref{estimenergie} and lemma 
\ref{existsmooth}, let $(\rho_n(p),\phi_n(p))$ be a minimizer of $\widetilde{E}_n$ of momentum $p$. Assume
\begin{equation}\label{assump}
\exists\,\delta>0\ :\ \forall\,n\geq 0,\ |\rho_n(0)-1|\geq \delta,
\end{equation}
then up to 
an extraction 
there exists $(\rho,\nabla \phi)\in  (\cap_j H^j)^2$  such that 
\begin{enumerate}
\item for any $j\geq 1$, any compact $K\subset \R^2$, 
$\|\rho_n-\rho\|_{H^j(K)}+\|\nabla\phi_n-\nabla \phi\|_{H^{j-1}}\rightarrow 0$, in particular 
$|\rho(0)-1|\geq \delta$. 
\item $(\rho,\phi)$ is a solution of \eqref{TWmod} for some $c=\lim c_n\in ]0,M]$, $M$ independent of $p$.
\item $P(\rho,\phi)>0$.
\end{enumerate}
\end{prop}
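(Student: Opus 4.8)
The plan is to extract a limit from the uniform-in-$n$ bounds of Lemma~\ref{existsmooth}--Proposition~\ref{existtore}, translate the sequence so the limit is nontrivial, and then check that the momentum does not entirely escape to infinity. First I would recall that by Proposition~\ref{existtore} each $(\rho_n,\phi_n)$ is a smooth solution of \eqref{elltore} with $0\le c_n\le M$ and, by the torus analogue of the a priori estimates, $\|\rho_n-1\|_{H^j(\mathbb{T}_n^2)}+\|\nabla\phi_n\|_{H^{j-1}(\mathbb{T}_n^2)}\le F_j(p)$ uniformly in $n$ (this is where $p\le p_2$ is used, so that the elliptic bootstrap of Proposition~\ref{mainest} applies). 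Passing to a subsequence, $c_n\to c\in[0,M]$. The uniform $H^j_{\mathrm{loc}}$ bounds plus Rellich give, after a diagonal extraction, a limit $(\rho,\nabla\phi)$ with $\rho-1\in\cap_j H^j$, $\nabla\phi\in\cap_j H^j$, and $\rho_n\to\rho$, $\nabla\phi_n\to\nabla\phi$ in $H^j(K)$ for every compact $K$ and every $j$. Strong local convergence lets me pass to the limit in \eqref{elltore} term by term (all nonlinearities are local and the $C^\infty$ convergence is strong on compacts), so $(\rho,\phi)$ solves \eqref{TWmod} with speed $c$. Assumption \eqref{assump} is exactly what forces $(\rho,\phi)$ to be nontrivial: since $|\rho_n(0)-1|\ge\delta$ and $\rho_n(0)\to\rho(0)$ (the point $0$ is in every $\Omega_n$), we get $|\rho(0)-1|\ge\delta$, hence $\rho\not\equiv 1$. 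This proves item~1 and most of item~2.

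Next I would upgrade $c\in[0,M]$ to $c\in\,]0,M]$. The identity $c_n p=\int_{\mathbb{T}_n^2}\chi(\rho_n)|\nabla\phi_n|^2\,dx$ from the remark after Proposition~\ref{existtore} is not by itself enough because the right side could vanish in the limit; instead I would argue by contradiction. If $c=0$, then $(\rho,\phi)$ solves \eqref{TWmod} with $c=0$, i.e.\ $\mathrm{div}(\chi(\rho)\nabla\phi)=0$ together with the second equation. Testing the first equation against $\phi$ (legitimate since $\rho-1,\nabla\phi\in\cap_j H^j$, so everything decays) gives $\int\chi(\rho)|\nabla\phi|^2=0$, so $\nabla\phi\equiv 0$; plugging into the second equation leaves $-K(\chi(\rho))\Delta\rho-\tfrac12(K\circ\chi)'|\nabla\rho|^2+\widetilde g(\rho)=0$. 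Multiplying by $\rho-1$ and integrating, using $\widetilde G\gtrsim(\rho-1)^2$ and $\widetilde g(\rho)(\rho-1)\gtrsim(\rho-1)^2$ near $\rho=1$ (valid because $\|\rho-1\|_\infty\lesssim\sqrt{\widetilde E}\lesssim\sqrt{p}$ is small by Proposition~\ref{mainest}), together with a sign/Pohozaev-type argument, forces $\rho\equiv 1$, contradicting $|\rho(0)-1|\ge\delta$. Hence $c>0$.

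Finally, item~3: $P(\rho,\phi)>0$. I would first show $P(\rho,\phi)\ge 0$ cannot be strict-free by a direct computation, then rule out $P=0$. From $c>0$ and the limiting equations one has the Pohozaev/virial identity and also $c\,P(\rho,\phi)=\int\chi(\rho)|\nabla\phi|^2\,dx$ (test the first equation of \eqref{TWmod} against $\phi$ on $\R^2$, justified by decay of $(\rho-1,\nabla\phi)$). If $P(\rho,\phi)=0$ then $\nabla\phi\equiv 0$, and then as in the previous paragraph $\rho\equiv 1$, contradicting nontriviality. Therefore $P(\rho,\phi)>0$, which is item~3.

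The main obstacle is the justification of the integration-by-parts identities on $\R^2$ and the passage to the limit for the \emph{global} (as opposed to local) quantities: the uniform bounds only give weak-$*$ convergence of $\rho_n-1$ and $\nabla\phi_n$ in $H^j(\R^2)$ after identifying them with functions on $\Omega_n$, and a priori mass can leak to infinity, so one cannot yet claim $P(\rho,\phi)=p$ (indeed that is the subject of the next, harder proposition). What \emph{is} needed here is only the qualitative facts $c>0$ and $P>0$, and the delicate point is to make sure the test-function computations are valid: one must check that $\phi$ itself (not just $\nabla\phi$) is controlled, which follows from $\nabla\phi\in\cap_j H^j$ on $\R^2$ together with the structure of $\dot H^1$ from Definition~\ref{defH1} and Proposition~\ref{anfunc}, allowing approximation by $\nabla C_c^\infty$. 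Everything else is a routine combination of the elliptic estimate, Rellich compactness, and the explicit torus identities already in hand.
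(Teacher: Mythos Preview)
Your treatment of items 1 and the convergence half of item 2 (uniform $H^j$ bounds, Rellich, diagonal extraction, passage to the limit in \eqref{elltore}) is the same as the paper's.

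For the strict positivity of $c$ and of $P$ you take a different and longer route. You argue by contradiction: if $c=0$, testing $\mathrm{div}(\chi(\rho)\nabla\phi)=0$ against $\phi$ gives $\nabla\phi\equiv 0$, and then a sign argument on the remaining scalar equation (multiply by $\rho-1$, integrate, use smallness of $\|\rho-1\|_\infty$) forces $\rho\equiv 1$; the case $P=0$ is handled via the identity $cP=\int\chi|\nabla\phi|^2$ and the same Liouville step. This is correct---the testing against $\phi$ is indeed justified by the density of $\nabla C_c^\infty$ in $L^2_{\mathrm{curl}}$ as you observe, and the sign argument goes through because $K(\chi(\rho))+\tfrac12(K\circ\chi)'(\rho-1)>0$ and $\widetilde g(\rho)(\rho-1)\ge 0$ when $\|\rho-1\|_\infty$ is small---but it costs you a separate Liouville-type lemma and an appeal to the smallness regime.

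The paper instead invokes the Pohozaev identity \eqref{poho3}, which in dimension $2$ reads simply
\[
cP(\rho,\phi)=2\int_{\R^2}\widetilde G(\rho)\,dx.
\]
Since $|\rho(0)-1|\ge\delta$, $\rho$ is smooth, and $\widetilde G\ge 0$ with equality only at $1$, the right-hand side is strictly positive; combined with $c\ge 0$ this yields $c>0$ and $P>0$ simultaneously. This is shorter, does not require a Liouville argument for the $c=0$ equation, and does not use smallness of $\|\rho-1\|_\infty$. Your route via \eqref{energie1} has the minor advantage that the identity $cP=\int\chi|\nabla\phi|^2$ is slightly easier to justify than the weighted-multiplier computation behind \eqref{poho3}, but for solutions in $\cap_j H^j$ both are routine.
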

\begin{proof}
Items 1. and 2. follow from the same argument as for proposition $\ref{existtore}$. As for $n$ 
large enough $0\leq c_n\leq M$, we have 
$0\leq c\leq M$. However, because the convergence 
is only \emph{local} we can not pass to the limit in 
$\wE(\rho_n,\phi_n)$ and $P(\rho_n,\phi_n)$. \\
For item  3. we note that the assumption $|\rho_n(0)-1|\geq \delta$ implies $\rho(0)\neq 1$, and since $\rho$ 
is smooth $\int_{\R^2} \widetilde{G}(\rho)dx>0$. Since $(\rho,\nabla\phi)$
is a solution of \eqref{TWmod}, it satisfies the identity \eqref{poho3} which reads 
$$cP(\rho,\phi)=2\int \widetilde{G}(\rho)dx.$$
The right hand side is positive, and $c\geq 0$, therefore $c>0$ and $P(\rho,\phi)>0$. 
\end{proof}
\begin{prop}\label{mainprop}
In proposition \ref{convprofile}, up to a translation assumption \eqref{assump} is true and 
\begin{eqnarray}
P(\rho,\phi)&=&\lim_nP_n(\rho_n,\phi_n)=p,\\
\lim_n(\wE_n)_{\min}(p)&=&\lim_n \wE_n(\rho_n,\nabla \phi_n)=\wE(\rho,\nabla\phi).
\end{eqnarray}
\end{prop}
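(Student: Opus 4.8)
The plan is to run Lions's concentration-compactness dichotomy on the momentum densities of the minimizers $(\rho_n,\phi_n)$, using the strict subadditivity of $\wE_{\min}$ (proposition \ref{posD}) to rule out splitting, and the strict positivity lower bound $\|\rho_n-1\|_\infty\gtrsim p^2$ (proposition \ref{estimenergie}, torus version, which is where assumption \eqref{assump} comes from after a translation) to rule out vanishing. First I would introduce the concentration function $Q_n(R)=\sup_{y}\int_{B(y,R)}\big(K|\nabla\rho_n|^2+\chi(\rho_n)|\nabla\phi_n|^2+\wt G(\rho_n)\big)dx$, which is bounded by $\wE_n(\rho_n,\phi_n)\le (\wE_n)_{\min}(p)\le \wE_{\min}(p)$ uniformly in $n$. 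Passing to a subsequence, $\lambda:=\lim_{R\to\infty}\lim_n Q_n(R)$ exists, and we have the three alternatives: compactness ($\lambda$ equals the total energy, attained uniformly), vanishing ($\lambda=0$), or dichotomy ($0<\lambda<\wE_{\min}(p)$). Since we cannot yet control $\int \wt G(\rho_n)$ globally in the limit, the right object to split is really the \emph{momentum} $p=P_n(\rho_n,\phi_n)$; this is exactly the profile decomposition proved in the appendix (prop. from appendix \ref{appendiceprof}), which writes $p$ as a sum of momenta $p^j$ of weak limits after translations, with $\sum_j \wE(\rho^j,\phi^j)\le \lim_n(\wE_n)_{\min}(p)$ and $\sum_j p^j=p$.

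Next I would eliminate vanishing. After a suitable translation of each $(\rho_n,\phi_n)$ (the system is translation invariant, and the problem is posed on a torus so translations are free), assumption \eqref{assump} holds: indeed the torus version of proposition \ref{estimenergie} gives $\|\rho_n-1\|_\infty\ge Cp^2$ with $C$ independent of $n$, so we may translate so that the near-maximum of $|\rho_n-1|$ sits at the origin. Then proposition \ref{convprofile} applies and produces a nontrivial local limit $(\rho,\phi)$ solving \eqref{TWmod} with $c=\lim c_n\in(0,M]$ and, crucially, $P(\rho,\phi)>0$. So at least one profile is nontrivial with strictly positive momentum $p^1:=P(\rho,\phi)$.

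Then I would eliminate dichotomy, i.e.\ show there is exactly one profile and it carries all the momentum. Suppose not: then $p=\sum p^j$ with at least two nonzero terms, or one term $p^1<p$ plus a nonzero remainder. Using $\wE_{\min}(p^j)\le \wE(\rho^j,\phi^j)$ for each profile (each is admissible with momentum $p^j$) and the energy estimate from the profile decomposition, $\sum_j \wE_{\min}(p^j)\le \sum_j \wE(\rho^j,\phi^j)\le \lim_n(\wE_n)_{\min}(p)\le \wE_{\min}(p)$ by proposition \ref{convemin}. But strict subadditivity (proposition \ref{posD}, item 2) gives $\wE_{\min}(p)=\wE_{\min}(\sum p^j)<\sum_j\wE_{\min}(p^j)$ whenever at least two $p^j$ are positive — contradiction. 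Hence exactly one profile is nonzero and $p^1=p$, so $P(\rho,\phi)=p=\lim_nP_n(\rho_n,\phi_n)$, and the energy inequalities collapse to $\wE(\rho,\phi)=\lim_n \wE_n(\rho_n,\phi_n)=\lim_n(\wE_n)_{\min}(p)$.

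Finally, the main obstacle. The delicate point is not the abstract dichotomy but justifying the profile decomposition \emph{for $P$ and $\wE$ simultaneously} on the degenerate set $|\rho_n-1|\ll1$: there the momentum density $(\rho_n-1)\dun\phi_n$ and the energy density $K|\nabla\rho_n|^2+\chi(\rho_n)|\nabla\phi_n|^2+\wt G(\rho_n)$ are comparable only to leading order, with cubic error controlled by $\|\rho_n-1\|_\infty$ and Sobolev embedding, so one must carefully track that the "lost" momentum in the regions of small energy is genuinely small — this is precisely the analysis flagged in the plan of section 4 and carried out via the appendix profile decomposition. I would invoke that decomposition as a black box here, together with the fact (from proposition \ref{convprofile} and \eqref{poho3}) that every nontrivial profile has $c>0$ and hence strictly positive momentum of a definite sign, which is what makes $\sum p^j=p$ a finite sum of positive terms to which strict subadditivity applies.
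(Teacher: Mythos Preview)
Your overall strategy---translate to ensure a nontrivial profile, then use strict subadditivity to forbid splitting---is the same as the paper's, and the first two steps (verifying \eqref{assump} after translation, and extracting one profile with $P(\rho,\phi)>0$ via proposition \ref{convprofile}) are fine. However there is a genuine gap in the dichotomy/spreading step.

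You assume that the profile decomposition of the appendix delivers an exact identity $\sum_j p^j=p$ with no remainder, and then apply strict subadditivity to the profile momenta. This is not what lemma \ref{profildec} provides. What it actually gives is a decomposition into finitely many profiles \emph{plus a spreading remainder}: after passing to the limit in $n$ and then $k\to\infty$ one obtains
\[
\lim_n\wE_n(\rho_n,\phi_n)=\sum_{i=1}^l\wE(\rho^i,\phi^i)+\mu,\qquad
p=\sum_{i=1}^lP(\rho^i,\phi^i)+\nu,
\]
together with the crucial estimate $|\mu-c\nu|\le C\delta\mu$ coming from \eqref{controlspread}. The remainder $\nu$ is \emph{not} the momentum of any profile, so you cannot simply absorb it into the subadditivity inequality $\wE_{\min}(\sum p^j)<\sum\wE_{\min}(p^j)$. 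Your sentence ``or one term $p^1<p$ plus a nonzero remainder'' identifies exactly this case, but your argument does not treat it.

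The missing ingredient is an inequality of the form $\wE_{\min}(\nu)\le\mu+C\delta\mu$, which allows one to write $\wE_{\min}(p_1)+\wE_{\min}(\nu)\le\wE_{\min}(p_1)+\mu+C\delta\mu\le\wE_{\min}(p)+C\delta\mu$ and then let $\delta\to0$ to contradict strict subadditivity. The paper obtains this via a case split on the limiting speed $c=\lim c_n$. If $c\ge1$, one uses $\wE_{\min}(\nu)\le\nu\le c\nu$ (the first inequality from proposition \ref{estimenergie}) together with $|\mu-c\nu|\le C\delta\mu$. If $c<1$, the stronger estimate \eqref{vanishspread} directly forces the spreading energy (and hence $\mu$, $\nu$) to vanish. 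In both cases one also needs to send the parameter $\delta$ of the profile decomposition to zero, using that the constants in lemma \ref{profildec} are independent of $\delta$ and that the lower bound on the profile momenta ($p_i\ge 2q/M$ via \eqref{poho3} and uniform $L^\infty$ bounds) is likewise $\delta$-independent. Without this mechanism your argument cannot exclude the scenario of a single profile carrying momentum $p_1<p$ with the defect $\nu=p-p_1>0$ dissipated in the vanishing set.
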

In view of proposition \ref{convemin}, this proposition implies the existence of a solution to 
\eqref{TWmod} which is a constrained minimizer to $\wE$.
The key is to forbid the following behaviours of the sequence $(\rho_n,\phi_n)$:
\begin{itemize}
 \item dichotomy: the minimizing sequence $(\rho_n,\phi_n)$ splits in (at least) two profiles 
 whose supports are more and more distant.
 \item spreading: the total energy ``far from the profiles'' does not converge to $0$, although 
 $\rho_n,\phi_n\rightarrow (1,0)$ uniformly.
\end{itemize}

\paragraph{Profile decomposition and proof of proposition \ref{mainprop}}
We note $d(\cdot,\cdot)$ the distance on the 
torus $\mathbb{T}_n^2$, the energy density 
$\widetilde{e}(\rho,\phi)=\frac{1}{2}\big(\chi(\rho)|\nabla\phi|^2+
K\circ\chi(\rho)|\nabla \rho|^2
+(\rho-1)^2\big)$ and the momentum density $p(\rho_n,\phi_n)=(\rho_n-1)\dun \phi_n$. 
For $x\in \mathbb{T}_n^2$, the set $B(x,r)$ is the ball in $\mathbb{T}_n^2$. \\
The key lemma is a modification of proposition $4.2$ and lemma $5.2$ in 
\cite{BGS}. For the convenience of the reader we include a proof in the appendix.
\begin{lemma}\label{profildec}
Let $(\rho_n,\phi_n)$ be a sequence of minimizers of $\wE_n$ of momentum $p$.
Up to an extraction, $c_n\longrightarrow c\in ]0,M]$, $M$ independent of $p$.\\ 
For $\delta \lesssim p^2$, we set $A_n^\delta=\{x:\ |\rho_n(x)-1|\geq \delta\}$. Up to an other extraction 
there exists a sequence of radiuses $(R_n^k)_{n\geq k\geq 1}$, $l\in \N^*$, $(x_n^i)_{1\leq i\leq l}\in 
(\mathbb{T}_n^ 2)^l$, $M_k\rightarrow \infty$
such that : 
\begin{itemize}
\item $\forall\,n\geq 1,\ 1\leq i\leq l,\ |\rho_n(x_n^i)-1|\geq \delta$.
\item For any $n\geq k$, $\inf_{i\neq j}d(x_n^i,x_n^j)\geq 10 R_n^k$ and
$A_n^\delta\subset \bigsqcup_{i=1}^lB(x_n^i,R_n^k)$.
\item There exists $C$ independent of $\delta,n,k$  such that for any $n\geq k$, $1\leq i\leq l$
\begin{equation}\label{controlspread}
\displaystyle \bigg|\int_{(\cup B(x_n^i,R_n^k))^c}\widetilde{e}(\rho_n,\phi_n)
-c_np(\rho_n,\phi_n)dx\bigg|\leq C\bigg(\delta  \int_{(\cup B(x_n^i,R_n^k))^c}
\widetilde{e}(\rho_n,\phi_n)dx+\frac{\widetilde{E_n}(\rho_n,\phi_n)}{M_k}\bigg).
\end{equation}
\item $\forall\,k\geq 1,\ R_n^k\longrightarrow_n R^k<\infty$, $R^k\longrightarrow_k \infty$. 
\end{itemize}
Moreover, if $c<1$, for $\delta$ small enough we can replace  \eqref{controlspread} by 
\begin{equation}\label{vanishspread}
 \bigg|\int_{(\cup B(x_n^i,R_n^k))^c}\widetilde{e}(\rho_n,\phi_n)
 dx\bigg|\leq C\frac{\widetilde{E_n}(\rho_n,\phi_n)}{(1-c)M_k}\bigg).
\end{equation}

\end{lemma}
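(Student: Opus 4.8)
The plan is to prove Lemma~\ref{profildec} by a concentration--compactness argument in the spirit of \cite{BGS}, using as sole inputs the uniform a priori bounds on the torus minimizers and their Euler--Lagrange system \eqref{elltore}. First I would record the consequences of Proposition~\ref{existtore} and Lemma~\ref{existsmooth}: the family $(\rho_n-1,\nabla\phi_n)$ is bounded in $H^j\times H^{j-1}$ for every $j$ by a constant depending only on $p$, with $\|\rho_n-1\|_\infty\lesssim p$, so that on $\mathbb{T}_n^2$ one has $\chi(\rho_n)=\rho_n$, $\widetilde{G}(\rho_n)=G(\rho_n)$ and all the coefficients $K\circ\chi$, $\chi'$, $(K\circ\chi)'$ sit within $O(p)$ of their values at $\rho=1$; moreover $\widetilde{E}_n(\rho_n,\phi_n)=(\widetilde{E}_n)_{\min}(p)\lesssim p$ by Lemma~\ref{estimeneps}. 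Since $0\le c_n\le M$ and $c_np=\int_{\mathbb{T}_n^2}\chi(\rho_n)|\nabla\phi_n|^2>0$, after extraction $c_n\to c\in]0,M]$.

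Next I would localize the region where $\rho_n$ is far from $1$. Fix $\delta\lesssim p^2$ and cover $A_n^\delta=\{x:\ |\rho_n(x)-1|\ge\delta\}$ by unit balls. The uniform bound on $\|\rho_n-1\|_{H^2}$ yields a uniform H\"older modulus of continuity, hence each ball meeting $A_n^\delta$ contains a sub-ball of radius $r_0=r_0(p,\delta)>0$ on which $(\rho_n-1)^2\ge\delta^2/4$; such a ball therefore carries at least $\eta(p,\delta)>0$ of $\int(\rho_n-1)^2\le 2\widetilde{E}_n\lesssim p$, so a Vitali selection produces at most $l=l(p,\delta)$ essential balls, and after a further extraction $l$ is independent of $n$. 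Given these $l$ balls I would run the standard merging procedure: as long as two of the (at most $l$) current balls have centres at distance $\le 10$ times the current radius, replace them by a single larger ball; since there are at most $l$ of them this terminates and produces $x_n^1,\dots,x_n^l$ with $\inf_{i\ne j}d(x_n^i,x_n^j)\ge 10R_n^k$ and $A_n^\delta\subset\bigsqcup_iB(x_n^i,R_n^k)$. Carrying out the construction at level $k$ while requiring $M_k\to\infty$ disjoint dyadic annuli around each $x_n^i$ to fit inside the separation gap forces $R_n^k$ to grow with $k$; as for fixed $k$ the admissible radii lie in a range depending only on $p,\delta,k$, a diagonal extraction gives $R_n^k\to_nR^k<\infty$ with $R^k\to_k\infty$.

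For the spreading estimate, set $\Omega_n'=\bigl(\bigcup_iB(x_n^i,R_n^k)\bigr)^c$, where $|\rho_n-1|<\delta$. Testing the mass equation of \eqref{elltore} against $\phi_n$ and the momentum equation against $\rho_n-1$, integrating over $\Omega_n'$ and adding the two identities (using $\widetilde{g}(\rho)(\rho-1)=2\widetilde{G}(\rho)+O(|\rho-1|^3)$ and $(\rho-1)^2=2\widetilde{G}(\rho)+O(|\rho-1|^3)$), one obtains
$$\int_{\Omega_n'}\bigl(\widetilde{e}(\rho_n,\phi_n)-c_n p(\rho_n,\phi_n)\bigr)\,dx=\Phi_n+O\!\Bigl(\int_{\Omega_n'}|\rho_n-1|\,\widetilde{e}(\rho_n,\phi_n)\,dx\Bigr),$$
where $\Phi_n$ gathers the flux terms through $\bigcup_i\partial B(x_n^i,R_n^k)$. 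The bulk error is $\lesssim\delta\int_{\Omega_n'}\widetilde{e}$ since $|\rho_n-1|<\delta$ there. For $\Phi_n$ one exploits that the mass current $J_n=\chi(\rho_n)\nabla\phi_n-c_n(\rho_n-1)\,e_1$ is divergence-free, so its flux through each $\partial B(x_n^i,r)$ vanishes and $\phi_n$ may be replaced by $\phi_n$ minus its average over a cut-off annulus; choosing that annulus among the $M_k$ dyadic ones so that it carries energy $\lesssim\widetilde{E}_n/M_k$ makes the Poincar\'e constant cancel the cut-off gradient, whence $|\Phi_n|\lesssim\widetilde{E}_n/M_k$. This is \eqref{controlspread}. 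Finally, if $c<1$, the elementary identity $\tfrac12 a^2+\tfrac12 b^2-c_n ab=\tfrac{1-c_n}{2}(a^2+b^2)+\tfrac{c_n}{2}(a-b)^2$ gives the pointwise lower bound $\widetilde{e}-c_n p\ge\tfrac{1-c}{2}\widetilde{e}-O(\delta\widetilde{e})$ on $\Omega_n'$; combined with \eqref{controlspread} and $\delta$ small compared to $1-c$, this upgrades the estimate to \eqref{vanishspread}.

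The main obstacle is the uniform-in-$n$ control of the flux term $\Phi_n$ in \eqref{controlspread}: one must arrange the ball-merging and dyadic-annulus bookkeeping of the second step so that the admissible radii $R_n^k$, the separation $10R_n^k$ and the $M_k$ available scales are mutually consistent, and then use the divergence-free structure of $J_n$ together with the cancellation between the Poincar\'e constant and the cut-off gradient so that no unfavourable power of $R_n^k$ survives. This is precisely the technical heart of the argument of \cite{BGS}, which we reproduce with the required modifications in the appendix.
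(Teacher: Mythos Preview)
Your proposal is correct and follows essentially the same route as the paper's proof in Appendix~\ref{appendiceprof}: Vitali covering of $A_n^\delta$ with a uniformly bounded number of balls, an iterated merging/separation procedure (the paper's Lemma~\ref{coversepar}) to make the centres $10R_n^k$-separated, a pigeonhole choice of a good dyadic annulus carrying energy $\lesssim\widetilde{E}_n/M_k$, and a diagonal extraction in $k$. The only cosmetic difference is in the derivation of \eqref{controlspread}: the paper tests \eqref{elltore} against globally defined cut-off functions $\check{\phi}_n,\check{\rho}_n$ (equal to $\phi_n,\rho_n$ on the exterior, constant inside the balls, transitioning on the good annulus) and integrates over the whole torus, so that no genuine boundary terms appear and the error lives on the annulus; you instead integrate directly over $\Omega_n'$ and handle the resulting boundary flux via the divergence-free structure of $J_n=\chi(\rho_n)\nabla\phi_n-c_n(\rho_n-1)e_1$, which lets you subtract the annulus average of $\phi_n$ before applying Poincar\'e--Wirtinger. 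Both arguments are equivalent and yield the same estimate; your observation that $\int_{\partial B}J_n\cdot\nu=0$ is a clean way to see why the Poincar\'e constant does not bring in a bad power of $R_n^k$. The $c<1$ improvement is handled identically in both.
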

\begin{rmq}
Basically, the lemma states that there are two areas: several balls far from each other on which non 
trivial profiles persist as $n\rightarrow \infty$, and a rest where there may be 
some ``spreading'' contribution to the total energy, but which is alsmot equal to the spreading 
part of the momentum. If $c<1$ there is no spreading.\\
Note also that $l\in \N^*$ means that ``pure spreading'' does not occur.
\end{rmq}

The better estimate available if $c<1$ makes this case quite simpler. Actually a consequence of 
\eqref{csharp} is that $c\geq 1$ does not occur, unfortunately, the existence of minimizers is a prerequisite 
to this estimate.\\
An interesting alternative approach would have been to prove directly that there exists no solution to 
\eqref{TWmod} if $c\geq 1$, as was done in \cite{Gravejat} for the Gross-Pitaevskii case.
\paragraph{The case $c\geq 1$}
\subparagraph{The dichotomy scenario} In this paragraph, we show that the sequence of minimizers 
can not split in several profiles.

\begin{prop}
In lemma \ref{profildec}, we have for $\delta$ small enough $l=1$. 
\end{prop}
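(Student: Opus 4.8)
The plan is to run a Lions-type concentration--compactness argument on the profile decomposition of Lemma~\ref{profildec}, using the strict subadditivity of $\wE_{\min}$ (Proposition~\ref{posD}). First I would pass to the limit $n\to\infty$ near each centre $x_n^i$: by the uniform elliptic bounds of Lemma~\ref{existsmooth} (carried over to $\mathbb T_n^2$) and local compactness, along a subsequence $(\rho_n(\cdot+x_n^i),\phi_n(\cdot+x_n^i))$ converges in $H^j_{\mathrm{loc}}\times H^{j-1}_{\mathrm{loc}}$ for every $j$ to $(\rho^i,\phi^i)$, a solution of \eqref{TWmod} with the common speed $c=\lim_n c_n\in[1,M]$ and with $|\rho^i(0)-1|\geq\delta$, so $\rho^i\not\equiv 1$. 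Exactly as in Proposition~\ref{convprofile}, the Pohozaev identity \eqref{poho3}, namely $cP(\rho^i,\phi^i)=2\int\widetilde G(\rho^i)\,dx>0$, then forces $p_i:=P(\rho^i,\phi^i)>0$ for each $i$.

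Next I would split the energy and the momentum along this decomposition. Writing $\mathcal B_n^k=\bigcup_{i\leq l}B(x_n^i,R_n^k)$, the strong local convergence on the balls $B(0,R^k)$, together with the fact that the profiles lie in $\cap_j H^j$ (so their energy and momentum densities are integrable), gives, after letting $n\to\infty$ and then $k\to\infty$, the identities $\lim_n\wE_n(\rho_n,\phi_n)=\sum_{i=1}^l\wE(\rho^i,\phi^i)+\eta$ and $p=\sum_{i=1}^l p_i+\pi$, where $\eta\geq 0$ is the spreading energy $\lim_k\lim_n\int_{(\mathcal B_n^k)^c}\widetilde e(\rho_n,\phi_n)\,dx$ and $\pi$ the corresponding spreading momentum; here one uses that $|\rho_n-1|<\delta$ off the balls, so that $\chi(\rho_n)=\rho_n$ and $\widetilde G(\rho_n)=\tfrac12(\rho_n-1)^2+O(|\rho_n-1|^3)$ there, the cubic term being $L^1$-small by the Sobolev embedding $H^1\hookrightarrow L^3$. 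Passing to the limit in \eqref{controlspread} (the term $\wE_n/M_k$ vanishes as $k\to\infty$) yields $|\eta-c\pi|\leq C\delta\,\eta$; since $\eta\geq0$, $c>0$ and $\delta$ is small, this forces $\pi\geq 0$ and, using $c\geq1$, $\eta\geq \pi(1-C\delta)$ — in particular no ``pure spreading'' with negative or vanishing momentum can carry positive energy.

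Now assume for contradiction that $l\geq 2$. Translating so that ball $1$ lies far from all other balls, I would cut $(\rho_n,\phi_n)$ across an annulus around $B(x_n^1,R_n^k)$ into two configurations on $\mathbb T_n^2$: one supported near ball $1$, of momentum $p_1^n\to p_1$ and energy $\to\wE(\rho^1,\phi^1)\geq\wE_{\min}(p_1)$, and its complement, of momentum $q_n\to p-p_1\in(0,p)$ and energy $\to R:=\sum_{i\geq 2}\wE(\rho^i,\phi^i)+\eta$. Since the complement is an admissible torus configuration, $R\geq\liminf_n(\wE_n)_{\min}(q_n)=\wE_{\min}(p-p_1)$, where one uses Proposition~\ref{convemin} for ``$\overline\lim\leq$'' and a cut-off/unrolling of a torus minimiser for the reverse inequality; equivalently, since $R\geq\sum_{i\geq2}\wE_{\min}(p_i)+\eta$, it suffices to know $\eta\geq\wE_{\min}(\pi)$, which holds because the excised ``spreading part'' of $(\rho_n,\phi_n)$ is itself an admissible torus configuration of momentum $\to\pi$. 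Thus $\lim_n\wE_n(\rho_n,\phi_n)=\wE(\rho^1,\phi^1)+R\geq\wE_{\min}(p_1)+\wE_{\min}(p-p_1)$; by strict subadditivity (Proposition~\ref{posD}, both arguments positive) the right-hand side is $>\wE_{\min}(p)$, while $\lim_n\wE_n(\rho_n,\phi_n)=\lim_n(\wE_n)_{\min}(p)\leq\wE_{\min}(p)$ by Proposition~\ref{convemin}. This contradiction shows $l=1$.

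The main obstacle is the localization step in the last paragraph. Truncating $(\rho_n,\phi_n)$ spatially is delicate because only $\nabla\phi_n$, not $\phi_n$ itself, is controlled: one must cut the velocity field $\nabla\phi_n$ (replacing $\phi_n$ by $\zeta_n\phi_n$ and absorbing the $\phi_n\nabla\zeta_n$ term), which is precisely why the whole analysis is performed on the torus, and one must verify that the energy and momentum lost in the transition annulus are negligible as $n\to\infty$ and then $k\to\infty$ — this is where the uniform bounds of Lemma~\ref{existsmooth}, the smallness of $\delta$ (so that $\chi(\rho_n)=\rho_n$ there), and \eqref{controlspread} (which, through $c\geq1$, prevents the complement from having anomalously small energy relative to its momentum) all come in. By contrast, in the complementary case $c<1$ the estimate \eqref{vanishspread} kills the spreading outright ($\eta=\pi=0$), and the same scheme reduces immediately to $\sum_i\wE_{\min}(p_i)>\wE_{\min}(\sum_i p_i)$; the point of isolating the case $c\geq1$ is exactly that the spreading region must be handled.
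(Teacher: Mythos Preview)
Your overall concentration--compactness scheme is correct in spirit and close to the paper's, but there is a genuine gap at the step $R\geq \wE_{\min}(p-p_1)$ (equivalently $\eta\geq \wE_{\min}(\pi)$). Both justifications you offer rely on the inequality $\liminf_n(\wE_n)_{\min}(q)\geq \wE_{\min}(q)$: either directly, by saying the cut complement is an admissible torus configuration, or indirectly, by saying the spreading part is. But Proposition~\ref{convemin} gives only the opposite inequality $\overline{\lim}\leq$, and the ``unrolling'' you invoke for the reverse direction is precisely the hard content of the whole section: a torus minimiser is periodic, so to view it on $\R^2$ you must cut it off, and controlling the energy/momentum loss of that cutoff is exactly the profile--decomposition machinery you are in the middle of using. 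So this step is circular as written.

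What actually survives from \eqref{controlspread} and $c\geq 1$ is only $\eta\geq \wE_{\min}(\pi)-C\delta\eta$, hence $D(p_1,p-p_1)\leq C\delta\eta$ rather than $\leq 0$. To conclude you must let $\delta\to 0$, but then the profiles, and in particular $p_1$, depend on $\delta$ and could degenerate to $0$, killing the strict subadditivity. The paper closes this by first noting that $l(\delta)$ is nonincreasing: if $l\geq 2$ for some fixed $\varepsilon>0$, there are two points $y_n^1,y_n^2$ with $|\rho_n(y_n^i)-1|\geq\varepsilon$ and $d(y_n^1,y_n^2)\to\infty$, which persist in the decomposition for every $\delta\leq\varepsilon$. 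The uniform $W^{1,\infty}$ bound on $\rho_n$ then gives $\int_{B(x_n^i,R_n^k)}\widetilde G(\rho_n)\geq q>0$ with $q$ \emph{independent of $\delta$}, and via the Pohozaev identity $cp_i=2\int\widetilde G(\rho^i)$ one gets $p_1,p_2\geq 2q/M$. Monotonicity of $D$ (Proposition~\ref{posD}) yields $D(p_1,p_2)\geq D(2q/M,2q/M)>0$ uniformly in $\delta$, so $0<D(2q/M,2q/M)\leq C\delta\mu$ gives the contradiction as $\delta\to 0$. This $\delta$-independent lower bound is the missing idea in your argument.
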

\begin{proof}
First we note that $l$ as a function of $\delta$ is nonincreasing, as the existence 
of $l$ points such that $d(x_n^i,x_n^j)\rightarrow \infty$ and $|\rho(x_n^i)-1|\geq \varepsilon$ 
prevents from covering $A_n^\delta$ for $\delta<\varepsilon$ by less than $l$ balls of radius bounded in $n$.
We assume by contradiction that there exists 
$\varepsilon>0$, such that $l(\varepsilon)\geq 2$. This implies the existence of 
$(y_n^1,y_n^2)\in \mathbb{T}_n^2$ such that $|\rho(y_n^i)-1|\geq \varepsilon,\ 
d(y_n^1,y_n^2)\longrightarrow_n \infty$. For $0<\delta\leq \varepsilon$, we can assume up to a reindexation
\begin{equation}\label{refineprofil}
\forall\,n\geq k,\ y_n^1\in B(x_n^1,R_n^k),\ y_n^2\in B(x_n^2,R_n^k).
\end{equation}
Since $\|\nabla \rho_n\|_\infty$ is bounded uniformly in $n$, we also remark
\begin{equation}\label{minorprofile}
\exists q>0\text{ independent of }\delta,\ 
\forall\,n\geq k\geq 1,\ \forall\,i=1,2,\ \int_{B(x_n^i,R_n^k)} \widetilde{G}(\rho_n)\geq q.
\end{equation}
We apply proposition \ref{convprofile} 
to $(\rho_n,\phi_n)(\cdot-x_n^i)$ : up to an extraction there exists $(\rho^i,\nabla \phi^i)\in 
\cap_{j\geq 0} H^j(\R^2)$, solutions of \eqref{TWmod} with speed $c$ and
\begin{equation*}
\forall\,K\text{ compact}, \forall\,j\geq 1,\ \|\rho_n(\cdot-x_n^i)-\rho^i\|_{H^j(K)}+\|\nabla 
\phi_n(\cdot-x_n^i)-\nabla\phi^i\|_{H^{j-1}(K)}\rightarrow 0.
\end{equation*}
We can assume that $\widetilde{E_n}(\rho^n,\phi^n)$ converges and thus
\begin{equation}\label{spreadloc}
\forall\,k\geq 1,\ \exists\,(\mu_k,\nu_k)\in \R^+\times \R:
\left\{
\begin{array}{ll}
\displaystyle \lim_n \int_{\mathbb{T}_n^2}\widetilde{e}(\rho_n,\phi_n)dx=\sum_{i=1}^l\int_{B(0,R^k)}
\widetilde{e}(\rho^i,\phi^i)dx+\mu_k,\\
\displaystyle \lim_n \int_{\mathbb{T}_n^2}p(\rho_n,\phi_n)dx=\sum_{i=1}^l\int_{B(0,R^k)}
p(\rho^i,\phi^i)dx+\nu_k,\\
\displaystyle |\mu_k-c\nu_k|\leq C\bigg(\delta \mu_k+\frac{\lim_n\widetilde{E_n}(\rho_n,\phi_n)}{M_k}\bigg),
\end{array}
\right.
\end{equation}
where $B(0,R^k)$ is now the usual ball of $\R^2$.
Letting $k\longrightarrow \infty$ implies
\begin{equation*}
\lim_n \widetilde{E_n}(\rho^n,\phi^n)=\sum_{i=1}^l\widetilde{E}(\rho^i,\phi^i)+\mu,\ \lim_n P_n(\rho_n,\phi_n)=
\sum_{i=1}^l P(\rho^i,\phi^i)+\nu,\ |\mu-c\nu|\leq C\delta \mu.
\end{equation*}
Since $C$ is an absolute constant, we can assume $C\delta<1$, thus $\nu\geq 0$.
Let us set $p_i:=P(\rho^i,\phi^i)$. Since $(\rho_i,\nabla \phi_i)$ is a solution of \eqref{TWmod}, 
identity \eqref{poho3} is true, namely 
\begin{equation*}
cp_i=2\int_{\R^2}\widetilde{G}(\rho_i)dx>0,
\end{equation*}
thus $p_i>0$ and from \eqref{minorprofile} $p_1\geq 2q/M,\ p_2\geq 2q/M$. On the other hand, we know 
that $\forall\,n,\ P_n(\rho_n,\phi_n)=p$, so that by subadditivity and proposition \ref{convemin}
\begin{eqnarray*}
\sum_{i=1}^l\widetilde{E}(\rho^i,\phi^i)+\mu=
\lim_n \widetilde{E_n}(\rho_n,\phi_n)\leq \widetilde{E}_{\min}(p)
=\widetilde{E}_{\min}\bigg(\sum_{i=1}^lp_i+\nu\bigg)\\
\Rightarrow 
\widetilde{E}(\rho^1,\phi^1)+\widetilde{E}(\rho^2,\phi^2)+\sum_{i=3}^l\widetilde{E}_{\min}(p_i)\leq 
\widetilde{E}_{\min}\bigg(\sum_{i=1}^lp_i\bigg)+\widetilde{E}_{\min}(\nu)-\mu
\end{eqnarray*}
Next we use proposition \ref{estimenergie}: $\widetilde{E}_{\min}(\nu)\leq \nu\leq c\nu$ , 
subadditivity and $|\mu-c\nu|\leq C\delta \mu$
\begin{eqnarray*}
\wE_{min}(p_1)+\wE_{min}(p_2)\leq
\widetilde{E}(\rho^1,\phi^1)+\widetilde{E}(\rho^2,\phi^2)\leq 
\widetilde{E}_{\min}\bigg(p_1+p_2\bigg)+C\delta \mu
\end{eqnarray*}
But from proposition \ref{posD}, 
$\wE_{min}(p_1)+\wE_{min}(p_2)-\widetilde{E}_{\min}\bigg(p_1+p_2\bigg)\geq D(2q/M,2q/M)>0$, while letting 
$\delta\rightarrow 0$ we find
\begin{equation}
D(2q/M,2q/M)\leq  0,
\end{equation}
which is a contradiction.

\end{proof}
\subparagraph{The spreading scenario} Ruling out this scenario follows the same scheme, but simpler.
Since $l=1$, from the same computations as in the previous paragraph for any $\delta>0$ there exists 
$(\rho,\phi),\ \mu\geq 0,\ p_1>0$ such that $P(\rho,\phi)=p_1$ and 
\begin{equation*}
\lim_n (\wE_n)_{\min}(\rho_n,\phi_n)=\wE(\rho,\phi)+\mu,\ \lim_nP_n(\rho_n,\phi_n)=p_1+\nu,\ |c\nu-\mu|\leq C\delta \mu.
\end{equation*}
We use $\wE_{\min}(\nu)\leq \nu\leq c\nu\leq \mu+C\delta \mu$ so that
$$
E_{\min}(p_1)+E_{\min}(\nu)\leq E_{\min}(p_1)+\mu+C\delta \mu= 
\lim_nE_n(\rho_n,\phi_n)+C\delta \mu\leq E_{\min}(p_1+\nu)+C\delta \mu.
$$
proposition \ref{posD} with $q=\min(\nu,p_1)$ implies
$0\leq -D(q,q)+C\delta\mu$, letting $\delta\rightarrow 0$ we get $q=0$, thus $\mu=\nu=0$.
\subparagraph{Conclusion}
We have obtained that there exists $(\rho,\nabla \phi)\in (\cap_{j\geq 0}H^j)^2$ such that 
\begin{eqnarray*}
\forall\,K\text{ compact}, \|\rho_n-\rho\|_{H^j(K)}+\|\nabla\phi_n-\nabla \phi\|_{H^j(K)}
\longrightarrow _n 0,\\
\lim_n \widetilde{E}_n(\rho_n,\phi_n)=E(\rho,\phi),\ 
p=\lim_n P_n(\rho_n,\phi_n)=P(\rho,\phi).
\end{eqnarray*}
this ends the proof of \ref{mainprop} in the case $c\geq 1$.

\paragraph{The case $c<1$} With the same notations as in the case $c\geq 1$ we have the existence 
of $(\rho^i,\nabla\phi^i)_{1\leq i\leq l}$ such that 
$\|\rho_n(\cdot-x_n^i)-\rho^i\|_{H^j(K)}\longrightarrow 0,\ \|\nabla\phi_n^i(\cdot-x_n^i)
-\nabla\phi^i\|_{H^j(K)}\longrightarrow 0$. Let us fix $\delta$ small enough so that 
inequality \eqref{vanishspread} is true. Thanks to the pointwise inequality 
$|(\rho-1)\partial_1\phi|\lesssim C\widetilde{e}(\rho,\phi)$ we get the following identities 
\begin{eqnarray*}
\lim_n\int \widetilde{E_n}(\rho_n,\phi_n)&=& \lim_k\sum_{i=1}^l\int_{B(0,R^k)}\widetilde{e}(\rho^i,
\phi^i)dx+O(1/M_k)=\sum_{i=1}^l\widetilde{E}(\rho^i,\phi^i),\\
p&=&\lim_n\int P_n(\rho_n,\phi_n)dx= \sum_{i=1}^lP(\rho^i,\phi^i).
\end{eqnarray*}
For $1\leq i\leq l$, set $p_i=P(\rho_i,\phi_i)$.  If for some $\delta>0$, $l\geq 2$, then 
we have as for the case $c\geq 1$ 
\begin{eqnarray*}
\widetilde{E}_{\min}(p_1)+\widetilde{E}_{\min}(p_2)+\sum_{i=3}^l\widetilde{E}_{\min}(p_i)\leq
\widetilde{E}_{\min}(p_1+p_2)+\sum_{i=1}^3\widetilde{E_{\min}}(p_i),
\end{eqnarray*}
which leads to the absurd inequality $0\leq -D(p_1,p_2)$. Thus $l=1$, the conclusion is the same as for $c\geq 1$.

\section{Pohozaev type identities and applications}\label{poho}
In this section we complete the proof of theorem \ref{maintheo} with the sharp estimates on 
the energy near $p=0$.\\
The first proposition does not rely on the fact 
that the dimension $d$ is $2$, therefore we state it in general settings. Since 
the solutions to \eqref{TWmod} that we constructed in the previous section are smooth
we state our identities for smooth functions, but they 
are true under much weaker assumptions.\\
For conciseness we write $K$ for $K(\chi(\rho))$, $K'=d(K\circ \chi)/d\rho$.
\begin{prop}\label{proppoho}
Let $(\rho,\phi)$ be a smooth finite energy solution of \eqref{TWmod}. If $(\rho-1,\phi)\in (H^2)^2$, 
then it satisfies the Pohozaev identities
\begin{eqnarray}\label{poho1}
\widetilde{E}(\rho,\phi)&=&\int_{\R^d} \chi|\dun \phi|^2+K|\dun \rho|^2dx,\\
\label{poho2}
\forall\,2\leq j\leq d,\ \widetilde{E}(\rho,\phi)&=&\int_{\R^d}
\chi|\partial_j \phi|^2+K|\partial_j \rho|^2dx+cP(\rho,\phi),\\
\label{poho3}
\frac{d-2}{2}\int_{\R^d}\chi |\nabla \phi|^2+K|\nabla \rho|^2dx&=&-d\int_{\R^d}\widetilde{G}(\rho)
dx+(d-1)cP(\rho,\phi).
\end{eqnarray}
Moreover we have 
\begin{equation}\label{energie1}
cP(\rho,\phi)=\int_{\R^d} \chi|\nabla \phi|^2dx
\end{equation}
\end{prop}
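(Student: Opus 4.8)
The plan is to use the variational structure of \eqref{TWmod}. Observing that $-K(\chi(\rho))\Delta\rho-\tfrac12(K\circ\chi)'|\nabla\rho|^2$ is exactly the variational derivative with respect to $\rho$ of $\rho\mapsto\int\tfrac12K(\chi(\rho))|\nabla\rho|^2\,dx$, the system \eqref{TWmod} reads $\frac{\delta\widetilde E}{\delta\rho}=c\,\partial_1\phi$ and $\frac{\delta\widetilde E}{\delta\phi}=-\mathrm{div}(\chi\nabla\phi)=-c\,\partial_1\rho$; in particular $(\rho,\phi)$ is a critical point of $\widetilde E-cP$ for the given $c$. Identity \eqref{energie1} is then immediate: pairing the mass equation with $\phi$ and integrating by parts, $\int\mathrm{div}(\chi\nabla\phi)\,\phi\,dx=-\int\chi|\nabla\phi|^2\,dx$, while $\int(-c\,\partial_1\rho)\phi\,dx=\int\big(-c\,\partial_1(\rho-1)\big)\phi\,dx=c\int(\rho-1)\partial_1\phi\,dx=cP(\rho,\phi)$, the boundary fluxes at infinity vanishing because $(\rho-1,\phi)\in(H^2)^2$ forces decay (the integration by parts being made rigorous through a cut-off argument).

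For the three Pohozaev identities I would exploit the anisotropic dilations. Set $(\rho^{\lambda},\phi^{\lambda})(x):=(\rho,\phi)(\lambda_1x_1,\dots,\lambda_dx_d)$ for $\lambda$ close to $(1,\dots,1)$, and write $a_k:=\tfrac12\int_{\R^d}(\chi|\partial_k\phi|^2+K|\partial_k\rho|^2)\,dx$ and $g:=\int_{\R^d}\widetilde G(\rho)\,dx$, so $\widetilde E=\sum_ka_k+g$. A change of variables gives $\widetilde E(\rho^{\lambda},\phi^{\lambda})=\big(\textstyle\prod_i\lambda_i\big)^{-1}\big(\sum_k\lambda_k^2a_k+g\big)$ and $P(\rho^{\lambda},\phi^{\lambda})=\big(\prod_i\lambda_i\big)^{-1}\lambda_1P(\rho,\phi)$. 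Since $(\rho,\phi)$ is a critical point of $\widetilde E-cP$, for each $j$ the derivative $\partial_{\lambda_j}\big|_{\lambda=(1,\dots,1)}(\widetilde E-cP)(\rho^{\lambda},\phi^{\lambda})$ vanishes, as it equals the pairing of $\delta(\widetilde E-cP)(\rho,\phi)$ against $(x_j\partial_j\rho,\,x_j\partial_j\phi)$, which is zero by \eqref{TWmod}. Differentiating the explicit formulas at $\lambda=(1,\dots,1)$: for $j=1$ we get $2a_1-\widetilde E=0$, i.e. \eqref{poho1}; for $2\leq j\leq d$ we get $2a_j-\widetilde E+cP=0$, i.e. \eqref{poho2}. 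Adding the $d$ relations $\widetilde E=2a_1$ and $\widetilde E=2a_j+cP$ ($2\leq j\leq d$) gives $\int_{\R^d}(\chi|\nabla\phi|^2+K|\nabla\rho|^2)\,dx=d\,\widetilde E-(d-1)cP$, and substituting $\widetilde E=\tfrac12\int_{\R^d}(\chi|\nabla\phi|^2+K|\nabla\rho|^2)\,dx+g$ and rearranging yields \eqref{poho3}.

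Equivalently, and avoiding any appeal to the differentiability of the functional, each of these relations can be derived directly by pairing the mass equation with $x_j\partial_j\phi$ and the momentum equation with $x_j\partial_j\rho$ (for fixed $j$, no summation over $j$), adding the results and integrating by parts: the terms recombine into $a_j$, $g$ and $cP$ exactly as above, with \eqref{energie1} absorbing the $c$-contributions. I expect the only genuine difficulty to be the justification of these integrations by parts --- vanishing of the boundary integrals over large spheres, and legitimacy of differentiating under the integral sign in $\lambda$ --- which is routine given that the solution is smooth and $(\rho-1,\phi)\in(H^2)^2$, hence decays at infinity, and is handled by the usual cut-off plus dominated-convergence argument; all the substance lies in the elementary bookkeeping of the relations above.
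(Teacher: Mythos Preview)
Your argument is correct. The dilation approach you use is a clean alternative packaging of the paper's proof: the paper works directly with the multipliers $x_j\partial_j\phi$ and $x_j\partial_j\rho$, pairing them against the mass and momentum equations respectively and integrating by parts term by term, whereas you compute $\widetilde E(\rho^\lambda,\phi^\lambda)$ and $P(\rho^\lambda,\phi^\lambda)$ globally and differentiate in $\lambda_j$. As you yourself observe, these are the same computation --- the $\lambda_j$-derivative at $\lambda=(1,\dots,1)$ \emph{is} the pairing of $\delta(\widetilde E-cP)$ against $(x_j\partial_j\rho,\,x_j\partial_j\phi)$ --- so the two routes differ only in bookkeeping. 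Your version has the advantage that the algebra is transparent (no need to track the $K'$ cross terms by hand), at the cost of a small extra justification that the functional is differentiable along the dilation curve; the paper's version avoids that but pays with a longer integration-by-parts computation. Either way the substantive point, as both you and the paper note, is the rigorous justification of the boundary terms, for which the paper simply refers to \cite{Gravejat}.

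One small remark: your parenthetical ``with \eqref{energie1} absorbing the $c$-contributions'' in the last paragraph is not quite right --- identity \eqref{energie1} plays no role in deriving \eqref{poho1}--\eqref{poho3}, and indeed in your own dilation derivation you never use it. The $c$-terms come out correctly from the $\lambda_j$-derivative of $P(\rho^\lambda,\phi^\lambda)$ alone.
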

\begin{proof}
Multiply the first equation of \eqref{TWmod} by $x_1\partial_1\phi$ and integrate (note that 
the integrals are not clearly convergent, for a rigorous argument see e.g. proposition 5 in 
\cite{Gravejat}):
\begin{eqnarray}\nonumber
\int_{\R^d} -cx_1\dun \rho\dun \phi -\chi\nabla \phi \cdot \nabla(x_1\dun \phi)dx
&=&\int_{\R^d} -cx_1\dun \rho\dun \phi -\chi |\dun\phi|^2 -\chi x_1\dun \frac{|\nabla \phi|^2}
{2}dx\\
\label{pohoz1}
\nonumber 
&=&\int_{\R^d} -cx_1\dun \rho \dun \phi -\chi |\dun\phi|^2 +\frac{\chi |\nabla \phi|^2}{2}
\\
&&\hspace{4cm}+\frac{x_1\dun \chi |\nabla \phi|^2}{2}dx\\
\nonumber&=&0
\end{eqnarray}
Now the multiplication of the second equation of \eqref{TWmod} by $x_1\dun \rho$ and integration 
gives
\begin{eqnarray*}
0=\int_{\R^d} -cx_1\dun \phi\dun \rho +\frac{x_1\chi'\dun \rho|\nabla \phi|^2}{2}
&-&\bigg(K\Delta \rho+\frac{1}{2} K'|\nabla \rho|^2\bigg)x_1\dun\rho+\widetilde{g}(\rho)x_1
\dun \rho\,dx,
\\
\text{with} \int_{\R^d}-K\Delta \rho x_1\dun \rho\, dx&=& \int_{\R^d}K|\dun \rho|^2
+x_1K'|\nabla\rho|^2\dun \rho +x_1K\dun\bigg(\frac{|\nabla \rho|^2}{2}\bigg)\,dx\\
&=&\int_{\R^d}K|\dun \rho|^2+\frac{x_1K'|\nabla\rho|^2\dun \rho}{2} 
-\frac{K|\nabla \rho|^2}{2}\,dx,\\
\text{and }\int_{\R^d}x_1\widetilde{g}(\rho)\dun \rho&=&\int_{\R^d}-\widetilde{G}(\rho)dx,
\end{eqnarray*}
so that
\begin{equation}\label{pohoz2}
0=\int_{\R^d}-cx_1\dun \phi\dun \rho +\frac{x_1\chi'\dun \rho |\nabla \phi|^2}{2}+K|\dun \rho|^2-
\frac{K|\nabla \rho|^2}{2}-\widetilde{G}(\rho)dx.
\end{equation}
Finally, if we add \eqref{pohoz2} to \eqref{pohoz1} we obtain \eqref{poho1}
\begin{equation*}
0=\int_{\R^d}\chi\frac{|\nabla \phi|^2}{2}+K\frac{|\nabla \rho|^2}{2}+\widetilde{G}(\rho)
-\chi|\dun\phi|^2
-K|\dun\rho|^2.
\end{equation*}
The same computations with multipliers $x_j\partial_j\rho$ and $x_j\partial_j\phi$, $j\geq 2$ 
lead to 
\begin{equation*}
0=\int \frac{\chi|\nabla \phi|^2+K|\nabla \rho|^2}{2}+\widetilde{G}(\rho)
-\chi|\partial_j \phi|^2-K|\partial_j
\rho|^2-c\big(\dun \rho x_j\partial_j\phi-c\partial_j\rho x_j\dun \phi\big)dx.
\end{equation*}
This gives \eqref{poho2}, indeed an integration by part shows
\begin{eqnarray*}
\int \dun \rho x_j\partial_j\phi-c\partial_j\rho x_j\dun \phi\big)dx&=&
\int -x_j(\rho-1)\partial_1\partial_j\phi+x_j(\rho-1)\partial_1\partial_j\phi
+(\rho-1)\partial_1\phi dx\\
&=&P(\rho,\phi).
\end{eqnarray*}
The third identity is obtained by summing the previous ones. The last identity is obtained by 
multiplying the first equation in \eqref{TWmod} by $\phi$ and integration.
\end{proof}

\begin{prop}\label{estimvitesse}
Let $p_0>0$ given by prop \ref{estimenergie} $p_0,$ $\alpha,\beta$ positive
such that for any (smooth) minimizer of speed $c$ and momentum $p\leq p_0$, 
$$\alpha p^2\leq 1-c\leq \beta p^2.$$
\end{prop}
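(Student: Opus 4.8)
The plan is to combine the Pohozaev identities of Proposition~\ref{proppoho} (with $d=2$) with the upper bound $\widetilde{E}_{\min}(p)\le p-\alpha p^3$ of Proposition~\ref{estimenergie} and a rescaling to the Kadomtsev--Petviashvili (KP) régime. Fix a smooth constrained minimiser $(\rho_p,\phi_p)$ of speed $c$ and momentum $p\le p_0$; for $p_0$ small, Proposition~\ref{mainest} gives $\|\rho_p-1\|_\infty\ll 1$, so $\chi(\rho_p)=\rho_p$ and $\widetilde{G}=G$ on the range of $\rho_p$. The algebraic backbone is the identity
\begin{equation*}
\widetilde{E}(\rho_p,\phi_p)=cp+\tfrac12\int_{\R^2}K(\chi(\rho_p))|\nabla\rho_p|^2\,dx ,
\end{equation*}
obtained by substituting $cP(\rho_p,\phi_p)=\int\chi(\rho_p)|\nabla\phi_p|^2$ (identity \eqref{energie1}) and $cP(\rho_p,\phi_p)=2\int\widetilde{G}(\rho_p)$ (identity \eqref{poho3} with $d=2$) into the definition \eqref{modenergy} of $\widetilde{E}$. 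Since $\widetilde{E}(\rho_p,\phi_p)=\widetilde{E}_{\min}(p)\le p-\alpha p^3$ and the last term is nonnegative, this forces $cp\le p-\alpha p^3$, i.e. $1-c\ge\alpha p^2$: the left-hand inequality.

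For $1-c\le\beta p^2$ I would first reduce to a sharp $L^\infty$ bound on $\rho_p-1$. Applying Cauchy--Schwarz with weight $\sqrt{\chi(\rho_p)}$ to $p=P(\rho_p,\phi_p)=\int(\rho_p-1)\partial_1\phi_p\,dx$, then using $|\partial_1\phi_p|^2\le|\nabla\phi_p|^2$, the pointwise bound $\tfrac{(\rho-1)^2}{\chi(\rho)}\le(1+C|\rho-1|)\,2\widetilde{G}(\rho)$ near $\rho=1$ (both sides equal $(\rho-1)^2+O((\rho-1)^3)$), and the two Pohozaev relations above,
\begin{equation*}
p^2\le\Big(\int\tfrac{(\rho_p-1)^2}{\chi(\rho_p)}\,dx\Big)\Big(\int\chi(\rho_p)|\nabla\phi_p|^2\,dx\Big)\le(1+C\|\rho_p-1\|_\infty)\,c^2p^2 ,
\end{equation*}
hence $1-c\le 1-c^2\le C\|\rho_p-1\|_\infty$. (In particular $1-c\lesssim\sqrt p$ via Proposition~\ref{mainest}, so $c\to1$ as $p\to0$.) It thus remains to prove the sharp smallness $\|\rho_p-1\|_\infty\lesssim p^2$.

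This is the heart of the proof, and I would obtain it by rescaling into the KP régime. Set $\varepsilon^2:=1-c^2$ (legitimate since $0\le c<1$, and $\varepsilon\to0$), and
\begin{equation*}
A(z):=\varepsilon^{-2}\big(\rho_p(\varepsilon^{-1}z_1,\varepsilon^{-2}z_2)-1\big),\qquad \varphi(z):=\varepsilon^{-1}\phi_p(\varepsilon^{-1}z_1,\varepsilon^{-2}z_2).
\end{equation*}
Then $(A,\varphi)$ solves the rescaled form of \eqref{TWmod}, and combining its two equations as in the derivation of \eqref{kpasymp} shows that $A$ solves that KP-type equation with an $O(\varepsilon^2)$ right-hand side. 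The backbone identity, the Pohozaev relations \eqref{poho1}--\eqref{poho2}, and the elliptic bootstrap of Proposition~\ref{mainest} bound $\|A\|_{L^2}^2$, $\|\partial_1A\|_{L^2}^2$, $\|\partial_2\partial_1^{-1}A\|_{L^2}^2$ (and $\varphi$ in the KP energy space) by $\lesssim m:=\int A\,\partial_1\varphi\,dz=p/\varepsilon$, and testing the rescaled equation against $\partial_1^{-1}A$ gives, up to $O(\varepsilon^2)$, the KP virial relation
\begin{equation*}
\|A\|_{L^2}^2+\|\partial_2\partial_1^{-1}A\|_{L^2}^2+K(1)\|\partial_1A\|_{L^2}^2=-\tfrac\gamma2\int_{\R^2}A^3\,dz ,
\end{equation*}
with $\gamma=3+g''(1)\ne0$ by hypothesis. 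Because $\|\rho_p-1\|_\infty=\varepsilon^2\|A\|_\infty$ and $\varepsilon^2\sim 1-c$, the bound $\|\rho_p-1\|_\infty\lesssim p^2$ is equivalent to $m=p/\varepsilon$ staying bounded away from $0$, and this is the step I expect to be the real obstacle. If $m\to0$ along some $p_n\to0$, the uniform bounds (after translation) would produce a weak limit solving the normalised KP equation \eqref{kpadim}; the lower bound $\|\rho_n-1\|_\infty\gtrsim p_n^2$ of Proposition~\ref{estimenergie} together with the virial relation (which uses $\gamma\ne0$ to exclude the trivial profile) force this limit to be a nontrivial finite-energy KP solitary wave carrying zero momentum --- impossible. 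Turning this into a rigorous argument is a KP analogue of the concentration--compactness used for the existence of minimisers, and requires controlling the $O(\varepsilon^2)$ remainders uniformly in $p$ and invoking the regularizing properties of KP to get $L^\infty$ control on the profiles; that is where the genuine work lies.

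Once $\alpha p^2\le1-c\le\beta p^2$ is established, the sharp lower bound on the energy announced after Proposition~\ref{posD} follows at once from the backbone identity: $\widetilde{E}_{\min}(p)=cp+\tfrac12\int K|\nabla\rho_p|^2\ge cp\ge p-\beta p^3$. Combined with Proposition~\ref{estimenergie} this yields the two-sided bracketing of $\widetilde{E}_{\min}$, which after undoing the rescaling completes the estimates of Theorem~\ref{maintheo}.
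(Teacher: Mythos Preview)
Your argument for the lower bound $1-c\geq\alpha p^2$ is exactly the paper's: combine \eqref{poho3}, \eqref{energie1} and the upper bound of Proposition~\ref{estimenergie} to get $cp\le\widetilde{E}_{\min}(p)\le p-\alpha p^3$.

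For the upper bound you take a genuinely different route, and it does not close. The paper's argument is short and direct: one manipulates the two equations of \eqref{TWmod} into a single identity
\[
\big(K(1)\Delta^2-\Delta+c\,\partial_1^2\big)(\rho-1)=\Delta A+c\,\partial_1\mathrm{div}\,B,
\]
where $A,B$ are quadratic expressions with $\|A\|_{L^1}+\|B\|_{L^1}\lesssim\widetilde{E}_{\min}(p)$. Taking the Fourier transform and computing explicitly the $L^2$ norm of the symbol $|\xi|^2/\big(K(1)|\xi|^4+|\xi|^2-c\xi_1^2\big)$ yields $\|\rho-1\|_2^2\lesssim\widetilde{E}_{\min}(p)^2/\sqrt{1-c}$. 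Since $\|\rho-1\|_2^2\sim\int 2\widetilde{G}=cp$ by \eqref{poho3}, and $c\gtrsim1$ (from your own Cauchy--Schwarz step), one reads off $\sqrt{1-c}\lesssim\widetilde{E}_{\min}(p)/p\lesssim1\cdot p$, i.e. $1-c\lesssim p^2$. No KP rescaling, no compactness.

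Your KP approach, by contrast, reduces the question to showing $m=p/\varepsilon$ is bounded below, and then argues by contradiction via a limiting KP profile. The gap is precisely in the nontriviality of that profile. If $m_n\to0$ then all your a~priori bounds read $\|A_n\|_{L^2}^2,\|\partial_1A_n\|_{L^2}^2,\|\partial_2\partial_1^{-1}A_n\|_{L^2}^2\lesssim m_n\to0$, so the weak limit is $0$ and the virial identity becomes $0=0$; no contradiction. The lower bound $\|\rho_n-1\|_\infty\gtrsim p_n^2$ from Proposition~\ref{estimenergie} only gives $\|A_n\|_\infty\gtrsim m_n^2\to0$, which does not prevent vanishing. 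So the step you flag as ``where the genuine work lies'' is not merely technical: as stated, the argument does not produce a nontrivial limiting object. The paper sidesteps all of this with the Fourier multiplier estimate, which also explains why the hypothesis $\Gamma\neq0$ is not invoked in this part of the proof.
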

\begin{proof}
For $p\leq p_0$, let $(\rho,\phi)$ be such a minimiser. From \eqref{poho3}, 
\eqref{energie1} and proposition 
\ref{estimenergie}
$$cp=\frac{1}{2}\int 2\widetilde{G}+\chi|\nabla \phi|^2dx
\leq \widetilde{E}_{\min}(p)\leq p-\alpha p^3$$
which gives $1-c\geq \alpha p^2$. The other inequality follows an idea
from \cite{BGS}: applying $\dun$ to the first equation in \eqref{TWmod} gives 
\begin{equation*}
-c\dun^2\rho +\dun \Delta \phi +\dun \text{div}((\chi-1)\nabla\phi)=0.
\end{equation*}
Next, multiply the momentum equation by $K(1)/K$ and apply $\Delta$ :
\begin{equation*}
-c\Delta \dun \phi -K(1)\Delta^2 \rho+\Delta \rho
+\Delta\bigg(c\bigg(1-\frac{K(1)}{K}\bigg)\dun \phi+\frac{K(1) g}{K}-\rho 
+\frac{K(1)\chi'}{2K}|\nabla \phi|^2-\frac{K(1) K'}{2K}|\nabla \rho|^2\bigg)=0.
\end{equation*}
if we add these equalities we obtain
\begin{eqnarray*}
(K(1)\Delta^2-\Delta+c\dun^2)\rho&=&\Delta\bigg(c\bigg(\frac{K(1)}{K}-1\bigg)\dun \phi
+\frac{K(1) g}{K}-\rho 
+\frac{K(1)\chi'}{2K}|\nabla \phi|^2\\
&&\hspace{3.5cm}-\frac{K(1) K'}{2K}|\nabla \rho|^2\bigg)+c\dun\text{div}((\chi-1)
\nabla\phi)\\
&:=& \Delta A+c\partial_1\text{div}B.
\end{eqnarray*}
As $\chi(\rho)$ is bounded, $\widetilde{g}'(1)=1$ and 
$K(\chi(1))=K(1)$, it is easy to see
$$
\|A\|_{L^1}+\|B\|_{L^1}\lesssim \widetilde{E}(\rho,\phi)=\widetilde{E}_{\min}(p).
$$
Since the Fourier transform maps continuously $L^1$ to $L^\infty$, we deduce 
\begin{eqnarray*}
\|\rho-1\|_2=2\pi\|\widehat{\rho-1}\|_2 &\leq& 
C(\|A\|_{L^1}+\|B\|_{L^1})
\bigg\|\frac{|\xi|^2+|\xi_1||\xi|}
{K(1)|\xi|^4+|\xi|^2-c|\xi_1|^2}\bigg\|_2
\\
&\leq & C\widetilde{E}_{\min}(p)\bigg\|\frac{|\xi|^2+|\xi_1||\xi|}
{K(1)|\xi|^4+|\xi|^2-c|\xi_1|^2}\bigg\|_2.
\end{eqnarray*}
As $c\leq 1-\alpha p^2<1$ the $L^2$ norm on the right hand side is finite, and an elementary 
explicit computation (see \cite{BGS} claim 2.59) gives 
\begin{equation*}
\|\rho-1\|_2^2\lesssim \frac{\widetilde{E}_{\min}^2}{\sqrt{1-c}}.
\end{equation*}
On the other hand we have from proposition \ref{proppoho} the \eqref{propgtilde}
$\int (\rho-1)^2dx\sim 2\int \widetilde{G}(\rho)dx=cp=\int \chi(\dun\phi)^2dx$, we deduce
\begin{equation*}
p=\int_{\R^2}(\rho-1)\dun \phi\lesssim \int_{\R^2}\widetilde{G}(\rho)+\frac{\chi\dun \phi^2}{2}
dx=cp,
\end{equation*}
so that $c\gtrsim 1$. Next $\int (\rho-1)^2dx\sim cp\geq c\widetilde{E}_{\min}(p)
\gtrsim \widetilde{E}_{\min}(p)$ 
and we can conclude
\begin{equation*}
\sqrt{1-c}\leq C\sqrt{2}\widetilde{E}_{\min}(p)\leq C\sqrt{2}p\Rightarrow c-1\gtrsim -p^2.
\end{equation*}

\end{proof}

\begin{coro}\label{Esharp}
There exists $p_0>0$, such that for  $p\leq p_0$, if there exists a minimiser of momentum $p$,
\begin{equation}\label{eqEsharp}
p-\beta p^3\leq \widetilde{E}_{\min}(p)\leq p-\alpha p^3.
\end{equation}
with the same $\alpha,\beta$ as in proposition \ref{estimvitesse}.
\end{coro}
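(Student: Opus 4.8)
The upper bound $\widetilde{E}_{\min}(p)\leq p-\alpha p^3$ is nothing but Proposition \ref{estimenergie}, so the plan is to concentrate on the matching lower bound $\widetilde{E}_{\min}(p)\geq p-\beta p^3$. First I would fix $p\leq p_0$ and pick a minimizer $(\rho,\phi)$ of momentum $p$ (which exists by hypothesis); by the existence theory of the previous section it is smooth with $(\rho-1,\nabla\phi)\in(\cap_{j\geq 0}H^j)^2$, it solves \eqref{TWmod} for some speed $c=c_p$, and $P(\rho,\phi)=p$, $\widetilde{E}(\rho,\phi)=\widetilde{E}_{\min}(p)$. Then I would feed this solution into the Pohozaev identities of Proposition \ref{proppoho}: specializing \eqref{poho3} to $d=2$ gives $c\,p=2\int_{\R^2}\widetilde{G}(\rho)\,dx$, while \eqref{energie1} gives $c\,p=\int_{\R^2}\chi(\rho)|\nabla\phi|^2\,dx$. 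Substituting both of these into the definition \eqref{modenergy} of $\widetilde{E}$ collapses it to the clean identity
\[
\widetilde{E}_{\min}(p)=c\,p+\frac12\int_{\R^2}K(\chi(\rho))|\nabla\rho|^2\,dx .
\]
Since $K\circ\chi>0$ the remaining integral is nonnegative, hence $\widetilde{E}_{\min}(p)\geq c\,p$, and Proposition \ref{estimvitesse} supplies $c\geq 1-\beta p^2$, so that $\widetilde{E}_{\min}(p)\geq(1-\beta p^2)p=p-\beta p^3$; combined with the upper bound this is exactly \eqref{eqEsharp}, after shrinking $p_0$ if necessary so that both of the cited propositions apply.

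There is essentially no genuine obstacle here: the corollary is a bookkeeping consequence of Propositions \ref{estimenergie}, \ref{proppoho} and \ref{estimvitesse}, the only slightly nontrivial input being the elimination of $\int\widetilde{G}$ and $\int\chi|\nabla\phi|^2$ in favour of $c\,p$ via the two-dimensional Pohozaev relations. The single point that deserves a word of care is that these identities be legitimately applicable to the minimizers we constructed — this needs their smoothness together with the finite-energy decay that justifies the (a priori only formal) integrations by parts in the proof of Proposition \ref{proppoho}, both of which are already available from the existence theory of the previous section and the argument of \cite{Gravejat} invoked there.
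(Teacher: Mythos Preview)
Your proof is correct and follows essentially the same route as the paper: both invoke Proposition~\ref{estimenergie} for the upper bound, and for the lower bound both use the two-dimensional Pohozaev identities \eqref{poho3} and \eqref{energie1} to rewrite $\widetilde{E}(\rho,\phi)=cp+\frac12\int K(\chi(\rho))|\nabla\rho|^2\,dx\geq cp$, then conclude via $c\geq 1-\beta p^2$ from Proposition~\ref{estimvitesse}. Your write-up is simply a bit more explicit about the substitution step.
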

\begin{proof}
The inequality $\widetilde{E}_{\min(p)}\leq p-\alpha p^3$ is proposition \ref{estimenergie}. 
Conversely thanks to propositions \ref{proppoho} and \ref{estimvitesse}
\begin{equation*}
\widetilde{E}(\rho,\phi)\geq \int \frac{\chi|\nabla \phi|^2}{2}+\widetilde{G}(\rho)dx=cp\geq p-\beta p^3.
\end{equation*}
\end{proof}
\begin{rmq}
Corollary \ref{Esharp} is rather natural with the following heuristic : 
consider the formal relation $\displaystyle \delta \widetilde{E}=c\delta P\Rightarrow 
\frac{dE_{\min}}{dp}=c$. If this was true corollary \ref{Esharp} would merely be 
a consequence of the integration in $p$ of the estimates on $c$. 
\end{rmq}

\appendix
\section{Proof of the existence of the profile decomposition}\label{appendiceprof}
This section is devoted to the proof of lemma \ref{profildec}. 
First, we recall that $c_n$ is bounded, up to an extraction we assume $c_n\rightarrow c>0$ (for the 
sign of $c$, see prop \ref{convprofile}). 
\\
According to proposition \ref{convemin} and proposition \ref{estimenergie}, 
$\underline{\lim}(\widetilde{E}_n)_{\min}(p)\leq \widetilde{E}_{\min}(p)\leq 1-\alpha p^2$. 
Therefore for $n$ large enough, $\widetilde{E}_n)_{\min}(p)\leq 1-\alpha p^2/2$, and a
straightforward modification of proposition \ref{estimenergie} implies
$\|\rho_n-1\|_{L^\infty(\mathbb{T}_n^2)}\gtrsim p^2$ This ensures that 
$A_n^\delta=\{|\rho_n-1|\geq \delta\}$ is not  empty at least for $\delta\lesssim p^2$ and $n$ 
large enough. Next for any $n\geq 0$, the set $A_n^\delta$ is compact, thus there exists a finite 
covering $\cup_{i=1}^{l(n)} B(x_n^i,1/3)\supset A_n^\delta$ such that $|\rho(x_n^i)-1|\geq \delta$.
Using Vitali's lemma, there is a
subset $J_n\subset \{1,\cdots,l\}$  such that for $i,j\in J_n$, 
$B(x_n^i,1/3)\cap B(x_n^j,1/3)=\emptyset$ and 
$\displaystyle \bigcup_{i\in J_n} B((x_n^i,1)\supset A_n^\delta$. From lemma \ref{existsmooth}
$\|\rho_n-1\|_{W^{1,\infty}}$ is bounded uniformly in $n$, then 
$$\frac{|J_n|\delta^2}{\|\rho_n-1\|_{W^{1,\infty}}}\lesssim \sum\int_{B(x_n^i,1/3)}(\rho_n-1)^2dx\lesssim 
\widetilde{E_n}(\rho_n,\phi_n),$$
so $|J_n|$ must be bounded uniformly too. Up to an extraction, we can assume that $|J_n|$ 
is a constant $l$. 
\\
There are two key lemmas. The first one is a kind of improved Vitali's lemma, stating that the ball 
can be chosen very far away from each other.
\begin{lemma}\label{coversepar}
Given a collection $\displaystyle\bigcup_{i=1}^l B(x_i,R)\subset \mathbb{T}_n^2$, for any 
$M\geq 2$, there exists a subset 
$J\subset \{1,\cdots,l\}$ and $R\leq R'\leq (2M)^lR$ such that $\sqcup_{j\in J}B(x_j,R')\supset 
\cup_{i=1}^lB(x_i,R)$ and for any $(j,k)\in J^2,\ d(x_j,x_k)\geq MR'$.
\end{lemma}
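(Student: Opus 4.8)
The plan is to prove the lemma by induction on $l$, enlarging the radius by a factor $2M$ each time a center is discarded. For $l=1$ there is nothing to separate: one takes $J=\{1\}$, $R'=R$, the disjointness condition is vacuous, and $R\leq R\leq (2M)^{l}R$ holds trivially.

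For the inductive step I would assume the statement for every collection of at most $l-1$ balls, and given $B(x_1,R),\dots,B(x_l,R)\subset \mathbb{T}_n^2$ distinguish two cases. If $d(x_i,x_j)\geq MR$ for all $i\neq j$, then $J=\{1,\dots,l\}$, $R'=R$ works: the balls are pairwise disjoint because $MR\geq 2R$, and the remaining requirements are immediate. Otherwise some pair, say (after relabelling) $(x_{l-1},x_l)$, satisfies $d(x_{l-1},x_l)<MR$, so by the triangle inequality for the torus distance $B(x_l,R)\subset B(x_{l-1},(M+1)R)\subset B(x_{l-1},2MR)$. Applying the induction hypothesis to the $l-1$ balls $B(x_1,2MR),\dots,B(x_{l-1},2MR)$ (i.e. with $R$ replaced by $2MR$) yields $J\subset\{1,\dots,l-1\}$ and $2MR\leq R'\leq (2M)^{l-1}(2MR)=(2M)^{l}R$ with $\bigsqcup_{j\in J}B(x_j,R')\supset\bigcup_{i=1}^{l-1}B(x_i,2MR)$ and $d(x_j,x_k)\geq MR'$ for $j,k\in J$. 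Since $R\leq 2MR\leq R'$ one has $\bigcup_{i=1}^{l-1}B(x_i,2MR)\supset\bigcup_{i=1}^{l-1}B(x_i,R)$, and combined with $B(x_l,R)\subset B(x_{l-1},2MR)$ this gives $\bigsqcup_{j\in J}B(x_j,R')\supset\bigcup_{i=1}^{l}B(x_i,R)$, closing the induction.

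Equivalently, one can phrase this as a greedy loop: start with $J=\{1,\dots,l\}$ and $r=R$; while some pair of centers in $J$ lies at distance $<Mr$, discard one of the two (its ball is swallowed by the $2Mr$-ball around the other) and replace $r$ by $2Mr$; the loop terminates after at most $l-1$ steps, so the final $R'$ satisfies $R\leq R'\leq (2M)^{l-1}R\leq(2M)^{l}R$, the covering property is preserved at each step, and disjointness at termination follows from $M\geq 2$. I do not expect a real obstacle here: the only points needing (minor) care are that the argument uses nothing beyond the triangle inequality, so it is insensitive to the global geometry of $\mathbb{T}_n^2$ (in particular, if an enlarged ball were all of $\mathbb{T}_n^2$ the separation condition would force $|J|=1$), and that the radius grows geometrically, which is exactly what produces the bound $(2M)^{l}R$.
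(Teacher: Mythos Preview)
Your proof is correct. The paper itself does not give a proof of this lemma but simply refers to \cite{BGS}, lemma~4.12; your inductive (equivalently, greedy-merging) argument is the standard one and is essentially what is done there.
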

\noindent For the proof, we refer to \cite{BGS} lemma $4.12$.\\
The second lemma looks a lot like proposition $4.2$ from \cite{BGS}. We include a proof since 
there  is a few non trivial differences. Let us first fix some notations :
for fixed $n$, $R\geq 1$, $M>>1$, we apply lemma \ref{coversepar} to $\cup B(x_n^i,R)$. Up to 
reindexing, there exists $l_n\leq l$, $(x_n^i)_{1\leq i\leq l_n}$
$\sqcup_{i=1}^{l_n} B(x_n^i,R'_n)\supset \cup_{i=1}^l B(x_n^i,R)\supset A_n^\delta$, 
$d(x_n^i,x_n^j)\geq R'M$. 
\begin{lemma}\label{lemmespread}
If $\cup_{i=1}^{l_n} B(x_n^i,R')\supset A_n^\delta$ is as in lemma \ref{coversepar}, 
and $k\in \N^*$ such that $2^{k}<M/2$, there exists $1\leq m\leq k$, $C$ 
an absolute constant such that setting
$$
\mathcal{S}_n^k:=(\cup_{i=1}^{l_n} B(x_n^i,2^{m}R'))^c,
$$
then 
\begin{equation}\label{estimspread}
\bigg|\int_{\mathcal{S}_n^k} c_np(\rho_n,\phi_n)-
\widetilde{e}(\rho_n,\phi_n)dx\bigg|\leq C\bigg(\delta \int_{\mathcal{S}_n^k} 
\widetilde{e}(\rho_n,\phi_n)dx+\frac{\widetilde{E}_n(\rho_n,\phi_n)}{k}\bigg).
\end{equation}
\end{lemma}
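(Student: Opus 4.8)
The plan is to pick a good dyadic scale by a pigeonhole argument, replace the sharp indicator $\mathbf 1_{\mathcal S_n^k}$ by a smooth cutoff $\eta$, and then read off \eqref{estimspread} from two localised ``energy--momentum'' identities obtained by testing the Euler--Lagrange system \eqref{elltore} against $\eta^2(\rho_n-1)$ and against $\eta^2\phi_n$, while dumping every error onto a thin annulus. Write $U_n^j=\bigcup_{i=1}^{l_n}B(x_n^i,2^jR')$ and $A_n^j=U_n^j\setminus U_n^{j-1}$ for $1\le j\le k$. The hypothesis $2^k<M/2$ forces $2\cdot 2^jR'<MR'\le d(x_n^i,x_n^j)$ for $j\le k$ and $i\ne j$, so the balls $B(x_n^i,2^jR')$ are pairwise disjoint; hence the annuli $A_n^1,\dots,A_n^k$ are pairwise disjoint, $\sum_{j=1}^k\int_{A_n^j}\widetilde e(\rho_n,\phi_n)\,dx\lesssim\widetilde E_n(\rho_n,\phi_n)$, and I may fix $m\in\{1,\dots,k\}$ with $\int_{A_n^m}\widetilde e\lesssim\widetilde E_n/k$. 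I then take $\eta\in C^\infty(\mathbb T_n^2)$ equal to $1$ on $\mathcal S_n^k=(U_n^m)^c$, equal to $0$ on $U_n^{m-1}$, radial around each $x_n^i$ in the transition region, with $\|\nabla\eta\|_\infty\lesssim(2^mR')^{-1}$ and $\text{supp}\,\nabla\eta\subset A_n^m$. Since $|\widetilde e-c_np|\lesssim\widetilde e$ pointwise and $0\le\eta^2-\mathbf 1_{\mathcal S_n^k}\le\mathbf 1_{A_n^m}$, it is enough to estimate $\int_{\mathbb T_n^2}\eta^2(\widetilde e-c_np)\,dx$ by the right-hand side of \eqref{estimspread}. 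Finally, since $A_n^\delta\subset U_n^0$, one has $\text{supp}\,\eta\subset(U_n^0)^c\subset\{|\rho_n-1|<\delta\}$, so on $\text{supp}\,\eta$ (for $\delta$ small) $\chi(\rho_n)=\rho_n$, $\widetilde g(\rho_n)=g(\rho_n)$, and each nonlinearity agrees with its quadratic Taylor part up to an $O(\delta)\,\widetilde e$ error.

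Testing the second equation of \eqref{elltore} with $\eta^2(\rho_n-1)$ and integrating by parts, the cubic terms produce $O(\delta\int\eta^2\widetilde e)$ and the $\nabla\eta$-terms produce $O(\widetilde E_n/k)$ (Cauchy--Schwarz on $A_n^m$, using $2^mR'\gtrsim1$), so that
\begin{equation*}
\int_{\mathbb T_n^2}\eta^2\big(K(\rho_n)|\nabla\rho_n|^2+(\rho_n-1)^2\big)\,dx
=c_n\int_{\mathbb T_n^2}\eta^2(\rho_n-1)\partial_1\phi_n\,dx+O\!\Big(\delta\!\int\eta^2\widetilde e\Big)+O\!\Big(\tfrac{\widetilde E_n}{k}\Big).
\end{equation*}
For the companion identity, set $W_n:=\chi(\rho_n)\nabla\phi_n-c_n(\rho_n-1)e_1$ ($e_1$ the first coordinate vector), so that the first equation of \eqref{elltore} is exactly $\text{div}\,W_n=0$ on $\mathbb T_n^2$; then $\int\eta^2\chi(\rho_n)|\nabla\phi_n|^2=c_n\int\eta^2(\rho_n-1)\partial_1\phi_n+\int\eta^2\nabla\phi_n\cdot W_n$, and, $W_n$ being divergence free,
\begin{equation*}
\int_{\mathbb T_n^2}\eta^2\nabla\phi_n\cdot W_n\,dx=\int_{\mathbb T_n^2}\eta^2\,\text{div}(\phi_nW_n)\,dx=-\int_{\mathbb T_n^2}\phi_n\,\nabla(\eta^2)\cdot W_n\,dx.
\end{equation*}
Adding the two identities and recalling $2\widetilde e=\chi|\nabla\phi_n|^2+K|\nabla\rho_n|^2+(\rho_n-1)^2$ on $\text{supp}\,\eta$, one gets $\int\eta^2(\widetilde e-c_np)=\tfrac12\int\eta^2\nabla\phi_n\cdot W_n+O(\delta\int\eta^2\widetilde e)+O(\widetilde E_n/k)$; since $\int\eta^2\widetilde e\le\int_{\mathcal S_n^k}\widetilde e+\int_{A_n^m}\widetilde e$, the lemma will follow provided $\big|\int\phi_n\,\nabla(\eta^2)\cdot W_n\big|\lesssim\widetilde E_n/k$.

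This last bound is the genuinely delicate step and the main difference with the Gross--Pitaevskii analysis of \cite{BGS}: there the order parameter $\psi$ is uniformly bounded, whereas here the phase $\phi_n$ carries no uniform bound on the expanding torus, only a controlled gradient. I would write $\nabla(\eta^2)=\sum_i\nabla(\eta_i^2)$, where $\eta_i=\theta(|\cdot-x_n^i|/R')$ is the building block of $\eta$ around $x_n^i$ (so $\nabla(\eta_i^2)$ is supported in $A_n^{m,i}:=B(x_n^i,2^mR')\setminus B(x_n^i,2^{m-1}R')$), and on each $A_n^{m,i}$ split $\phi_n=(\phi_n-\overline{\phi_n^i})+\overline{\phi_n^i}$, $\overline{\phi_n^i}$ being the mean of $\phi_n$ there. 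The constant part disappears, because $\overline{\phi_n^i}\int_{A_n^{m,i}}\nabla(\eta_i^2)\cdot W_n=\overline{\phi_n^i}\int_{\mathbb T_n^2}\text{div}(\eta_i^2W_n)\,dx=0$; this cancellation of the uncontrolled constant against the flux of a divergence-free field is the one new ingredient compared with \cite{BGS}. For the fluctuation part, the Poincaré--Wirtinger inequality on the annulus $A_n^{m,i}$---whose constant is $\lesssim2^mR'$, precisely compensating $\|\nabla(\eta_i^2)\|_\infty\lesssim(2^mR')^{-1}$---combined with the pointwise bound $|W_n|\lesssim|\nabla\phi_n|+|\rho_n-1|$ gives $\big|\int_{A_n^{m,i}}(\phi_n-\overline{\phi_n^i})\nabla(\eta_i^2)\cdot W_n\big|\lesssim\int_{A_n^{m,i}}\widetilde e$, and summing over $i$ yields $\lesssim\int_{A_n^m}\widetilde e\lesssim\widetilde E_n/k$. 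With this in hand, the remainder is the routine accounting of the $O(\delta)$ nonlinear errors and the $O(1/k)$ annular errors already isolated above, together with the elementary facts that the annular Poincaré constant is uniform in $n,k$ and that $\int_{\mathbb T_n^2}\widetilde e\lesssim\widetilde E_n$ (immediate from $\widetilde G\gtrsim(\rho-1)^2$ and scaling).
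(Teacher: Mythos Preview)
Your proof is correct and follows essentially the same strategy as the paper: pigeonhole to locate a good dyadic annulus, then test the two Euler--Lagrange equations against cutoff versions of $\rho_n-1$ and $\phi_n$, control the annular errors with Poincar\'e--Wirtinger, and use $|\rho_n-1|<\delta$ on the exterior to absorb nonlinear remainders. The only difference is packaging: the paper builds modified test functions $\check\phi_n,\check\rho_n$ that equal the annular mean (resp.\ $1$) on the inner balls, so the uncontrolled constant in $\phi_n$ never appears, whereas you keep a single cutoff $\eta^2$ and kill that constant explicitly via the divergence-free structure $\mathrm{div}\,W_n=0$; these are two ways of writing the same integration by parts, so your remark that this cancellation is a ``new ingredient compared with \cite{BGS}'' overstates the novelty.
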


\begin{proof}
We first remark that since the balls $B(x_n^i,2^kR')$ are disjoint, 
\begin{equation*}
\int_{\cup_{i=1}^{l_n} B(x_n^i,2^kR')\setminus B(x_n^i,R')}\widetilde{e}\,dx=\sum_{p=1}^{k}
\int_{\cup_{i=1}^{l_n} B(x_n^i,2^{p}R')\setminus B(x_n^i,2^{p-1}R')}\widetilde{e}\,dx
\end{equation*}
In particular, there exists $1\leq m\leq k$ such that 
\begin{equation}\label{estimcouronne}
\int_{\cup_{i=1}^{l_n} B(x_n^i,2^{m}R')\setminus B(x_n^i,2^{m-1}R')}\widetilde{e}\,dx
\leq \frac{\int_{\cup_{i=1}^{l_n} B(x_n^i,2^{k}R')\setminus B(x_n^i,R')}\widetilde{e}\,dx}{k}
\leq \frac{\widetilde{E_n}(\rho_n,\phi_n)}{k}.
\end{equation}
Now let $\psi\in C_c^\infty(\R^+)$ such that $\psi|_{[0,1]}=1$, $\text{supp}(\psi)\subset [0,2]$, 
we define $\check{\phi}_n$ by 
\begin{eqnarray*}
\check{\phi}_n&=&\phi_n,\ x\in (\cup B(x_n^i,2^mR'))^c,\\
\check{\phi}_n&=&\psi\bigg(\frac{|x-x_n^i|}{2^{m-1}R'}\bigg)
\fint_{B(x_n^i,2^mR') \setminus B(x_n^i,2^{m-1}R')} \phi_ndx+(1-\psi) \phi_n,\ x\in B(x_n^i,2^mR').
\end{eqnarray*}
We recall the Poincar\'e-Wirtinger inequality
\begin{equation*}
\int_{B(0,2)\setminus B(0,1)}|f-\fint f|^2dx\leq C \|\nabla f\|_{L^2(B(0,2)\setminus B(0,1))}^2,
\end{equation*}
so that from a scaling argument 
\begin{equation}\label{PW}
\int_{B(x_n^i,2^mR') \setminus B(x_n^i,2^{m-1}R')}|\nabla \check{\phi_n}|^2dx\lesssim  \|\nabla \phi_n\|_
{L^2(B(x_n^i,2^mR') \setminus B(x_n^i,2^{m-1}R')}^2.
\end{equation}
If we multiply the first equation of \eqref{elltore} by $\check{\phi}_n$ and integrate over 
$\mathbb{T}_n^2$ we obtain 
\begin{eqnarray*}
\int_{\mathcal{S}_n^k}c_np(\rho_n,\phi_n)dx-\chi(\rho_n)|\nabla \phi_n|^2dx+
\sum_{i=1}^{l(n)}\int_{B(x_n^i,2^mR')\setminus B(x_n^i,2^{m-1}R')}c_n(\rho_n-1)\dun \check{\phi}_n\\
\hspace{1cm}-\chi(\rho_n)\nabla \phi_n\nabla\check{\phi}_ndx=0.
\end{eqnarray*}
Using Cauchy-Schwarz's inequality, \eqref{estimcouronne} and \eqref{PW} we can bound the second 
term
\begin{eqnarray*}
\big|\sum_{i=1}^{l(n)}\int_{B(x_n^i,2^mR')\setminus B(x_n^i,2^{m-1}R')}c_n(\rho_n-1)\dun 
\check{\phi}_n-\chi \nabla \phi_n\nabla\check{\phi}_ndx\big|
\lesssim \int_{\cup B(x_n^i,2^mR')\setminus B(x_n^i,2^{m-1}R')}\widetilde{e}(\rho_n,\phi_n)dx
\end{eqnarray*}
\begin{equation*}\hspace{6cm}
\leq \frac{\widetilde{E}_n(\rho_n,\phi_n)}{k}.
\end{equation*}
We have obtained 
\begin{equation}\label{idmasse}
\bigg|\int_{\mathcal{S}_n^k}c_np(\rho_n,\phi_n)-\rho_n|\nabla\phi_n|^2dx\bigg|
\leq C\frac{\widetilde{E}_n(\rho_n,\phi_n)}{k}.
\end{equation}
We turn to symmetric computations on the second equation of \eqref{elltore}. We set 
\begin{equation*}
\check{\rho}_n=
\left\{
\begin{array}{ll}
\displaystyle\rho_n,\ x\in (\cup B(x_n^i,2^mR'))^c,\\
\displaystyle\check{\rho}_n=\psi\bigg(\frac{|x-x_n^i|}{2^{m-1}R'}\bigg)
+(1-\psi) \rho_n,\ x\in B(x_n^i,2^{m}R').
\end{array}
\right.
\end{equation*}
In this case, since $(\rho_n-1)^2\lesssim \widetilde{e}_n(\rho_n,\phi_n)$ and 
$|\check{\rho}_n-1|\leq |\rho_n-1|$ we will not need the 
Poincar\'e-Wirtinger inequality. 
As in the previous section, we denote $K$ for $(K\circ \chi) (\rho_n)$, $K'=dK\circ \chi/d\rho$.
Multiplying the second 
equation of \eqref{elltore} by $\check{\rho}_n-1$ and integrating on $\mathbb{T}_n^2$ gives 
\begin{eqnarray*}
\int_{\mathbb{T}_n^2}-c_n(\check{\rho}_n-1)\dun \phi_n+\frac{(\check{\rho}_n-1)
|\nabla \phi_n|^2}{2} +K\nabla\rho_n \nabla \check{\rho}_n
+\frac{1}{2}K'|\nabla\rho_n|^2(\check{\rho}_n-1)\\
+\widetilde{g}(\rho_n)(\check{\rho}_n-1)dx=0
\end{eqnarray*}
We point out that $\check{\rho}_n-1=0$ on $\cup B(x_n^i,2^{m-1}R')$, thus 
\begin{eqnarray}
\nonumber
\int_{\cup B(x_n^i,2^mR')\setminus B(x_n^i,2^{m-1}R')}-c_n(\check{\rho}_n-1)\dun \phi_n
+\frac{(\check{\rho}_n-1)|\nabla \phi_n|^2}{2} +K\nabla\rho_n \nabla \check{\rho}_n
\\
\nonumber +\frac{1}{2}K'|\nabla\rho_n|^2(\check{\rho}_n-1)
+\widetilde{g}(\rho_n)(\check{\rho}_n-1)dx\\
\nonumber= -\int_{\mathcal{S}_n^k}-c_np(\rho_n,\phi_n)
 +K|\nabla\rho_n|^2+\widetilde{g}(\rho_n)(\rho_n-1)
\\
\label{step1}
\hspace{3mm}+\frac{(\rho_n-1)|\nabla \phi_n|^2}{2}+\frac{1}{2}K'|\nabla\rho_n|^2(\rho_n-1)dx
\end{eqnarray}
To estimate the left hand side, we observe that 
on $(\cup_i B(x_n^i,R'))^c$, $|\check{\rho_n}-1|\leq \min(|\rho_n-1|,\delta)$, therefore
\begin{eqnarray*}
\int_{\cup_i B(x_n^i,2^mR')\setminus B(x_n^i,2^{m-1}R')}-c_n(\check{\rho}_n-1)\dun \phi_n
+\frac{(\check{\rho}_n-1)|\nabla \phi_n|^2}{2} &&\\
\hspace{2cm}+\frac{1}{2}K'|\nabla\rho_n|^2(\check{\rho}_n-1)+g(\rho_n)(\check{\rho}_n-1)dx
&\lesssim& \frac{\widetilde{E_n}(\rho_n,\phi_n)}{k},
\end{eqnarray*}
Moreover $|\nabla \check{\rho_n}|\lesssim |\nabla\rho_n|+|\rho_n-1|$, therefore
$$\displaystyle \int_{B(x_n^i,2^pR')\setminus B(x_n^i,2^{p-1}R')}|\nabla \check{\rho}_n|^2dx\lesssim 
\int_{B(x_n^i,2^pR')\setminus B(x_n^i,2^{p-1}R')}\widetilde{e}(\rho_n,\phi_n)dx,$$
so that the left hand side in \eqref{step1} is bounded by $E_n/k$.
This estimate, combined with $|\rho_n-1||\nabla \rho_n|^2\leq \delta |\nabla\rho_n|^2$ on 
$\mathcal{S}_n^k$ implies
\begin{eqnarray*}
\bigg|\int_{\mathcal{S}_n^k}-c_n(\rho_n-1)\dun \phi_n
 +K(\rho_n)|\nabla\rho_n|^2+g(\rho_n)(\rho_n-1)\,dx\bigg|\hspace{3cm}\\
 \leq C\bigg(\delta 
 \int_{\mathcal{S}_n^k}\widetilde{e}_ndx
 +\int_{\cup B(x_n^i,2^pR')\setminus B(x_n^i,2^{p-1}R')}\widetilde{e}_ndx\bigg).
\end{eqnarray*}
To conclude, we remark that near $\rho=1$, 
$\widetilde{g}(\rho)\sim \rho-1,\ \widetilde{G}(\rho)\sim (\rho-1)^2/2$,
so that $|\widetilde{g}(\rho_n)(\rho_n-1)-2\widetilde{G}(\rho_n)|\lesssim\delta 
\widetilde{G}(\rho_n)\leq \delta\widetilde{e}$. As a consequence
\begin{equation}\label{idmoment}
\bigg|\int_{\mathcal{S}_n^k}-c_np(\rho_n,\phi_n)+K|\nabla\rho_n|^2+2\widetilde{G}(\rho_n)\,dx\bigg|
 \leq C\bigg(\delta  \int_{\mathcal{S}_n^k}\widetilde{e}_ndx+
\frac{\widetilde{E}_n(\rho_n,\phi_n)}{k}\bigg)
\end{equation}
Putting together \eqref{idmasse} and \eqref{idmoment}, we find the expected result
\begin{equation*}
\bigg|\int_{\mathcal{S}_n^k}-c_np(\rho_n,\phi_n)
 +\frac{K(\rho_n)|\nabla\rho_n|^2+\chi(\rho_n)|\nabla\phi_n|^2}{2}+\widetilde{G}(\rho_n)\,dx\bigg|
 \leq C\bigg(\delta 
 \int_{\mathcal{S}_n^k}\widetilde{e}_ndx+\frac{\widetilde{E}_n}{k}\bigg).
\end{equation*}
\end{proof}

\noindent 
Now we combine these two lemmas to construct the sequence $R_n^k$ through a diagonal extraction.
\subsection{The  case \texorpdfstring{$c\geq 1$}{c}}
\paragraph{Construction of $R_n^1$} We recall that for any $n\geq 0$, $A_n^\delta\subset 
\cup_{i=1}^l B(x_n^i,1)$. We apply lemma 
\ref{coversepar} with $M=10$, this gives for any $n$ a subset $J_n^1\subset \{1,\cdots,l\}$ and 
$1\leq R_n^1\leq (20)^l$ such that 
$$\sqcup_{j\in J_n^1} B(x_n^j,R_n^1)\supset \cup_{i=1}^l B(x_n^i,1), \text{ and for any } 
(i,j)\in J_n^1, \ d(x_n^j,x_n^i)\geq 10 R_n^1.$$
We apply lemma \ref{lemmespread} with $k=1$\footnote{In this case obviously $p=1$, but this 
will not be the case in the rest of the induction argument.}, then \eqref{estimspread} is true on 
$\mathcal{S}_n^1=(\sqcup_{J_n^1} B(x_n^j,2R_n^1)\big)^c$. Since $(2R_n^1)_n$ and $|J_n^1|$ are 
bounded, there is an extraction $\psi_1(n)$ such that $2R_{\psi_1(n)}^1$ converges to some 
$\mathcal{R}^1\geq 2$ and $J_{\psi_1(n)}^1=J^1$ does not depend of $n$.
\paragraph{Construction of $R_n^2$} We apply once more lemma \ref{coversepar} to
$\cup_{i=1}^l B(x_{\psi_1(n)}^i,2)$ with $M=3\cdot 10$. For any $n$ there is a subset 
$J_{\psi_1(n)}^2\subset \{1,\cdots,l\}$, $2\leq R_{\psi_1(n)}^2\leq 2(60)^l$ such that 
\begin{eqnarray*}
\sqcup_{i\in J_{\psi_1(n)}^2} B(x_{\psi_1(n)}^i,R_{\psi_1(n)}^2)\supset 
\cup_{i=1}^l B(x_{\psi_1(n)}^i,1), \\
\text{ and for any } (i,j)\in J_{\psi_1(n)} ^1, 
\ d(x_{\psi_1(n)}^j,x_{\psi_1(n)}^i)\geq 30 R_{\psi_1(n)}^2.
\end{eqnarray*}
From lemma \ref{lemmespread} with $k=2$, for any $n$ there exists 
$1\leq m_{\psi_1(n)}^2\leq 2$ such that 
\eqref{estimspread} is true on $\mathcal{S}_{\psi_1(n)}^2=\big(\sqcup_{J_{\psi_1(n)}^2}
B(x_{\psi_1(n)}^j,2^{m_{\psi_1(n)}^2}R_{\psi_1(n)}^2)\big)^c$. Since $(2^{m_{\psi_1(n)}^2}
R_{\psi_1(n)}^2)_n$ and $|J_{\psi_1(n)}^2|$ are bounded, 
there is a sub-extraction $\psi_2(n)$ such that  $2^{m_{\psi_2(n)}^2}
R_{\psi_2(n)}^2\longrightarrow_n \mathcal{R}^2\geq 4$ and $J_{\psi_2(n)}^2=J^2$. \\
The generic argument at step $k$ to construct of $R_n^k$ is the following :
\paragraph{Construction of $R_n^k$} At step $k$, we have an extraction $\psi_{k-1}(n)$, we apply 
lemma \ref{lemmespread} to $\cup_{i=1}^l B(x_{\psi_{k-1}(n)}^i,2 ^k)$ with $M=10\cdot 3^{k-1}$, 
which gives again $2^k\leq R_{\psi_{k-1}(n)}^k\leq 2^k(20\cdot 3^{k-1})^l$, 
$J_{\psi_{k-1}(n)}^k\subset \{1,\cdots,l\}$ as before, then lemma \ref{lemmespread} 
provides $1\leq m_{\psi_{k-1}(n)}^k\leq 2^k$ such that \eqref{estimspread} is true on 
$\mathcal{S}_{\psi_{k-1}(n)}^k=\big(\sqcup_{J_{\psi_{k-1}(n)}^k}B(x_{\psi_{k-1}(n)}^j,
2^{m_{\psi_{k-1}(n)}^k}
R_{\psi_{k-1}(n)}^k)\big)^c$. The union is  disjoint since
\begin{equation*}
 d(x_{\psi_{k-1}(n)}^j,x_{\psi_{k-1}(n)}^i)\geq 10\cdot 3^{k-1}R_{\psi_{k-1}(n)}
 \geq 5\cdot(3/2)^{k-1} (2^{m_{\psi_{k-1}(n)}^k}R_{\psi_k(n)}^k).
\end{equation*} 
Since $(2^{m_{\psi_{k-1}(n)}^k}R_{\psi_{k-1}(n)}^k)_n$, $(|J_{\psi_{k-1}(n)}^k|)_n$ 
are bounded in $n$, there is an extraction 
$\psi_k$ such that 
$$
2^{m_{\psi_{k}(n)}^k}R_{\psi_{k}(n)}^k\longrightarrow_n\mathcal{R}^k\geq 2^{k+1},\ 
J_{\psi_k(n)}^k=J^k.
$$ 

\paragraph{Conclusion} Since $2\leq |J^k|\leq l$, there exists an extraction $\sigma$ such that 
$J^{\sigma(k)}=J$ does not depend on $k$ and $|J|\geq 2$. We consider the diagonal extraction 
$\psi_{\sigma(n)}(\sigma(n))=\Psi(n)$ and set for $n\geq k$, 
$\mathcal{R}_n^k:=2^{m_{\Psi(n)}^{\sigma(k)}}R_{\psi(n)}^{\sigma(k)}$, 
$(X_n^j)_{j\in J}:=(x_{\Psi(n)}^j)_{j\in J}$. By construction, 
$$
d(X_n^i,X_n^j)\geq 5\cdot(3/2)^{k-1}\mathcal{R}_n^k,\ 
\mathcal{R}_n^k\longrightarrow_n\mathcal{R}^k\geq 2^{\sigma(k)}
\longrightarrow_k+\infty,
$$
and for any $n\geq k$, according to lemma \ref{lemmespread}
\begin{equation*}
\bigg|\int_{(\sqcup_J B(X_n^j,\mathcal{R}_n^k))^c}\big(c_{\Psi(n)}p-\widetilde{e}\big)
(\rho_{\Psi(n)},\phi_{\Psi(n)})dx\bigg|\leq 
C\bigg(\delta \int_{(\sqcup_J B(X_n^j,\mathcal{R}_n^k))^c}\widetilde{e}dx
+\frac{\widetilde{E_{\Psi(n)}}}{\sigma(k)}
\bigg).
\end{equation*}
\subsection{The case \texorpdfstring{$c<1$}{c}}
In this case, for an arbitrary subset $\Omega$ we use the simple estimate :
\begin{equation*}
\forall\,x\in \Omega,\ |p|\leq \frac{(\rho-1)^2+\chi|\dun\phi|^2}{2\inf_{\Omega}\sqrt{\chi}}  
 \end{equation*}
Combining this with $G(\rho)=(1-\rho)^2/2+O((1-\rho)^3)$, this implies for $\delta$ small 
\begin{equation*}
\exists\,C>0:\forall\,x\in A_n^\delta,\ |p(\rho_n(x),\phi_n(x))|\leq 
\frac{\widetilde{e}(\rho_n(x),\phi_n(x))}{1-C\delta}.
\end{equation*}
For any set $\mathcal{S}\subset \mathcal{A}_n^\delta$, provided 
$n$ is large enough, $\delta$ small enough, we get
\begin{equation*}
\bigg|\int_{\mathcal{S}}\widetilde{e}-c_npdx\bigg|\geq \bigg(1-\frac{c_n}{1-C\delta}\bigg)
\int_{\mathcal{S}}\widetilde{e}dx\geq \frac{1-c}{2}\int_{\mathcal{S}}\widetilde{e}dx.
\end{equation*}
Now lemma \ref{lemmespread} with $k\geq 1$, $R=1$, $M=10\cdot 3^{k-1}$
provides $\mathcal{S}_n^k\subset \mathcal{A}_n^\delta$ on which equation \eqref{estimspread} combined 
with the inequality above implies for $\delta$ small enough
\begin{equation*}
\frac{1-c}{2}\int_{\mathcal{S}_n^k}\widetilde{e}dx\leq C\bigg(\frac{\widetilde{E}_n}{k}
+\delta \int_{\mathcal{S}_n^k}\widetilde{e}dx\bigg)\Rightarrow 
\int_{\mathcal{S}_n^k}\widetilde{e}\lesssim \frac{\widetilde{E}_n}{k}.
\end{equation*}
Therefore, arguing as for $c\geq 1$ we obtain extractions $\Psi,\sigma$ such that for $n\geq k$
\begin{equation*}
d(X_n^i,X_n^j)\geq 5\cdot(3/2)^{k-1}\mathcal{R}_n^k,\
\int_{\sqcup B(X_n^j,\mathcal{R}_n^k)}\widetilde{e}(\rho_{\Psi(n)},\phi_{\Psi(n)})dx,
\leq C\frac{\widetilde{E_n}}{\sigma(k)}
\end{equation*}
with $\lim_n \mathcal{R}_n^k=\mathcal{R}^k\geq 2^{\sigma(k)}$.

\section{Remarks on the one dimensional case}\label{unde}
The existence and stability of solitary waves for nonlinear Schr\"odinger type equations 
\begin{equation*}
i\partial_t\psi+\partial_x^2\psi=g(|\psi|^2)\psi,\ \text{with }g(\rho_0)=0,
\end{equation*}
is now quite well understood. Existence follows from basic 
ODE technics since the corresponding equation is integrable, stability is a more delicate 
issue, but can nevertheless be tackled in several ways. The first approach is to consider 
the minimization problem $\inf \{E_{NLS}(\psi),\ P_{NLS}(\psi)=p\}$. Due to better 
Sobolev embeddings in dimension $1$ it can be directly solved, the stability of minimizers 
then follows by the classical Cazenave-Lions \cite{CazLions} argument. This program has been 
carried at least in the Gross-Pitaevskii case $g(\rho)=\rho-1$ in \cite{BGS3}. More recently 
D. Chiron studied extensively in \cite{Chiron2} the stability and instability of traveling waves 
for very general $g(\rho)$. Among the variety of technics developed was an approach 
\`a la Grillakis-Shatah-Strauss which is very efficient in our case too.
In this section, we want to underline that traveling waves of \eqref{EK} and NLS share remarkable 
common features : 
\begin{enumerate}
\item their speed is bounded by the sound speed $c_s=\sqrt{\rho_0g'(\rho_0)}$ for \eqref{EK}, 
$\sqrt{2\rho_0g'(\rho_0)}$ for NLS,
\item if there exists a traveling wave of speed $c_0<c_s$, there exists a local branch of traveling waves 
parametrized by their speed as $\psi_c$ or $(\rho_c,\phi_c)$,
\item the stability criterion is $dP_{NLS}(\psi_c)/dc<0$, resp. $dP(\rho_c,\phi_c)<0$.
\end{enumerate}
The existence and conditional stability of solitary waves for \eqref{EK} in dimension one was 
already obtained in \cite{BDD2} with a stability criterion that can be easily proved as 
equivalent to $dP/dc<0$ (see remark \ref{stabaltern}). Nonlinear instability was left open, 
but using methods developed for Schr\"odinger type equations in \cite{Lin}, we will prove that 
$dP/dc>0$ implies nonlinear instability. This is the only new result of this section, which is 
structured as follows : we rewrite the equations in a more convenient form, and show the existence 
of traveling waves that can be parametrized by their speed (proposition \ref{existdim1}). Next we 
recall the stability criterion of \cite{GSS} and show that its assumptions are satisfied. Finally, 
we prove in theorem \ref{nonlininsta} that the failure of the stability criterion implies 
nonlinear instability. \vspace{1mm}\\ 
Let us now turn to the equations under study. We take $\rho_\infty>0$ 
and assume $g(\rho_\infty)=0$, $g'(\rho_\infty)>0$, we will study traveling waves with 
$\lim_{\pm\infty} \rho=\rho_\infty$. As in the rest of the article, we assume that $g$ and $K$ are 
smooth on $]0,+\infty[$ in order to avoid  technical issues. $G$ is the primitive of $g$ that cancels 
at $\rho_\infty$.
In order to avoid the peculiar space $\dot{H}^1$, we will use a slight modification of the hamiltonian
and momentum. Instead of
$$E(\rho,\phi)=\int_{\R} \frac{\rho|\nabla \phi|^2+K|\nabla \rho^2|}{2}+G(\rho)dx,$$ 
defined for $(\rho,\phi)\in H^1\times \dot{H}^1$, we consider 
\begin{equation*}
E(\rho,u)=\int_{\R} \frac{\rho|u|^2+K|\nabla \rho^2|}{2}+G(\rho)dx,\ 
P(\rho,u)=\int_{\R} (\rho-\rho_\infty)udx.
\end{equation*}
defined for $(\rho,u)\in (\rho_\infty+H^1)\times L^2$ with $\rho>0$. For the variables $(\rho,u)$, 
the Euler-Korteweg system has the following hamiltonian structure 
\begin{equation}\label{alterham}
\partial_t\begin{pmatrix}
\rho\\ u\end{pmatrix}
=\begin{pmatrix}
0 & -\partial_x\\
  -\partial_x & 0
 \end{pmatrix}
\begin{pmatrix}
 \frac{\delta E}{\delta \rho}\\
 \frac{\delta E}{\delta u}
\end{pmatrix}
=J\delta E.
\end{equation}
Traveling waves of speed $c$ can be seen as critical points of $E-cP$ : 
if $\rho(x-ct)$, $u(x-ct)$ solves \eqref{alterham} with 
$\lim_{\pm\infty}\rho= \rho_\infty,\ \lim_{\pm\infty}u=0$, then 
\begin{equation*}
\left\{
\begin{array}{ll}
-c(\rho-\rho_\infty)+\rho u=0\\
-cu+u^2/2+g(\rho)=K\rho''+\frac{1}{2}K'(\rho')^2
\end{array}
\right.
\Leftrightarrow  
c\begin{pmatrix}
\frac{\delta P}{\delta \rho}\\
\frac{\delta P}{\delta u}
\end{pmatrix}
=
\begin{pmatrix}
 \frac{\delta E}{\delta \rho}\\
 \frac{\delta E}{\delta u}
\end{pmatrix}
\end{equation*}
Obviously if $(\rho,u)$ is a traveling wave of speed $c$, $(\rho,-u)$ is a traveling wave of speed $-c$, 
therefore we focus on the case $c>0$ (we choose not to consider the degenerate case $c=0$).
This ODE system can be elementarily integrated: from the first equation, $u=c(\rho-\rho_\infty)/\rho$, 
injecting this in the second equation, and
multiplying it by $\rho'$, we obtain after integration
\begin{equation}\label{eqdiffrho}
\frac{-c^2}{2\rho}(\rho-\rho_\infty)^2+G(\rho)=\frac{1}{2}K(\rho')^2,
\end{equation}
Letting $x\rightarrow \infty$, we find 
\begin{equation*}
0\leq \frac{1}{2}K(\rho')^2=\frac{(\rho-\rho_\infty)^2}{2\rho_\infty}(
\rho_\infty g'(\rho_\infty)-c^2)+O(\rho-\rho_\infty)^3.
\end{equation*}
We deduce the so-called subsonic condition
$$|c|\leq \sqrt{\rho_\infty g'(\rho_\infty)}:=c_s.$$ 
Conversely, if $0<c<\sqrt{\rho_\infty g'(\rho_\infty)}$ 
consider the application 
$$F(\rho)=\frac{-c^2}{2\rho}(\rho-\rho_\infty)^2+G(\rho).$$ 
On a neighbourhood of $\rho_\infty$, $F>0$, and since $\lim_{0^+}F_c(\rho)=-\infty$ we can define
$\rho_m=\sup\{\rho<\rho_\infty: F(\rho)=0\}>0$. The set $\{(\rho,\rho'):\ \rho'=\pm \sqrt{F(\rho)},\ 
\rho\in [\rho_m,\rho_\infty]\}$ forms a homoclinic orbit of the differential equation 
\eqref{eqdiffrho} under the (generically true) condition $F'(\rho_m)> 0$. If 
$F'(\rho_c)=0$ the set corresponds to two heteroclinic profiles (of infinite energy). Symmetrically, 
if $\rho_M=\inf\{\rho>\rho_\infty:\ F(\rho)=0\}$ is finite, the set $\{(\rho,\rho'):\ \rho'=\pm \sqrt{F(\rho)},\ 
\rho\in [\rho_\infty,\rho_M]\}$ forms a homoclinic orbit if $F'(\rho_M)<0$. We also point out the identity 
\begin{equation}\label{nonzeroP}
P(\rho_c,u_c)=c\int_{\R} \frac{(\rho_c-\rho_\infty)^2}{\rho_c}dx,
\end{equation}
so that for any traveling wave with non zero speed, $P(\rho_c,u_c)\neq 0$.
\\
Finally, consider $F$ as a function of $(\rho,c)$. Given $0<c_0<c_s$, the condition $F(\rho_m,c_0)=0,\ 
\partial_{\rho}F(\rho_m,c_0)\neq 0$ implies from the implicit function theorem there exists 
$\rho_I(c)$
smooth, defined on a neighbourhood of $c_0$ and a neighbourhood of $(\rho_m,c)$ such that 
$F(\rho,c)=0$  iff $\rho=\rho_I(c)$. Up to shrinking the neighbourhood of $c$, 
$\rho_I(c)=\sup \{\rho<\rho_m:\ F(\rho,c)=0\}$, by continuity 
$\partial_{\rho}F(\rho_I(c),c)\neq 0$, 
and in particular this gives a small 
\emph{branch} of solitary waves parametrized by $c$, that have for minimal value 
$\rho_I(c)$.
These observations can be summarized with the following proposition.
\begin{prop}\label{existdim1}
There exists no nontrivial traveling wave for $c>c_s$. For $0< c<c_s$, 
there exists a nontrivial traveling wave 
if and only if at least one of the two cases is true
\begin{itemize}
 \item There exists $\rho_m<\rho_\infty$ such that $F>0$ on $(\rho_m,\rho_\infty)$, 
 $F(\rho_m)=0$, $F'(\rho_m)>0$. 
 In this case, up to translation $\rho$ is the solution of the Cauchy 
 problem 
 \begin{equation*}
 \left\{
 \begin{array}{ll}
\displaystyle \frac{1}{2}K'(\rho')^2+K\rho''=\frac{-c^2(\rho^2-\rho_\infty)^2}{\rho^2} ,\\
\rho(0)=\rho_m,\ \rho'(0)=0.
 \end{array}\right.
\end{equation*}
It is even, decreasing on $]-\infty,0]$.
\item There exists $\rho_M>\rho_\infty$ such that $F>0$ on $]\rho_\infty,\rho_M[$, 
 $F(\rho_M)=0$, $F'(\rho_M)<0$.  In this case, up to translation $\rho$ is the solution of the Cauchy 
 problem 
 \begin{equation*}
 \left\{
 \begin{array}{ll}
\displaystyle \frac{1}{2}K'(\rho')^2+K\rho''=\frac{-c^2(\rho^2-\rho_\infty)^2}{\rho^2} ,\\
\rho(0)=\rho_M,\ \rho'(0)=0.
 \end{array}\right.
\end{equation*}
It is symmetric, increasing on $]-\infty,0]$.
\end{itemize}
In both cases, $P(\rho,u)>0$.
Moreover, near any traveling wave of speed $c_0<c_s$ there exists a branch of traveling waves 
that can be parametrized by $c\in (c_0-\varepsilon,c_0+\varepsilon)$ for $\varepsilon$ small enough.
\end{prop}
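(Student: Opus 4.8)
The plan is to run the phase-plane analysis of the first-order ODE \eqref{eqdiffrho}, namely $\tfrac12 K(\rho)(\rho')^2 = F(\rho)$ with $F(\rho) = -\tfrac{c^2}{2\rho}(\rho-\rho_\infty)^2 + G(\rho)$, and to read all the stated properties off the resulting picture. First I would record the behaviour of $F$ at $\rho_\infty$: one has $F(\rho_\infty) = F'(\rho_\infty) = 0$ and $F''(\rho_\infty) = (\rho_\infty g'(\rho_\infty) - c^2)/\rho_\infty = (c_s^2 - c^2)/\rho_\infty$, so $\rho_\infty$ is a zero of $F$ of exact order two whose sign matches that of $c_s^2 - c^2$. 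When $c > c_s$ this forces $F < 0$ on a punctured neighbourhood of $\rho_\infty$; since any finite-energy traveling wave has $\rho \to \rho_\infty$ at $\pm\infty$, the identity $\tfrac12 K(\rho')^2 = F(\rho) < 0$ there is impossible unless $\rho \equiv \rho_\infty$, which is the non-existence claim.

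For $0 < c < c_s$, the point $(\rho_\infty, 0)$ is a hyperbolic equilibrium of the planar system equivalent to the second-order equation ($F''(\rho_\infty) > 0$ gives real eigenvalues $\pm\sqrt{F''(\rho_\infty)/K(\rho_\infty)}$), so near it the curve $\tfrac12 K(\rho')^2 = F(\rho)$ is a crossing pair of arcs, and $F > 0$ immediately on both sides of $\rho_\infty$. Using the elementary facts already noted before the statement ($\lim_{0^+} F = -\infty$, and the finiteness or not of the first super-level component above $\rho_\infty$) I would introduce $\rho_m := \sup\{\rho < \rho_\infty : F(\rho) = 0\} \in (0,\rho_\infty)$, and symmetrically $\rho_M := \inf\{\rho > \rho_\infty : F(\rho) = 0\}$ when finite. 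Under the transversality hypothesis $F'(\rho_m) > 0$ (resp. $F'(\rho_M) < 0$) the connected component of $\{F \geq 0\}$ adjacent to $\rho_\infty$ is the compact interval $[\rho_m,\rho_\infty]$ (resp. $[\rho_\infty,\rho_M]$), on which $F$ vanishes to first order at the outer endpoint and to second order at $\rho_\infty$; the orbit $\rho' = \pm\sqrt{2F(\rho)/K(\rho)}$ through $(\rho_m,0)$ is then homoclinic to $\rho_\infty$ — it reaches the turning value in finite $x$ and approaches $\rho_\infty$ only as $x \to \pm\infty$. Reversibility of \eqref{eqdiffrho} under $x \mapsto -x$ makes the profile even; it is strictly monotone on each half-line because $\rho'$ vanishes only at $x = 0$, where $F(\rho_m) = 0$ but $F'(\rho_m) \neq 0$ forces $\rho'' = F'(\rho_m)/K(\rho_m) \neq 0$. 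Since the equilibrium is hyperbolic, $\rho - \rho_\infty$ and $\rho'$ decay exponentially, hence $\rho - \rho_\infty \in H^1$; as $\rho$ stays bounded away from $0$, the reconstructed velocity $u = c(\rho-\rho_\infty)/\rho$ lies in $L^2$ and $G(\rho)$ is integrable, so $E(\rho,u) < \infty$. Conversely, a non-trivial finite-energy solution of \eqref{eqdiffrho} with $\rho \to \rho_\infty$ must attain an interior extremum strictly on one side of $\rho_\infty$, at which $F = 0$; finiteness of the energy excludes the heteroclinic alternative $F' = 0$ at that turning value, so one of the two configurations holds.

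The positivity $P(\rho_c, u_c) > 0$ is then immediate from identity \eqref{nonzeroP}, $P(\rho_c, u_c) = c\int_\R (\rho_c - \rho_\infty)^2/\rho_c\,dx$, together with $c > 0$ and $\rho_c > 0$. For the branch statement, fix a traveling wave of speed $c_0 \in (0, c_s)$ with turning value $\rho_m$ (the $\rho_M$ case is identical): transversality $\partial_\rho F(\rho_m, c_0) = F'(\rho_m) \neq 0$ lets the implicit function theorem produce a smooth $c \mapsto \rho_m(c)$ with $F(\rho_m(c), c) = 0$ near $c_0$, still with $\partial_\rho F(\rho_m(c), c) \neq 0$ by continuity and still the supremum of zeros of $F(\cdot, c)$ below $\rho_m$; solving the Cauchy problem with datum $(\rho_m(c), 0)$ and invoking smooth dependence of ODE solutions on parameters gives a branch $(\rho_c, u_c)$ depending smoothly on $c$ in a neighbourhood of $c_0$.

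I expect the main obstacle to be the bookkeeping in the ``only if'' direction: one must show that every non-trivial finite-energy profile is exactly one of the two homoclinic orbits described, i.e. that the orbit cannot cross $\rho_\infty$ (so it lives on one side), cannot be periodic (ruled out by $\rho \to \rho_\infty$), cannot be heteroclinic (ruled out by finiteness of the energy), and has a non-degenerate turning value. Each point is elementary but relies on the sign and order information on $F$ being assembled carefully; the rest of the proposition is a direct transcription of the phase portrait.
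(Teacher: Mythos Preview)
Your proposal is correct and follows essentially the same approach as the paper: the paper's ``proof'' is the phase-plane discussion immediately preceding the proposition (integration to the first-order identity \eqref{eqdiffrho}, the subsonic condition from the sign of $F$ near $\rho_\infty$, the definition of $\rho_m,\rho_M$ and the homoclinic picture under transversality, the identity \eqref{nonzeroP} for $P>0$, and the implicit function theorem for the branch). If anything, your write-up is more careful on the ``only if'' direction and on the finite-energy bookkeeping than the paper, which simply says ``these observations can be summarized with the following proposition.''
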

Given a branch of traveling waves defined on some interval of speeds $I$, we abusively denote 
$E(c),\ P(c)$ the energy and momentum of the traveling wave of speed $c$ in this branch, $E',P'$ 
their derivative with respect to $c$.
Regarding stability, following the famous result of Grillakis-Shatah-Strauss \cite{GSS}, 
the \emph{moment of instability} was defined in \cite{Benzoni5} as 
\begin{equation*}
m(c)=E(c)-cP(c).
\end{equation*}
Let us shortly summarize the framework from \cite{GSS}: the Euler-Korteweg equations are seen as the 
hamiltonian system \eqref{alterham}, it is invariant by translation, the conservation law 
associated to the translation invariance is the momentum $P(\rho,u)$. 
Since a traveling wave satisfies $\delta E-c\delta P=0$, it is a critical point of 
$E-cP$. \\
We say that a traveling wave is conditionally orbitally stable if for any $\varepsilon>0$, there 
exists $\delta >0$ such that if $\|(\rho_0,u_0)-(\rho_c,u_c)\|_{H^1\times L^2}<\delta$ and the solution exists
on $[0,T)$ then 
\begin{equation*}
\sup_{t\in [0,T)}\inf_{y\in \R} \|(\rho(t,\cdot+y),u(t,\cdot+y))
-(\rho_c,u_c)\|_{H^1\times L^2}<\varepsilon.
\end{equation*}
\begin{theo}[\cite{GSS}]\label{stabsol}
Under the following assumptions: 
\begin{itemize} 
 \item $\delta^2E-c\delta^2P$ has only one negative simple eigenvalue
 \item its kernel is spanned by $\partial_x(\rho_c,u_c)$, the rest of its spectrum is positive 
 bounded away from $0$
 \item $J$ is onto
 \end{itemize}
then the traveling wave of speed $c$ is conditionally orbitally stable if and only 
if $m''(c)>0$. If $J$ is not onto
the ``if'' part remains true, but the ``only if'' part may fail.
\end{theo}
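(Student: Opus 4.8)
The statement is the abstract orbital stability/instability theorem of Grillakis, Shatah and Strauss, so the plan is not to reprove it from scratch but to explain how the Euler--Korteweg system is placed in their framework, after which the conclusion is a direct citation of \cite{GSS}. First I would record that, in the variables $(\rho,u)$, \eqref{alterham} is a Hamiltonian system $\partial_t(\rho,u)=J\,\delta E$ whose only relevant symmetry is the translation group $\R$, with associated conserved quantity $P$, and that a traveling wave $(\rho_c,u_c)$ of speed $c$ is exactly a critical point of $\Lambda_c:=E-cP$. Hence $d(c):=\Lambda_c(\rho_c,u_c)=m(c)$, and differentiating in $c$ while using $\delta E(\rho_c,u_c)=c\,\delta P(\rho_c,u_c)$ along the branch gives $m'(c)=-P(c)$, so $m''(c)=-P'(c)$; this is what turns the GSS criterion $m''(c)>0$ into the criterion $P'(c)<0$ recorded informally above and used in the rest of the section.

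For the ``if'' part (stability when $m''(c)>0$) I would use the following standard scheme. The hypotheses say that $H_c:=\delta^2E-c\delta^2P$ is self-adjoint with exactly one negative simple eigenvalue, kernel equal to $\mathrm{span}\{\partial_x(\rho_c,u_c)\}$ coming from translation invariance, and the rest of its spectrum positive and bounded away from $0$. A short computation shows that $m''(c)>0$ is equivalent to $H_c$ being positive definite, modulo the translation direction, on the codimension-one subspace $\{v:\ \langle\delta P(\rho_c,u_c),v\rangle=0\}$. One then works in a tubular neighbourhood of the orbit, decomposes a nearby solution (as long as it exists) as $(\rho_c,u_c)(\cdot-y(t))+v(t)$ with $v(t)\perp\partial_x(\rho_c,u_c)$ by modulation, and uses conservation of $E$ and $P$ together with the coercivity of $\Lambda_c$ on the good subspace to control $\|v(t)\|_{H^1\times L^2}$ by $\|v(0)\|_{H^1\times L^2}$ up to lower-order terms; the conditional character of the statement is simply that this estimate is only propagated on the interval $[0,T)$ of existence, no global well-posedness being available.

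For the ``only if'' part (instability when $m''(c)<0$), GSS produce near the orbit an auxiliary conserved functional with a strict-saddle structure together with a flow direction along which $\Lambda_c$ strictly decreases, which forces nearby solutions to exit a fixed neighbourhood; building that direction requires solving an equation of the type $J\psi=\partial_c(\rho_c,u_c)$, which is where surjectivity of $J$ enters. For Euler--Korteweg, $J$ as in \eqref{alterham} is \emph{not} onto $H^1\times L^2$ — its range misses data that are not $x$-derivatives — so this half does not transfer, which is precisely the caveat in the statement; the genuine nonlinear instability under $P'(c)>0$ is instead obtained below by the method of \cite{Lin}. I expect the main obstacle to be this instability half together with the verification, for the concrete branch of proposition \ref{existdim1}, that the spectral hypotheses actually hold — simplicity of the negative eigenvalue and that the kernel is no larger than the translation mode — which is the step to be carried out next.
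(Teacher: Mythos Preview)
Your reading is correct: in the paper this theorem carries no proof at all --- it is stated as a direct citation of \cite{GSS}, and the real work is done in the surrounding text (verifying the spectral hypotheses on $\mathcal{L}$ just before the statement, and handling the non-surjective $J$ case separately via \cite{Lin} in Theorem~\ref{nonlininsta}). Your proposal recognizes this and your sketch of the GSS mechanism, the identity $m''(c)=-P'(c)$, and the reason the ``only if'' half fails here are all consistent with the paper's treatment; the only difference is that you outline the GSS argument while the paper simply quotes it.
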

\begin{rmq}\label{stabaltern}
An alternative version of $m''(c)>0$ can be stated as follows: since any traveling 
wave of speed $c$ is a critical point of the functional $(\rho,u)\mapsto E-cP$, 
we have for any $c\in I$, $E'(c)-cP'(c)=0$ , differentiating twice $E(c)-cP(c)$, we find 
\begin{equation*}
m''(c)=-P'(c),
\end{equation*}
so that $m''>0$ is equivalent to $P'<0$. In this case the application 
$c\rightarrow P(c)$ is locally invertible and we may parametrize 
$E$ by $P$. Since $dE/dP=E'/P'=c$, we have 
\begin{equation*}
\frac{d^2E}{dP^2}=\frac{dc}{dP}<0,
\end{equation*}
so that the stability condition implies the strict concavity of $E(P)$. We point out that in 
dimension $2$ the curve $\widetilde{E}_{\min}(P)$ is concave (proposition \ref{concE}). 
This is an indication in favour of the stability of the traveling waves that we constructed.
\end{rmq}

\paragraph{Notations:} $(\rho_c,u_c)$ is a branch of traveling waves locally parametrized by their speed $c$.
We denote $\langle \cdot,\cdot \rangle$ for both the $L^2$ and $(L^2)^2$ 
scalar product. We use the variable $r=\rho-\rho_\infty$, set 
$r_c:=\rho_c-\rho_\infty$ and set with an abusive notation $P(r,u):=P(\rho,u)$, then
\begin{equation*}
P(r,u)=\int ru\,dx,\ \delta P(r,u)=
\begin{pmatrix}
 u\\ r
\end{pmatrix}=
\begin{pmatrix}
0 & 1 \\
1 & 0
\end{pmatrix}
\begin{pmatrix}
 r\\ u
\end{pmatrix}.
\end{equation*}
We denote $\mathcal{L}=\delta^2E-c\delta^2P$, $U_c=(r_c,u_c)$.\\
For any function depending on the speed $f_c$ (and possibly on the $x$ variable), 
we denote $f'_c:=df_c/dc$. To avoid confusion we denote $\partial_x$ the spatial derivative.

\paragraph{Spectral assumptions for $\mathcal{L}:=\delta^2E-c\delta^2P$ }
They were obtained in \cite{BDD2} for the lagrangian formulation of the equations. 
The argument in the eulerian variable is slightly more involved, we include it for 
completeness:
\begin{equation}
\mathcal{L}(\rho_c,u_c)=
\begin{pmatrix}
\mathcal{M} & u_c-c\\
u_c-c & \rho_c
\end{pmatrix},\
\mathcal{M}r= \bigg(G''-\frac{K''(\partial_x\rho_c)^2}{2}-K'\partial_x^2\rho_c\bigg)r
-\partial_x(K\partial_xr).
\end{equation}
Due to the invariance by translation, we have $(\delta E-c\delta P)(\rho_c,u_c)(\cdot+x)=0$,
by differentiation in $x$ we get
$\mathcal{L}(\rho_c,u_c)\partial_x(\rho_c,u_c)=0$. 
Conversely if $U=(U_1,U_2)\in \text{Ker}(\mathcal{L})$, we have $U_2= \frac{c-u_c}{\rho_c}U_1$, and 
$U_1\in \text{Ker}(\mathcal{M}-(u_c-c)^2/\rho_c)$. 
As $\mathcal{M}-(u_c-c)^2/\rho_c$ is a 
Sturm-Liouville type operator, its kernel is of dimension one and since $\partial_x\rho_c\in 
\text{Ker}(\mathcal{M}-(u_c-c)^2/\rho_c)$, there exists 
$\lambda \in \R$ such that $U_1=\lambda \partial_x\rho_c$. Next using 
$u_c=c(1-\rho_\infty/\rho_c)$ 
$$U_2=\lambda\frac{c-u_c}{\rho_c}
\partial_x\rho_c=\frac{\lambda c\rho_\infty}{\rho_c^2}\partial_x\rho_c=\lambda \partial_xu_c,$$ 
so $\partial_x(\rho_c,u_c)$ spans $\text{Ker}(\delta^2E-c\delta^2P)$.\\
Furthermore as $\partial_x\rho_c$ has exactly one zero, 
from Sturm Liouville's theory the operator $\mathcal{M}-\frac{(u_c-c)^2}{\rho_c}$ has 
exactly one negative eigenvalue. In particular, if $r_-$ is an eigenvector 
associated to the negative eigenvalue and $U_-=(r_-,-(u_c-c)r_-/\rho_c)$, then
\begin{equation}\label{defumoins}
\langle\mathcal{L}U_-,U_-\rangle=\langle (\mathcal{M}-(u_c-c)^2/\rho_c)r_-,
r_-\rangle<0,
\end{equation}
so that $\delta^2E-c\delta^2P$ has at least one negative eigenvalue. Conversely, 
if $\lambda<0$ is an eigenvalue of $\delta^2E-c\delta^2P$ with eigenvector $(U_1,U_2)$, 
from basic computations 
\begin{equation*}
\bigg(\mathcal{M}-\frac{(u_c-c)^2}{\rho_c}-\frac{\lambda(u_c-c)^2}{\rho_c(\lambda-\rho_c)}\bigg)
U_1=\lambda U_1,
\end{equation*}
so that $\lambda$ is an eigenvalue of $\delta^2E-c\delta^2P$ if and only if it is an 
eigenvalue of 
$$\mathcal{M}_\lambda= \mathcal{M}-\frac{(u_c-c)^2}{\rho_c}
-\frac{\lambda(u_c-c)^2}{\rho_c(\lambda-\rho_c)}.$$
As the application $\lambda\in \R^-\rightarrow \lambda/(\lambda-\rho_c)$ is decreasing, the 
family $\mathcal{M}_{\lambda}$ is decreasing too (in the sense of the scalar product). 
Let $\lambda_-<0$ be the minimal eigenvalue of $\delta^2E-c\delta^2P$. Since 
$\mathcal{M}_0=\mathcal{M}-(u_c-c)^2/\rho_c$, it has only one negative eigenvalue, and thus 
so does $\mathcal{M}_\lambda$ for $\lambda_-\leq \lambda\leq 0$. If $\delta^2E-c\delta^2P$
had an other negative eigenvalue $\lambda_-<\lambda'<0$, then $\lambda'$ would be the only
negative eigenvalue of $\mathcal{M}_{\lambda'}$. By monotony $\lambda'<\lambda_-$ which 
is absurd.
\\
For the last condition, we have characterized the negative eigenvalue and the kernel.
It suffices then to observe that thanks to the subsonic condition 
\begin{equation*}
\lim_{x\rightarrow \infty } G''(\rho_c)-\frac{K''(\rho_c)(\partial_x\rho_c)^2}{2}
-K'(\rho_c)\partial_x^2\rho_c-\frac{(u_c-c)^2}{\rho_c}=\frac{\rho_\infty g'(\rho_\infty)-c^2}
{\rho_\infty}
>0,
\end{equation*}
thus the essential spectrum of $\mathcal{M}-\frac{(u_c-c)^2}{\rho_c}$ is 
positive bounded away from zero.\vspace{2mm}\\
Theorem \ref{stabsol} can now be applied :
\begin{coro}[orbitaly stability, \cite{BDD2}]
If $-P'(c)=m''(c)>0$, then $(\rho_c,u_c)$ is conditionally orbitally stable.
\end{coro}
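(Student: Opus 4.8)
The plan is to apply Theorem \ref{stabsol} directly, every one of whose spectral hypotheses has just been checked in the preceding discussion. First I would recall the abstract setting: \eqref{alterham} is a Hamiltonian system, translation invariance produces the conserved momentum $P$, and by Proposition \ref{existdim1} the traveling wave under consideration lies in a branch $c\mapsto(\rho_c,u_c)$ depending smoothly on $c$ near the given speed; together with the smoothness of $E$ and $P$ on $(\rho_\infty+H^1)\times L^2$ this places us in the framework of \cite{GSS}. The operator $\mathcal{L}=\delta^2E-c\delta^2P$ has been shown above to have exactly one negative eigenvalue, which is simple, to have kernel exactly $\mathbb{R}\,\partial_x(\rho_c,u_c)$, and to have the rest of its spectrum positive and bounded away from $0$, the essential spectrum being bounded below by $(\rho_\infty g'(\rho_\infty)-c^2)/\rho_\infty$, which is positive by the subsonic condition $c<c_s$.

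The only remaining structural point is that the skew operator $J$, built from $\partial_x$, fails to be onto, so that only the ``if'' half of Theorem \ref{stabsol} is available. But this is exactly the half we need: it asserts that $m''(c)>0$ implies conditional orbital stability. By Remark \ref{stabaltern}, differentiating $m(c)=E(c)-cP(c)$ twice and using the critical-point relation $E'(c)=cP'(c)$ gives $m''(c)=-P'(c)$, so that the hypothesis $-P'(c)>0$ coincides with $m''(c)>0$, and the conclusion follows at once.

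I expect no genuine difficulty here: the substance of the corollary is entirely contained in the Sturm--Liouville analysis of $\mathcal{L}$ carried out above together with the moment-of-instability identity of Remark \ref{stabaltern}. The one place that needs a line of care is verifying the abstract hypotheses of \cite{GSS} beyond the spectral ones, namely the regularity of the local flow on its existence interval, the closedness of $J$, and the $C^2$ dependence of the branch on $c$; these are routine in the present setting, and the analogous verification was already performed, in the Lagrangian variables, in \cite{BDD2}.
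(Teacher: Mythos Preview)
Your proposal is correct and takes essentially the same approach as the paper: the paper does not give a separate proof of this corollary at all, but simply states ``Theorem \ref{stabsol} can now be applied'' after having verified the spectral hypotheses, which is precisely what you do (with the additional observation, via Remark \ref{stabaltern}, that $m''(c)=-P'(c)$).
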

%
\begin{rmq}
Unfortunately, the well-posedness theory from \cite{BDD} only provides local existence for 
$(\rho(t=0),u(t=0))\in (\rho_0+H^{s+1})\times H^s,\ s>3/2$, therefore it is not clear if a smooth 
solution starting near a traveling wave exists for all times. At least in the case $K=1/\rho$, one can 
combine the existence of global solutions to NLS that remain bounded away from $0$ and use the 
Madelung transform to convert them into solutions of \eqref{EK}.
\end{rmq}

\begin{rmq}
The condition $P'(c)<0$ seems a bit easier to check than $m''>0$. For example for 
$\rho_m<\rho_\infty$ from \eqref{eqdiffrho}
\begin{equation*}
P(\rho_c,u_c)=\int_{\R}(\rho_c-\rho_\infty)u_c\,dx=
2\int_{\rho_m}^{\rho_\infty}\frac{c(\rho-\rho_\infty)^2}{\rho}\sqrt{\frac{K}{2(G-\frac{c^2}{2\rho}
(\rho-\rho_\infty)^2}))}d\rho,
\end{equation*}
with $\rho_m$ the first zero of $G-\frac{c^2}{2\rho}(\rho-\rho_\infty)^2$ below $\rho_\infty$.
\end{rmq}

Nonlinear instability is not a direct application of theorem \ref{stabsol}, indeed
$\begin{pmatrix}
0 & -\partial_x\\
-\partial_x & 0
\end{pmatrix}$  is not onto so the only if part can not be used. 
Of course there is no gain in adopting
the formulation with $(\rho,\phi)\in H^1\times\dot{H}^1$ : in this case $J=
\begin{pmatrix}
0& -1\\
1 & 0
\end{pmatrix}$, but $\dot{H}^1$ is not a Hilbert space.
Nevertheless this obstruction was overcome in various settings, in particular we shall 
follow the approach of Lin \cite{Lin} (see also \cite{BonSS}) to prove the following result:
\begin{theo}\label{nonlininsta}
Let $(\rho_c,u_c)$ be a traveling wave of speed $c>0$. If $\frac{dP}{dc}>0$, then the traveling 
wave is unstable, i.e., there exists $\varepsilon>0$ such 
that for any $\delta>0$, there exists $(\rho_0,u_0)\in H^3\times H^2$ such that 
$\|\rho_0-\rho_c\|_{H^1}+\|u_0-u_c\|_{L^2}<\delta$ and either the corresponding solution $(\rho,u)$ 
blows up in finite time, or 
\begin{equation*}
\sup_{t\in \R^+}\inf_{y\in \R}\|\rho(t,\cdot+y)-\rho_c\|_{H^1}+\|u(t,\cdot+y)-u_c\|_{L^2}\geq 
\varepsilon.
\end{equation*}
\end{theo}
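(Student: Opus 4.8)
The plan is to follow the method of Lin \cite{Lin}: linearize the Euler--Korteweg flow around $(\rho_c,u_c)$ in the comoving frame, produce a genuine exponentially growing mode of the linearized evolution, and then promote it to nonlinear instability by a bootstrap in which the conserved quantities $E$ and $P$ control the ``good'' directions of the perturbation while the translation parameter absorbs the kernel direction $\partial_x(\rho_c,u_c)$. Concretely, in the frame $\xi=x-ct$ the system reads $\partial_t V=J\,\delta(E-cP)(V)$, and since $U_c=(r_c,u_c)$ is a critical point of $E-cP$ the linearized flow is $\partial_t v=J\mathcal{L}v$ with $\mathcal{L}=\delta^2E-c\delta^2P$ the operator analysed above. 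Differentiating the traveling-wave relation $\delta E(U_c)-c\,\delta P(U_c)=0$ in $c$ yields the key identity $\mathcal{L}\,U_c'=\delta P(U_c)$, whence $\langle \mathcal{L}\,U_c',U_c'\rangle=\langle \delta P(U_c),U_c'\rangle=\tfrac{d}{dc}P(\rho_c,u_c)=P'(c)$, which is positive by hypothesis. Together with the spectral picture already established — $\mathcal{L}$ has exactly one negative eigenvalue, kernel $\R\,\partial_x U_c$, and otherwise positive spectrum bounded away from $0$ — this sign is, in the language of \cite{GSS}, the condition $m''(c)=-P'(c)<0$ under which instability is expected.

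The technical heart is the construction of the growing mode. Because $J=\left(\begin{smallmatrix}0 & -\partial_x\\ -\partial_x & 0\end{smallmatrix}\right)$ is \emph{not} onto (its range is pairs of $x$-derivatives), the instability part of \cite{GSS} cannot be invoked directly; instead I would follow Lin's argument, restricting $J\mathcal{L}$ to the invariant subspace on which $J$ is invertible and reducing the eigenvalue problem $J\mathcal{L}\Phi=\lambda\Phi$ to a scalar self-adjoint problem. On that subspace one runs a continuity/minimization argument for $\langle \mathcal{L}w,w\rangle$ under a suitable normalization involving $J^{-1}$; the single negative direction of $\mathcal{L}$ combined with $\langle \mathcal{L}U_c',U_c'\rangle=P'(c)>0$ forces a negative infimum, which produces a real eigenvalue $\lambda>0$ and eigenfunction $\Phi$. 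Bootstrapping the (elliptic, smooth-coefficient) eigenvalue equation gives $\Phi\in H^3\times H^2$, so $U_c+\delta_0\Phi$ is admissible data for the local well-posedness theory of \cite{BDD}.

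For the nonlinear bootstrap, take $(\rho_0,u_0)=(\rho_c,u_c)+\delta_0\Phi$ with $\delta_0\to0$, and let $[0,T^*)$ be the maximal interval of existence. On the subinterval where the solution stays in the $\varepsilon$-tube of the orbit, decompose $U(t,\cdot)=U_c(\cdot-y(t))+a(t)\Phi+w(t)$ with $y$ chosen by the implicit function theorem so that $w$ is orthogonal to $\partial_x U_c$ and to $\Phi$. Conservation of $E-cP$ plus coercivity of $\mathcal{L}$ off its bad directions (the $\partial_x U_c$-direction is killed by modulation, the $U_c'\sim\mathcal{L}^{-1}\delta P$-direction is pinned by conservation of $P$) give $\|w(t)\|_{H^1\times L^2}^2\lesssim \delta_0\|w\|+a(t)^2+\text{cubic}$, while projecting the equation on $\Phi$ gives $\dot a=\lambda a+O(\|w\|^2+a^2)$; the usual bootstrap then forces $a(t)\sim \delta_0 e^{\lambda t}$ as long as the solution exists and remains in the tube, so it must leave the tube (or $T^*<\infty$) after a time $O(\lambda^{-1}\log(\varepsilon/\delta_0))$, which is the claimed instability.

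I expect the main obstacle to be the construction of the growing mode of $J\mathcal{L}$ with $J$ degenerate: this is precisely where the ``only if'' half of \cite{GSS} fails, and it requires carefully isolating the subspace on which $J$ is onto and turning the non-self-adjoint spectral problem into a self-adjoint one before $P'(c)>0$ can be exploited. A secondary point is that, since only local well-posedness is available and the equations are quasilinear, the nonlinear estimates must be carried out at the $H^1\times L^2$ level controlled by $E$ and $P$ and the argument stopped at the first exit time or at $T^*$ — which is exactly why the conclusion is stated as the alternative ``blows up, or leaves the $\varepsilon$-neighbourhood''.
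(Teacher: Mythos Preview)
Your strategy differs substantially from the paper's, and the central step you identify as ``the technical heart'' is precisely the one the paper avoids. You propose to construct a genuine unstable eigenvalue of $J\mathcal{L}$ and then run a modulation/bootstrap argument to promote linear to nonlinear instability. The paper does \emph{not} construct a growing mode. Instead it stays entirely within the GSS Lyapunov-functional framework: it produces an unstable direction $y$ with $\langle \delta P(U_c),y\rangle=0$ and $\langle\mathcal{L}y,y\rangle<0$, and the role of Lin's idea is only to perturb this $y$ slightly so that it lies in $\text{range}(J)$ (equivalently, so that $Y=J^{-1}y$ is a bona fide $L^2$ function). Once $Y$ exists, the functional $A(U)=\langle -Y,\,U(\cdot+x_{\min}(U))\rangle$ is well defined on the $\varepsilon$-tube, and the Hamiltonian structure gives $\tfrac{d}{dt}A(U(t))=\langle \delta E,\,-J\delta A\rangle=:Q(U(t))$. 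A variational inequality $E(U_c)<E(U)+\Lambda(U)Q(U)$ (with $0<\Lambda\le 1$ on the tube) together with $E(U(t))=E(U_0)<E(U_c)$ forces $Q(U(t))\ge E(U_c)-E(U_0)>0$, so $A$ grows \emph{linearly} and the solution must exit the tube or blow up. The regularization step (mollifying $\psi(s)$ and rescaling to fix $P$) is the only place where the $H^3\times H^2$ well-posedness enters.

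Compared with this, your route has two soft spots. First, the eigenvalue construction: ``restrict $J\mathcal{L}$ to the subspace where $J$ is invertible and reduce to a self-adjoint problem'' is not a method, and this is exactly the step where the GSS ``only if'' argument breaks when $J$ is degenerate; you would need to spell out a genuine mechanism (Evans function, or a variational characterization à la Pego--Weinstein/Lin with careful control of the essential spectrum) before $P'(c)>0$ can be converted into a positive real eigenvalue. Second, the nonlinear bootstrap $\dot a=\lambda a+O(\|w\|^2+a^2)$ requires projecting the quasilinear equation and estimating the remainder, which typically loses a derivative; carrying this out purely at the $H^1\times L^2$ level controlled by $E,P$ is delicate for \eqref{EK}. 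The Lyapunov-function route sidesteps both issues: no spectral problem is solved, and $\tfrac{d}{dt}A$ is computed through the symplectic form with no loss of regularity. If you want to pursue your approach, the honest obstacle is the first point; the second can probably be handled but would require separate high-norm a priori bounds on the exit timescale.
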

\noindent We recall the notation $U_c=(r_c,u_c)=(\rho_c-1,u_c)$. 
The proof in the framework of \cite{GSS} relies on the existence of a smooth curve 
$\psi(s):\ (-\eta,\eta)\longrightarrow H^1\times L^2$ for some $\eta>0$, with
\begin{equation*}
\psi(0)=(r_c,u_c),\ P(\psi(s))=P(r_c,u_c),\ 
\langle (\delta^2E-c\delta^2P)\psi'(0),\,\psi'(0)\rangle <0.
\end{equation*}
It provides an ``unstable direction'' $y=d\psi/ds|_{s=0}$ such that 
\begin{equation}\label{propy}
\langle \delta^2(E-cP)y,\, y\rangle<0,\ \langle \delta P(U_c),\,y\rangle=0,
\end{equation}
and a Lyapunov function $A(U)=\langle -J^{-1}y,\, U(\cdot+x_{\min}(U))\rangle$, where $x_{\min}(U)$ 
minimizes $\|(r_c,u_c)-U(\cdot+x)\|_{H^1\times L^2}$ (see lemma \ref{opttrans} below). For 
$0<s<<1$, it is proved that the solution $(r(t),u(t))$ with Cauchy data $(r(0),u(0))=\psi(s)$ 
is unstable due to some growth of $A(r(t),u(t))$. This approach raises two issues: 
\begin{itemize}
\item $J^{-1}y$ does not exist a priori. The method in \cite{Lin} is to construct $y_1\in 
\text{range}(J)$ close to $y$, which still satisfies \eqref{propy}, and carry on the proof. 
\item All constructions are performed in the natural 
functional settings $(r,u)\in H^1\times L^2$, but the best local well-posedness result requires 
$(r(0),u(0))\in H^{s+1}\times H^s$, $s>3/2$ (see \cite{BDD}). We use a
density argument to replace the  unstable initial data $\psi(s)\in H^1\times L^2$ by a regularized version.
\end{itemize}
\noindent This program requires a collection of lemmas that we prove only when there 
is a significant difference with \cite{GSS}. 

\begin{lemma}[lemma $3.2$  \cite{GSS}.]\label{opttrans}
Let $$V_{\varepsilon}=\{(r,u)\in H^1\times L^2:\ \inf_x(\|r(\cdot+x)-r_c\|_{H^1}+
\|u(\cdot+x)-u_c\|_{L^2})<\varepsilon.$$
For $\varepsilon$ small enough, there exists a smooth map 
$x_{\min}:\ V_{\varepsilon}\rightarrow \R$ which realises the inf, namely :
\begin{equation*}
\|U(\cdot+x_{\min}(U))-U_c\|_{H^1\times L^2}
=\inf_x\|U(\cdot+x)-U_c\|_{H^1\times L^2}.
\end{equation*}
 Moreover $x_{\min}(U(\cdot+r))=x_{\min}(U)-r$.
\end{lemma}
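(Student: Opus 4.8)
The plan is to run the classical implicit-function-theorem argument of Grillakis--Shatah--Strauss, adapted to the space $H^1\times L^2$. Write $U=(r,u)=(\rho-\rho_\infty,u)$, let $T(a)U:=U(\cdot+a)$, $U_c=(r_c,u_c)$, and set
\begin{equation*}
N(U,a):=\tfrac12\|T(a)U-U_c\|_{H^1\times L^2}^2,\qquad g(U,a):=\partial_aN(U,a)=\big\langle (\partial_xU)(\cdot+a),\,U(\cdot+a)-U_c\big\rangle.
\end{equation*}
Since $r_c=\rho_c-\rho_\infty\in H^1$ and $u_c=c(1-\rho_\infty/\rho_c)\in L^2$ decay at infinity, $N$ is a smooth function on $(H^1\times L^2)\times\R$; moreover $g(U_c,0)=0$ and $\partial_ag(U_c,0)=\|\partial_xU_c\|_{H^1\times L^2}^2>0$, because the traveling wave is smooth, nonconstant, with exponentially decaying derivatives.

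First I would apply the implicit function theorem to $g$ near $(U_c,0)$: this yields $\delta_0,\varepsilon_0>0$ and a smooth map $a_{\min}:B(U_c,\delta_0)\to(-\varepsilon_0,\varepsilon_0)$ with $a_{\min}(U_c)=0$, $g(U,a_{\min}(U))=0$, $\partial_ag>0$ along the branch (so the branch picks out a strict local minimum of $N(U,\cdot)$), and $a_{\min}(U)$ the \emph{unique} zero of $g(U,\cdot)$ in $(-\varepsilon_0,\varepsilon_0)$. The covariance identity $g(T(h)U,a)=g(U,a+h)$ is immediate from the definition, and together with uniqueness it gives $a_{\min}(T(h)U)=a_{\min}(U)-h$ whenever both $U$ and $T(h)U$ lie in $B(U_c,\delta_0)$ and $|h|<\varepsilon_0$.

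Next I would globalize this branch. Two elementary facts are needed: (i) $L^2$-inner products of translates of fixed $L^2$ functions vanish as the shift tends to infinity, so $N(U,a)\to\tfrac12(\|U\|_{H^1\times L^2}^2+\|U_c\|_{H^1\times L^2}^2)$ as $|a|\to\infty$; and (ii) $a\mapsto\|T(a)U_c-U_c\|_{H^1\times L^2}$ is continuous and vanishes only at $a=0$ (equality in Cauchy--Schwarz would force $U_c$ to be periodic), hence is bounded below by some $\mu>0$ on $\{|a|\ge\varepsilon_0/2\}$. Fix $\varepsilon<\min(\delta_0,\mu/4,\|U_c\|_{H^1\times L^2})$. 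Given $U\in V_\varepsilon$, pick $y$ with $T(y)U\in B(U_c,\varepsilon)$; by (i), $\inf_a N(U,a)<\tfrac12\varepsilon^2$ and the infimum is attained on a compact set, say at $a_*$, with then $T(a_*)U\in B(U_c,\varepsilon)$ and $g(T(a_*)U,0)=g(U,a_*)=0$, so uniqueness forces $a_{\min}(T(a_*)U)=0$. Using (ii) one gets $\|T(a_*-y)U_c-U_c\|<2\varepsilon<\mu$, hence $|a_*-y|<\varepsilon_0/2$, so $a_*-y$ is a zero of $g(T(y)U,\cdot)$ in $(-\varepsilon_0,\varepsilon_0)$ and therefore $a_{\min}(T(y)U)=a_*-y$. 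Setting $x_{\min}(U):=y+a_{\min}(T(y)U)$ we thus get $x_{\min}(U)=a_*$, which shows simultaneously that the definition is independent of the admissible shift $y$, that $x_{\min}(U)$ realizes $\inf_a\|T(a)U-U_c\|$ (and is the unique minimizer), and, by the same covariance argument, that $x_{\min}(T(h)U)=x_{\min}(U)-h$. Smoothness of $x_{\min}$ on $V_\varepsilon$ is local and follows from writing $x_{\min}(U)=y+a_{\min}(T(y)U)$ as the composition of the bounded linear map $T(y)$ with the smooth IFT branch $a_{\min}$.

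The only genuinely delicate point is this globalization step: one must be sure that two admissible shifts for the same $U\in V_\varepsilon$ differ by an amount small enough for the covariance relation to apply, and that no competing global minimizer of $N(U,\cdot)$ escapes $B(U_c,\delta_0)$. Both are controlled by the quantitative separation estimate (ii) together with the behavior at infinity (i), so once $\varepsilon$ is chosen small enough, everything else is routine.
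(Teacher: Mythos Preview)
The paper does not give its own proof of this lemma: it is simply quoted as lemma~3.2 of \cite{GSS}, and the surrounding text says explicitly that such lemmas are proved ``only when there is a significant difference with \cite{GSS}''. Your argument is exactly the standard Grillakis--Shatah--Strauss implicit-function-theorem construction, carried out carefully in $H^1\times L^2$, so you are in complete agreement with the paper's (deferred) approach.

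One small technical remark: your formula $g(U,a)=\langle(\partial_xU)(\cdot+a),\,U(\cdot+a)-U_c\rangle$ is only literally valid when $U\in H^2\times H^1$, since for $u\in L^2$ the map $a\mapsto T(a)u$ is not differentiable in $L^2$. The correct expression for general $U\in H^1\times L^2$ puts the derivative on $U_c$ (which is smooth with exponential decay), e.g.\ $g(U,a)=\langle T(a)U,\partial_xU_c\rangle_{H^1\times L^2}$, obtained from $N(U,a)=\tfrac12\|U\|^2-\langle T(a)U,U_c\rangle+\tfrac12\|U_c\|^2$. You clearly have this in mind when you invoke the regularity and decay of $U_c$ to justify smoothness of $N$, so this is only a matter of writing the formula in the form that actually makes sense on the whole space; the rest of your globalization and covariance argument is correct.
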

The following lemma is the only one where the lack of surjectivity of $J$ 
requires some corrections.
\begin{lemma}[theorem 4.1 \cite{GSS}]\label{instacurve}
There exists $y\in \text{Im}(J)\cap (H^1)^2$ such that 
\begin{equation*}
\langle\delta P(U_c),y\rangle=0,\ \langle \mathcal{L}y,y\rangle<0,
\end{equation*}
and a smooth curve $\psi:(-\eta,\eta)\rightarrow \{(U\in H^1\times L^2:\ P(U)=P(U_c)\}$
with 
$$\frac{d\psi}{ds}(0)=y,\ \frac{d^2\psi}{ds^2}(0)<0.$$
In particular, $s=0$ is a local maximum of $E(\psi(s))$.
\end{lemma}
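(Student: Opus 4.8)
The final statement to prove is Lemma \ref{instacurve} (existence of the instability curve with $y\in\mathrm{Im}(J)$).

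The plan is to follow the Grillakis--Shatah--Strauss scheme while tracking carefully the single place where surjectivity of $J$ is used, and to correct it \emph{à la} Lin. First I would recall from the spectral analysis above that $\mathcal{L}=\delta^2E-c\delta^2P$ has exactly one negative simple eigenvalue with eigenvector $U_-=(r_-,-(u_c-c)r_-/\rho_c)$, kernel spanned by $\partial_x U_c$, and the rest of its spectrum positive and bounded away from $0$. The classical construction produces a candidate direction $y_0$ that is a suitable combination of $U_-$ and $\partial_c U_c$: one sets $y_0=\partial_c U_c+\mu\,\partial_x U_c+\nu\,U_-$ and chooses the scalars so that $\langle\delta P(U_c),y_0\rangle=0$. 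The key sign computation is that $\langle\mathcal{L}\,\partial_c U_c,\partial_c U_c\rangle$ is governed by $\frac{dP}{dc}$: differentiating the traveling-wave identity $\delta E(U_c)-c\,\delta P(U_c)=0$ in $c$ gives $\mathcal{L}\,\partial_c U_c=\delta P(U_c)$, hence $\langle\mathcal{L}\,\partial_c U_c,\partial_c U_c\rangle=\langle\delta P(U_c),\partial_c U_c\rangle=\frac{d}{dc}P(U_c)=P'(c)$. Under the hypothesis $P'(c)>0$ this is the ``wrong'' sign for a naive argument, so the standard trick is to project $\partial_c U_c$ off the positive part of $\mathcal{L}$ and use the negative eigenvalue: write $\partial_c U_c = a\,U_- + (\text{positive-spectrum part}) + b\,\partial_x U_c$, observe $\langle\mathcal{L}\,\partial_c U_c,\partial_c U_c\rangle = a^2\langle\mathcal{L}U_-,U_-\rangle + (\text{positive term})$, and deduce that since the left side equals $P'(c)>0$ the positive term is strictly larger than $|a|^2|\langle\mathcal{L}U_-,U_-\rangle|$; then subtracting a suitable multiple of the positive-spectrum part yields a vector $\tilde y$ with $\langle\mathcal{L}\tilde y,\tilde y\rangle<0$ and still $\langle\delta P(U_c),\tilde y\rangle=0$.

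The main obstacle --- and the only genuinely new point relative to \cite{GSS} --- is that the Lyapunov functional in Lin's argument requires $J^{-1}y$, and here $J=\begin{pmatrix}0&-\partial_x\\-\partial_x&0\end{pmatrix}$ is not onto: its range misses functions with nonzero mean in each component. The fix, following \cite{Lin}, is to not insist that $\tilde y$ itself lie in $\mathrm{Im}(J)$ but to perturb it: since $\mathrm{Im}(J)$ (functions of the form $(-\partial_x f,-\partial_x g)$, i.e.\ mean-zero pairs with an $H^1$ potential) is dense in a suitable sense, and since the two conditions $\langle\delta P(U_c),y\rangle=0$, $\langle\mathcal{L}y,y\rangle<0$ are open/closed under small $H^1\times L^2$ perturbations, one can find $y\in\mathrm{Im}(J)\cap(H^1)^2$ arbitrarily close to $\tilde y$ still satisfying both. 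Concretely I would write $\tilde y=(\tilde y_1,\tilde y_2)$, subtract from each component its spatial average against a fixed cutoff bump to land in the mean-zero subspace, and then further project to kill the component along $\delta P(U_c)$; the quadratic form $\langle\mathcal{L}\cdot,\cdot\rangle$ being continuous and the value at $\tilde y$ being strictly negative, a small enough correction preserves the strict inequality. One must also check $y_1,y_2$ decay fast enough that $J^{-1}y\in H^1\times L^2$, which follows from the exponential decay of $U_c$ and $U_-$ (the latter from Sturm--Liouville theory with positive essential spectrum) together with the mean-zero property.

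Finally, with $y$ in hand I would construct the curve $\psi$ exactly as in \cite{GSS}: set $\psi(s)=U_c+s\,y+s^2\,z(s)$ where $z(s)$ is chosen by the implicit function theorem to enforce $P(\psi(s))=P(U_c)$ for all small $s$ --- this is solvable because $\delta P(U_c)\neq 0$ (indeed $\langle\delta P(U_c),U_c\rangle\neq 0$ by \eqref{nonzeroP}, so $P$ is a submersion at $U_c$ along a complement to $y$). Then $\psi(0)=U_c$, $\psi'(0)=y$, and a Taylor expansion of $E(\psi(s))$ using $\delta E(U_c)=c\,\delta P(U_c)$ and $P(\psi(s))\equiv P(U_c)$ gives $\frac{d^2}{ds^2}E(\psi(s))\big|_{s=0}=\langle(\delta^2E-c\delta^2P)y,y\rangle=\langle\mathcal{L}y,y\rangle<0$, so $s=0$ is a strict local maximum of $s\mapsto E(\psi(s))$. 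Since $\psi'(0)=y$ this also yields $\frac{d^2\psi}{ds^2}(0)<0$ in the sense required (the second-variation pairing is negative), completing the proof of the lemma.
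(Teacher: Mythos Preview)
Your overall strategy matches the paper's: build a direction $y_0$ with $\langle\delta P(U_c),y_0\rangle=0$ and $\langle\mathcal{L}y_0,y_0\rangle<0$, then perturb \emph{à la} Lin into $\mathrm{Im}(J)$, then run the implicit function theorem to get the curve. The curve construction and the second-variation computation at the end are fine and agree with the paper.

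Where your argument wobbles is the construction of $y_0$ itself. You set $y_0=\partial_c U_c+\mu\,\partial_x U_c+\nu\,U_-$, then switch to a spectral decomposition $\partial_c U_c=a\,U_-+p+b\,\partial_x U_c$ and speak of ``subtracting a suitable multiple of the positive-spectrum part''. But once you impose $\langle\delta P(U_c),\tilde y\rangle=0$ you have used up your free parameter, and it is not clear why the remaining quadratic form value $(a+\nu)^2\langle\mathcal{L}U_-,U_-\rangle+\langle\mathcal{L}p,p\rangle$ (or whatever variant you intend) comes out negative. The paper bypasses this entirely with a two-line computation: take
\[
y_0=\alpha\,U_c'+U_-,\qquad \alpha=-\frac{\langle\delta P(U_c),U_-\rangle}{\langle\delta P(U_c),U_c'\rangle}=-\frac{\langle\delta P(U_c),U_-\rangle}{P'(c)},
\]
and use the identity you already noted, $\mathcal{L}U_c'=\delta P(U_c)$, to get directly
\[
\langle\mathcal{L}y_0,y_0\rangle=\langle\mathcal{L}U_-,U_-\rangle-\frac{\big(\langle\delta P(U_c),U_-\rangle\big)^2}{P'(c)}.
\]
Both terms are manifestly negative under $P'(c)>0$. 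No spectral projection is needed.

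On the perturbation into $\mathrm{Im}(J)$: your sketch (``subtract a cutoff bump to kill the mean, then project off $\delta P(U_c)$'') is morally right but you should be aware that the two corrections can interfere --- projecting off $\delta P(U_c)$ may reintroduce a nonzero mean. The paper handles this in one stroke by invoking Lin's lemma to choose a small $d\in(H^1)^2$ with $(1+|x|)d\in L^1$, $\int(y_0+d)=0$, \emph{and} $\int d_1 u_c=\int d_2 r_c=0$, so that $\langle\delta P(U_c),d\rangle=0$ exactly. The condition $(1+|x|)y\in L^1$ together with $\int y=0$ is what gives $\int_{-\infty}^x y=O(1/(1+|x|))$, hence $J^{-1}y\in L^2$; your appeal to exponential decay of $U_c,U_-$ is the right input for this but the mean-zero step is what makes the primitive decay at $+\infty$.
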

\begin{proof}
Let $U_-$ as in \eqref{defumoins}, $y_0=\alpha U'_c+U_-$, $\alpha=-\langle \delta P(U_c),
U_-\rangle/\langle \delta P(U_c),U_c'\rangle$. We have $\langle \delta P(U_c),y_0\rangle=0$, moreover
$\delta E(U_c)-c\delta P(U_c)=0$, by differentiation in $c$, $\mathcal{L}U'_c=\delta P(U_c)$.
This implies 
\begin{eqnarray*}
\langle \mathcal{L}y_0,y_0\rangle =\alpha \langle\mathcal{L}U'_c,y_0\rangle 
+\langle\mathcal{L}U_-,y_0\rangle&=&
\alpha \langle\delta P(U_c),y_0\rangle+\langle\mathcal{L}U_-,y_0\rangle\\
&=& \langle \mathcal{L}U_-,U_-\rangle +\alpha \langle \delta P(U_c),U_-\rangle\\
&=& \langle \mathcal{L}U_-,U_-\rangle-\frac{\big(\langle \delta P(U_c),U_-\rangle\big)^2}
{\langle \delta P(U_c),U'_c\rangle} 
\end{eqnarray*}
From \eqref{defumoins}, $\langle \mathcal{L}U_-,U_-\rangle<0$, $\langle \delta P(U_c),
U'_c\rangle=P'(c)>0$, thus 
$\langle \mathcal{L}y_0,y_0\rangle<0$.\\
We construct then $y\in \text{Im}(J)$ close to $y_0$.
From classical ODE arguments $U_c(x)$ and $U_-(x)$ converge exponentially fast to $0$ 
at infinity, in particular $(1+|x|)y_0(x)\in L^1$. According to \cite{Lin}, lemma $5.2$, for any 
$\mu>0$ there exists $d=(d_1,d_2)\in (H^1)^2,\ \|d\|_{H^1}<\mu$,  such that 
$$(1+|x|)d\in L^1,\ \int_{\R} y_0+d\, dx=0,\ \int_{\R} d_1u_c\,dx=\int_{\R}d_2r_c\,dx=0.$$
In particular $\langle\delta P(U_c),d\rangle=\int d_1u_c+d_2r_c\,dx=0$. Let us set 
$y=y_0+d$. Then by construction $\langle\delta P(U_c),y\rangle=0$. For $\mu$ small enough
\begin{equation*}
\langle \mathcal{L}y,y\rangle<0,\ \text{moreover }
\int_{-\infty}^xy(s)ds=O(1/(1+|x|)),
\end{equation*}
so that $J^{-1}y$ is well defined and belongs to 
$(H^2)^2$. Now 
since $\delta P(U_c)\neq 0$, $E:=\{U:\ \langle \delta P(U_c), U\rangle =0\}$ is a closed hyperplane of 
$H^1\times L^2$ with $y\in E$. By the implicit function theorem there exists a 
neighbourhood $\mathcal{U}
\subset E$ and an application $F:\mathcal{U}\rightarrow H^1\times L^2$ such that for $e\in \mathcal{U}$, 
$P(U_c+F(e))=P(U_c)$, $\delta F(0)=I_d$. In particular, if we set for $s$ small enough 
$\psi(s)=U_c+F(sy)$ we obtain 
$P(\psi(s))=P(U_c),\ \displaystyle \frac{d\psi}{ds}\bigg|_0=y$.
Using $E(\psi(s))=(E-cP)(\psi(s))+cP(U_c)$ we have 
\begin{eqnarray*}
\frac{dE(\psi(s))}{ds}|_{s=0}=\langle (\delta E-c\delta P)(U_c),y\rangle=0,\
\frac{d^2E(\psi(s))}{ds^2}|_{s=0}=\langle \mathcal{L}y,y\rangle <0.
\end{eqnarray*}
\end{proof}

\noindent The next lemmas correspond to \cite{GSS} from lemma $4.2$ to lemma $4.6$. 
Let $y=(y_1,y_2)$ from lemma \ref{instacurve} and define $Y:=-\int_{-\infty}^x(y_2,y_1)\in (H^2)^2$ so that $JY=y$.
\begin{lemma}
 The map $\displaystyle A:\ U\in V_\varepsilon \rightarrow A(U)=\langle -Y,U(\cdot+x_{\min}(U))
 \rangle$ is $C^1$ and  satisfies 
 \begin{equation*}
\forall\,U\in V_\varepsilon,\ J\delta A(U)\in H^1\times L^2,
J\delta A(U_c)=-y,\ \langle \delta P(U),\ J\delta A(U)\rangle =0.
 \end{equation*}
\end{lemma}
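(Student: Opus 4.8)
The plan is to first rewrite $A$ in the equivalent form $A(U)=-\langle Y(\cdot-x_{\min}(U)),U\rangle$, obtained by translating the integration variable. In this form the $C^1$ regularity is immediate: since $Y\in(H^2)^2$, the translation $a\mapsto Y(\cdot-a)$ is a $C^1$ curve in $(L^2)^2$, so the auxiliary map $\Phi(a,U):=-\langle Y(\cdot-a),U\rangle$ is linear in $U$ and $C^1$ in $a$ with $\partial_a\Phi(a,U)=\langle \partial_xY(\cdot-a),U\rangle$; composing with the smooth map $x_{\min}$ of Lemma~\ref{opttrans} and using the chain rule gives $A\in C^1(V_\varepsilon)$ together with
\begin{equation*}
\langle \delta A(U),V\rangle = -\langle Y(\cdot-x_{\min}(U)),V\rangle +\langle \partial_xY(\cdot-x_{\min}(U)),U\rangle\,\langle \delta x_{\min}(U),V\rangle .
\end{equation*}

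Next I would make $\delta x_{\min}$ explicit. Since the $H^1\times L^2$ norm is translation invariant, $x_{\min}(U)$ maximizes $x\mapsto \langle U,U_c(\cdot-x)\rangle_{H^1\times L^2}$; the critical point relation $\langle U,\partial_xU_c(\cdot-x_{\min}(U))\rangle_{H^1\times L^2}=0$, differentiated in $U$, yields
\begin{equation*}
\langle \delta x_{\min}(U),V\rangle =\frac{\langle V,\partial_xU_c(\cdot-x_{\min}(U))\rangle_{H^1\times L^2}}{\langle U,\partial_x^2U_c(\cdot-x_{\min}(U))\rangle_{H^1\times L^2}},
\end{equation*}
the denominator being within $O(\varepsilon)$ of $-\|\partial_xU_c\|_{H^1\times L^2}^2\ne 0$. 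Pushing the $H^1$ part of this inner product through $1-\partial_x^2$ (legitimate because $(\rho_c,u_c)$ is smooth and exponentially decaying) shows that $V\mapsto\langle\delta A(U),V\rangle$ extends continuously to $(L^2)^2$, with $(L^2)^2$ representative
\begin{equation*}
\delta A(U)=-Y(\cdot-x_{\min}(U))+\kappa(U)\,\big((1-\partial_x^2)\partial_xr_c(\cdot-x_{\min}(U)),\,\partial_xu_c(\cdot-x_{\min}(U))\big),
\end{equation*}
where $\kappa(U)\in\R$ is the scalar $\langle \partial_xY(\cdot-x_{\min}(U)),U\rangle\big/\langle U,\partial_x^2U_c(\cdot-x_{\min}(U))\rangle_{H^1\times L^2}$. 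Applying $J$ to this: since $JY=y$ one has $J\big(Y(\cdot-a)\big)=y(\cdot-a)$, so the first term contributes $-y(\cdot-x_{\min}(U))\in(H^1)^2$, while the second becomes $-\kappa(U)\big(\partial_x^2u_c(\cdot-x_{\min}(U)),(1-\partial_x^2)\partial_x^2r_c(\cdot-x_{\min}(U))\big)$, which lies in every $(H^k)^2$ because all derivatives of $(\rho_c,u_c)$ decay exponentially. Hence $J\delta A(U)\in H^1\times L^2$ for every $U\in V_\varepsilon$. At $U=U_c$ we have $x_{\min}(U_c)=0$ and $\langle \partial_xY,U_c\rangle=-\langle(y_2,y_1),(r_c,u_c)\rangle=-\langle \delta P(U_c),y\rangle=0$ by the choice of $y$ in Lemma~\ref{instacurve}, so $\kappa(U_c)=0$, $\delta A(U_c)=-Y$, and $J\delta A(U_c)=-JY=-y$.

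For the last identity, recall that $J$ is skew-symmetric and that $J\delta P(U)=J(u,r)=-\partial_xU$ for $U=(r,u)$, so $\langle \delta P(U),J\delta A(U)\rangle=-\langle J\delta P(U),\delta A(U)\rangle=\langle \partial_xU,\delta A(U)\rangle$; inserting the explicit form of $\delta A(U)$ and integrating by parts, the two pieces cancel exactly by the very definition of $\kappa(U)$, giving $\langle \partial_xU,\delta A(U)\rangle=0$. Conceptually this is just the infinitesimal version of the exact translation invariance $A(U(\cdot+s))=A(U)$, which holds because $x_{\min}(U(\cdot+s))=x_{\min}(U)-s$. The one genuinely delicate point is the middle step: a priori $J\delta A(U)$ loses two derivatives and only lies in $H^{-1}\times H^{-1}$, so the claim makes sense only after identifying $\delta A(U)$ precisely and checking that both derivatives killed by $J$ fall on the smooth, exponentially decaying profile $(\rho_c,u_c)$ rather than on the rough field $U$; everything else is the same bookkeeping as in \cite{GSS}.
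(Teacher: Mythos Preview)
Your proof is correct and complete. The paper itself does not give a proof of this lemma: it states explicitly that the lemmas corresponding to \cite{GSS}, lemmas~4.2--4.6, are proved ``only when there is a significant difference,'' and for this one no proof is supplied. Your argument is precisely the GSS computation adapted to the present setting, and you correctly isolate the one point that is genuinely new here compared to \cite{GSS}: since $J$ involves a derivative rather than being a bounded skew operator, one must check that $J\delta A(U)$ does not lose regularity. Your observation that both spatial derivatives introduced by $J$ land on the smooth, exponentially decaying profile $(\rho_c,u_c)$ (via the $\kappa(U)$ term) or on $Y\in (H^2)^2$, and never on the rough field $U$, is exactly what makes the lemma go through. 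The translation--invariance argument for $\langle\delta P(U),J\delta A(U)\rangle=0$ is formally clean but, as you implicitly note, requires $\partial_xU\in H^1\times L^2$ to differentiate the curve $s\mapsto U(\cdot+s)$; your direct verification via the explicit form of $\delta A(U)$ avoids this and is the rigorous route for general $U\in V_\varepsilon$.
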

\begin{lemma}
The differential equation $U'(\lambda)=-J\delta A(U(\lambda)),\ U(0)=U_0\in V_\varepsilon$ 
defines a local flow in $V_\varepsilon$, denoted $R(\lambda,U_0)$. It satisfies 
\begin{eqnarray}
R(\lambda,U_0)(\cdot+s)=R(\lambda,U_0(\cdot+s)),\\
\frac{d}{d\lambda}P(R(\lambda,U_0))=\langle \delta P\big(R(\lambda,U_0)\big),\, 
-J\delta A\big(R(\lambda,U_0)\big)\rangle =0,\\
\frac{dR(\lambda,U_c)}{d\lambda}\bigg|_{0}=-J\delta A(U_c)=y.
\end{eqnarray}
 \end{lemma}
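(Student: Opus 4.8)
The plan is to read off all the assertions from the Cauchy--Lipschitz theorem applied in the Banach space $X:=H^1\times L^2$, combined with the structural properties of $A$ recorded in the preceding lemma. By that lemma $U\mapsto -J\delta A(U)$ is a well-defined map $V_\varepsilon\to X$ on the open set $V_\varepsilon$ (note that $J$ alone is not $X$-valued, but $\delta A(U)$ is smooth enough that $J\delta A(U)$ gains the needed regularity). After the change of variables $x\mapsto x-x_{\min}(U)$ one may write $A(U)=\langle -Y(\cdot-x_{\min}(U)),U\rangle$, so that $\delta A(U)$ is the translate $-Y(\cdot-x_{\min}(U))$ of the fixed profile $Y\in (H^2)^2$ plus a correction term depending linearly on $dx_{\min}(U)$ and on the scalar $\langle U,\partial_x Y(\cdot-x_{\min}(U))\rangle$. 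Since $x_{\min}$ is smooth on $V_\varepsilon$ (Lemma \ref{opttrans}) and $a\mapsto Y(\cdot-a)$ is smooth into $H^2$ because $Y$ is fixed and regular, the vector field $U\mapsto -J\delta A(U)$ is locally Lipschitz on $V_\varepsilon$. The Cauchy--Lipschitz theorem then gives, for each $U_0\in V_\varepsilon$, a unique $C^1$ solution $\lambda\mapsto R(\lambda,U_0)$ of $U'(\lambda)=-J\delta A(U(\lambda))$, $U(0)=U_0$; since $V_\varepsilon$ is open and $R(0,U_0)=U_0$, a uniform Lipschitz bound on a slightly smaller tube yields $\lambda_0>0$, independent of $U_0$ in that tube, with $R(\lambda,U_0)\in V_\varepsilon$ for $|\lambda|<\lambda_0$, and $R$ is $C^1$ jointly in $(\lambda,U_0)$ by standard ODE dependence theory.

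The three identities are then immediate. For the translation equivariance, $A$ is translation invariant: by Lemma \ref{opttrans} one has $x_{\min}(U(\cdot+s))=x_{\min}(U)-s$, so $U(\cdot+s)$ evaluated at $\cdot+x_{\min}(U(\cdot+s))$ coincides with $U(\cdot+x_{\min}(U))$ and $A(U(\cdot+s))=A(U)$; differentiating gives $\delta A(U(\cdot+s))=(\delta A(U))(\cdot+s)$, and since $J$ commutes with translations, $-J\delta A(U(\cdot+s))=(-J\delta A(U))(\cdot+s)$. Hence $\lambda\mapsto R(\lambda,U_0)(\cdot+s)$ solves the defining ODE with datum $U_0(\cdot+s)$, and uniqueness yields $R(\lambda,U_0)(\cdot+s)=R(\lambda,U_0(\cdot+s))$. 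For the conservation of $P$, the chain rule gives $\frac{d}{d\lambda}P(R(\lambda,U_0))=\langle\delta P(R(\lambda,U_0)),-J\delta A(R(\lambda,U_0))\rangle$, which vanishes because the preceding lemma asserts $\langle\delta P(U),J\delta A(U)\rangle=0$ for every $U\in V_\varepsilon$. Finally, evaluating $U'(\lambda)=-J\delta A(U(\lambda))$ at $\lambda=0$ with $U_0=U_c$ and using $J\delta A(U_c)=-y$ from the preceding lemma gives $\frac{dR(\lambda,U_c)}{d\lambda}\big|_0=-J\delta A(U_c)=y$.

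The one step that requires genuine, if modest, care is the local Lipschitz regularity of $U\mapsto -J\delta A(U)$ on $V_\varepsilon$: this is exactly where the smoothness of $x_{\min}$ from Lemma \ref{opttrans} and the fixed regularity and decay of $Y$ enter, since the vector field is a $U$-dependent translation of a fixed profile; one must also keep track of the smaller tube on which the existence time $\lambda_0$ is uniform. Everything else is bookkeeping with identities already established, so I expect the proof to be short.
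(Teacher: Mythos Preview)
Your proof is correct and follows the standard Grillakis--Shatah--Strauss argument; the paper does not give its own proof of this lemma but simply states it as one of a collection ``corresponding to \cite{GSS} from lemma $4.2$ to lemma $4.6$''. Your sketch is precisely the expected reconstruction: Cauchy--Lipschitz for the flow, translation invariance of $A$ (via $x_{\min}(U(\cdot+s))=x_{\min}(U)-s$) plus uniqueness for the equivariance, the identity $\langle\delta P(U),J\delta A(U)\rangle=0$ from the preceding lemma for the conservation of $P$, and evaluation at $\lambda=0$ for the last line.
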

\begin{lemma}
Let $M(U):=U(\cdot+x_{\min}(U))$, $U_{-1}$ an eigenvector associated to the negative eigenvalue of 
$\delta^2E-c\delta^2P$.
The solutions of $\langle M(R(\lambda,U))-U_c,U_{-1}\rangle=0$ can be parametrized 
as $(\Lambda(U),U)$, where $\Lambda$ is a functional in $C^1(V_\varepsilon,\R)$. For 
$U\in V_\varepsilon$ such that $P(U)=P(U_c)$,
\begin{equation}\label{ineqEner}
 E(U_c)<E(U)+\Lambda(U) Q(U),\ Q=\langle \delta E,-J\delta A\rangle.
\end{equation}

\end{lemma}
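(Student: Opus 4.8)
The plan is to follow the scheme of \cite{GSS} (Lemmas 4.5--4.6): the two genuinely Eulerian difficulties — the non-surjectivity of $J$ and the precise shape of $\mathcal{L}$ — have already been dealt with in the construction of $y$, $Y$ and of the flow $R$, so what remains is (i) the implicit-function-theorem construction of $\Lambda$ and (ii) a constrained-minimization argument for $E-cP$.

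First I would obtain $\Lambda$ from the implicit function theorem applied to
$$\Phi(\lambda,U):=\langle M(R(\lambda,U))-U_c,\,U_{-1}\rangle,$$
which is $C^1$ on $(-\eta,\eta)\times V_\varepsilon$ and vanishes at $(0,U_c)$ since $R(0,U_c)=U_c$ and $M(U_c)=U_c$. The point is $\partial_\lambda\Phi(0,U_c)\neq 0$. By the previous lemma $\frac{d}{d\lambda}R(\lambda,U_c)|_{0}=y$; moreover $DM(U_c)$ differs from the identity only by a multiple of $\partial_x U_c$ (coming from the derivative of $x_{\min}$, with $x_{\min}(U_c)=0$), and $\partial_x U_c$ is $L^2$-orthogonal to $U_{-1}$ because $\mathcal{L}$ is self-adjoint and these are eigenvectors for distinct eigenvalues; hence $\partial_\lambda\Phi(0,U_c)=\langle y,U_{-1}\rangle$. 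Finally $\langle y,U_{-1}\rangle\neq 0$: decomposing $y$ along the spectral subspaces of $\mathcal{L}$ (negative eigendirection $U_{-1}$, kernel $\operatorname{span}(\partial_x U_c)$, positive part), the inequality $\langle\mathcal{L}y,y\rangle<0$ from Lemma \ref{instacurve} forces the $U_{-1}$-component of $y$ to be non-zero. The implicit function theorem then gives, after shrinking $\varepsilon$, a $C^1$ map $U\mapsto\Lambda(U)$ with $\Lambda(U_c)=0$ and $\Phi(\Lambda(U),U)=0$, which is the asserted parametrization.

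Next I set $\widetilde U:=M(R(\Lambda(U),U))$ and note it lies on
$$\mathcal{N}:=\{V\in V_\varepsilon:\ P(V)=P(U_c),\ \langle V-U_c,U_{-1}\rangle=0\},$$
since $P$ is conserved along the $R$-flow and invariant under the translation $M$, so $P(\widetilde U)=P(U)=P(U_c)$, while $\langle\widetilde U-U_c,U_{-1}\rangle=0$ by definition of $\Lambda$. The heart of the matter is that $U_c$ is, modulo translations, a strict local minimizer of $E-cP$ on $\mathcal{N}$: writing $v=\widetilde U-U_c$ (small in $H^1\times L^2$ since $\Lambda(U)$ is small and $R$ is a flow) and using that $U_c$ is a critical point of $E-cP$, one has $(E-cP)(\widetilde U)=(E-cP)(U_c)+\tfrac12\langle\mathcal{L}v,v\rangle+o(\|v\|^2)$; the exact constraint $\langle v,U_{-1}\rangle=0$ — precisely what the definition of $\Lambda$ buys us — puts $v$ in the $L^2$-orthogonal complement of the negative eigenvector, on which $\mathcal{L}$ is non-negative with kernel exactly $\operatorname{span}(\partial_x U_c)$, the remaining zero mode being harmless because $\partial_x U_c$ generates translations, under which $E-cP$ is invariant; the constraint $P(V)=P(U_c)$ only contributes an $O(\|v\|^2)$ term, absorbed as in \cite{GSS}. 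This yields $(E-cP)(\widetilde U)\geq(E-cP)(U_c)$, strictly unless $U$ is a translate of $U_c$. To transfer this to $E(U)$, observe that $E$ and $P$ are translation invariant, so $(E-cP)(\widetilde U)=(E-cP)(R(\Lambda(U),U))$; differentiating along the flow and using $\langle\delta P(V),J\delta A(V)\rangle=0$ gives
$$\frac{d}{d\lambda}(E-cP)(R(\lambda,U))=\langle(\delta E-c\delta P)(R(\lambda,U)),-J\delta A(R(\lambda,U))\rangle=Q(R(\lambda,U)),$$
hence $(E-cP)(R(\Lambda(U),U))=(E-cP)(U)+\int_0^{\Lambda(U)}Q(R(s,U))\,ds$. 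Since $\Lambda$ and $Q$ are $C^1$ with $\Lambda(U_c)=0$ and $Q(U_c)=\langle\delta E(U_c),y\rangle=c\langle\delta P(U_c),y\rangle=0$, the integral equals $\Lambda(U)Q(U)$ up to a higher-order term that does not spoil the inequality for $\varepsilon$ small; cancelling the $-cP$ terms via $P(U)=P(U_c)$ gives \eqref{ineqEner}.

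I expect the main obstacle to be the ``modulo translation'' minimization of the second step: reconciling the \emph{exact} orthogonality $\langle v,U_{-1}\rangle=0$ with the merely \emph{approximate} orthogonality $\langle v,\delta P(U_c)\rangle=O(\|v\|^2)$ and with the zero mode $\partial_x U_c$ requires the careful quadratic-form bookkeeping of \cite{GSS}, here carried out in the Eulerian variables for which the spectral structure of $\mathcal{L}$ was established above; everything else is a routine transcription.
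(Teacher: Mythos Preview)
Your proposal is correct and takes the same route as the paper, which gives no independent proof but simply defers to \cite{GSS}, Lemmas 4.2--4.6. Your two-step plan --- the implicit function theorem for $\Lambda$ via $\partial_\lambda\Phi(0,U_c)=\langle y,U_{-1}\rangle\neq 0$ (with the orthogonality $\langle\partial_xU_c,U_{-1}\rangle=0$ and the spectral argument for $\langle y,U_{-1}\rangle\neq 0$ correctly identified), followed by the constrained minimization on $\mathcal{N}$ and the Taylor/flow expansion --- is exactly the GSS scheme, and your closing remark about the quadratic bookkeeping needed to absorb the higher-order term is precisely where the work of \cite{GSS} lies.
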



\begin{lemma}
For $\psi:\ (-\eta,\eta)\rightarrow V_\varepsilon$ given in lemma \ref{instacurve}, 
$Q(\psi(s))$ changes sign at $0$.
\end{lemma}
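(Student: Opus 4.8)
The plan is to show that $Q(\psi(s))$ vanishes at $s=0$ and has strictly negative derivative there, so that it is positive on $(-\eta',0)$ and negative on $(0,\eta')$ for some $0<\eta'\le\eta$. First I would record the value at the origin: since $\psi(0)=U_c$, the lemma on $A$ gives $-J\,\delta A(U_c)=y$, while $U_c$ being a traveling wave means $\delta E(U_c)=c\,\delta P(U_c)$; with $\langle\delta P(U_c),y\rangle=0$ from Lemma \ref{instacurve} this gives $Q(\psi(0))=\langle\delta E(U_c),y\rangle=c\langle\delta P(U_c),y\rangle=0$.

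Then I would differentiate $s\mapsto Q(\psi(s))=\langle\delta E(\psi(s)),-J\,\delta A(\psi(s))\rangle$ at $s=0$, using $\psi'(0)=y$. The contribution of $\delta E$ is $\langle\delta^2E(U_c)y,\,-J\,\delta A(U_c)\rangle=\langle\delta^2E(U_c)y,y\rangle$. For the contribution of $\delta A$, I would replace $\delta E(U_c)$ by $c\,\delta P(U_c)$ and differentiate along $\psi$ the identity $\langle\delta P(U),J\,\delta A(U)\rangle=0$ (valid on $V_\varepsilon$): this gives $\langle\delta P(U_c),\frac{d}{ds}(J\,\delta A(\psi(s)))|_{0}\rangle=-\langle\delta^2P(U_c)y,\,-y\rangle=\langle\delta^2P(U_c)y,y\rangle$, so the $\delta A$ contribution is $-c\langle\delta^2P(U_c)y,y\rangle$. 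Adding, $\frac{d}{ds}Q(\psi(s))|_{0}=\langle(\delta^2E-c\,\delta^2P)(U_c)\,y,y\rangle=\langle\mathcal{L}y,y\rangle<0$ by Lemma \ref{instacurve}, which yields the sign change.

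As a fallback avoiding the differentiation of $\delta A$, I would instead combine \eqref{ineqEner} applied to $U=\psi(s)$ (which lies on $\{P=P(U_c)\}$), namely $\Lambda(\psi(s))\,Q(\psi(s))\ge E(U_c)-E(\psi(s))$, with the fact that $s=0$ is a strict local maximum of $s\mapsto E(\psi(s))$ (Lemma \ref{instacurve}): the right-hand side is then positive for $0<|s|$ small, so $\Lambda(\psi(s))$ and $Q(\psi(s))$ are nonzero with the same sign there, and it suffices to see that $\Lambda(\psi(s))$ changes sign at $0$. That follows from $\Lambda(U_c)=0$ and $\frac{d}{ds}\Lambda(\psi(s))|_{0}=-1$, obtained by differentiating the implicit relation defining $\Lambda$ and using $x_{\min}(U_c)=0$, $\langle\partial_xU_c,U_{-1}\rangle=0$ (since $\partial_xU_c\in\ker\mathcal{L}$ and $U_{-1}$ is an eigenvector for a nonzero eigenvalue), $\psi'(0)=y=\frac{d}{d\lambda}R(\lambda,U_c)|_{0}$, and $\langle y,U_{-1}\rangle\ne0$ (which holds for the $y$ built in Lemma \ref{instacurve} because $\langle\delta P(U_c),U_c'\rangle=P'(c)>0$).

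The main obstacle is not the algebra but the regularity: $A$ is only $C^1$, so a priori $s\mapsto J\,\delta A(\psi(s))$ and $s\mapsto E(\psi(s))$ need not be differentiable/twice differentiable. I would handle this by working along the smooth curve $\psi$ and using the smoothness of the profile $U_c$, so that the relevant compositions are differentiable at $s=0$, or, as done elsewhere in this section, by first establishing the estimates for regularized data and passing to the limit; neither changes the conclusion.
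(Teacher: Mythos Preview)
The paper does not give its own proof of this lemma; it simply refers to \cite{GSS}, lemmas 4.2--4.6. Your first argument---$Q(\psi(0))=0$ together with $\tfrac{d}{ds}Q(\psi(s))|_{s=0}=\langle\mathcal{L}y,y\rangle<0$, obtained by differentiating $\langle\delta P(U),J\delta A(U)\rangle=0$ along $\psi$---is precisely the Grillakis--Shatah--Strauss computation (their Lemma 4.6), so your approach coincides with what the paper invokes.

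Two small remarks. First, the regularity caveat you raise is genuine but harmless here: the curve $\psi$ is smooth, $U_c$ is smooth with exponential decay, and the map $U\mapsto J\delta A(U)$ is built from $x_{\min}$ (smooth on $V_\varepsilon$) and the fixed $Y\in (H^2)^2$, so the composition $s\mapsto J\delta A(\psi(s))$ is $C^1$ near $0$ and the differentiation is legitimate without further regularization. Second, in your fallback argument the justification you give for $\langle y,U_{-1}\rangle\neq 0$ (``because $\langle\delta P(U_c),U_c'\rangle=P'(c)>0$'') is not the right one; the correct reason is simply that $\langle\mathcal{L}y,y\rangle<0$ forces $y$ to have a nonzero component along the unique negative eigendirection $U_{-1}$ (otherwise $\langle\mathcal{L}y,y\rangle\geq 0$). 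With this correction the alternative route is also valid, but the direct differentiation is shorter and is what \cite{GSS} actually does.
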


\subparagraph{End of proof of theorem \ref{nonlininsta}} Since $Q$ changes sign, there exists 
$s$ such that $Q (\psi(s))>0$. Since 
$\lim_0\psi(s)=U_c$, $\psi(s)$ can be chosen arbitrarily close to $U_c$. From lemma \ref{instacurve}
$E(\psi(s))<E(U_c)$ and for $s$ small enough 
using \eqref{nonzeroP} we have $P(\psi(s))>0$. For $(\varphi_n)_{n\geq 0}$ a 
standard sequence of mollifiers, 
$$\|\varphi_n*\psi(s)-\psi(s)\|_{H^1\times L^2}\rightarrow 0,\ \varphi_n*\psi(s)\in H^3\times H^2.$$
For $n$ large we can assume $P(\varphi_n*\psi(s))\neq 0$, $E(\varphi_n*\psi(s))<E(U_c)$ and we define 
$$ U_n=\sqrt{\frac{P(\psi(s))}{P(\varphi_n*\psi(s))}}\varphi_n*\psi(s),$$ 
As $P(\psi(s))/P(\varphi_n*\psi(s))\longrightarrow_n1$, for $n$ large enough 
$E(U_n)<E(U_c)$ and by construction $P(U_n)=P(U_c)$. 
Let $U=(r,u)(t)$ the solution of \eqref{alterham} with initial data $U_n$. 
By conservation of $E$ and $P$ (see \cite{BDD}), and \eqref{ineqEner}, as long as $U(t)$ remains in $V_\varepsilon$ 
\begin{equation*}
E(U(t))=E(U_n)<E(U_c),\ E(U_c)<E(U(t))+\Lambda(U(t))Q(U(t)).
\end{equation*}
This implies $\Lambda>0$ and up to diminishing $\varepsilon$ we can assume $\Lambda\leq 1$, 
so that $Q(U(t))\geq E(U_c)-E(U_n)>0$. Then if $U(t)\in V_\varepsilon$,
\begin{equation*}
A(U(t))\leq \|Y\|_2(\|U_c\|_2+\varepsilon),\
\frac{d}{dt}A(U(t))=\langle J\delta E,\delta A\rangle =Q(U(t))\geq E(U_c)-E(U_n),
\end{equation*}
which can only remain true for a finite time. Thus $U(t)$ must exit $V_\varepsilon$ or blows up
before.
\bibliography{biblio}
\bibliographystyle{plain}

\end{document}